\newtheorem{theorem}{Theorem}[section]
\newtheorem{lemma}[theorem]{Lemma}
\newtheorem{proposition}[theorem]{Proposition}
\newtheorem{corollary}[theorem]{Corollary}
\newtheorem{claim}[theorem]{Claim}
\newtheorem{fact}[theorem]{Fact}
\theoremstyle{definition}
\newtheorem{definition}[theorem]{Definition}
\newtheorem{notation}[theorem]{Notation}
\newtheorem{example}[theorem]{Example}
\newtheorem{question}[theorem]{Question}
\newtheorem{remark}[theorem]{Remark}
\numberwithin{equation}{section}
\newcommand{\cat}[1]{\mathsf{#1}}
\newcommand{\Set}{\cat{Set}}
\newcommand{\BA}{\cat{BA}}
\newcommand{\Pos}{\cat{Pos}}
\newcommand{\ctx}{\cat{Ctx}}
\newcommand{\D}{\cat{D}}
\newcommand{\C}{\mathsf{C}}
\newcommand{\QFF}{\cat{QFF}}
\newcommand{\QA}{\cat{QA}}
\newcommand{\DoctBA}{\cat{Doct}_{\BA}}
\newcommand{\DoctABA}{\cat{Doct}_{\EA\BA}}
\newcommand{\LT}{\mathsf{LT}}
\newcommand{\N}{\mathbb{N}}
\renewcommand{\L}{\mathcal{L}}
\newcommand{\T}{\mathcal{T}}
\newcommand{\Hom}{\mathrm{Hom}}
\newcommand{\tmn}{\mathbf{1}}
\renewcommand{\P}{\mathbf{P}}
\newcommand{\R}{\mathbf{R}}
\renewcommand{\H}{\mathbf{H}}
\newcommand{\fa}[2]{(\forall{#1})_{#2}}
\newcommand{\ex}[2]{(\exists{#1})_{#2}}
\newcommand{\EA}{{\forall\mkern-2mu\exists}}
\newcommand{\ple}[1]{\langle#1\rangle}
\renewcommand{\S}{\mathcal{S}}
\newcommand{\B}{\mathcal{B}}
\newcommand{\Q}{\mathcal{Q}}
\newcommand{\m}{\mathfrak{m}}
\newcommand{\n}{\mathfrak{n}}
\DeclareMathOperator{\id}{id}
\DeclareMathOperator{\pr}{pr}
\newcommand{\op}{^{\mathrm{op}}}
\title[Quantifier-free formulas and quantifier alternation depth in doctrines]{Quantifier-free formulas and quantifier alternation depth\\ in doctrines}
\keywords{
Lawvere's hyperdoctrines,
Quantifier completion,
Nested quantifiers,
Quantifier-free fragment,
Quantifier alternation depth,
Algebraic logic,
Categorical logic,
First-order logic.
}
\subjclass[2020]{Primary: 03G30. Secondary: 03B10, 18C10, 08C15, 08B20}
\author{Marco Abbadini}
\address{University of Birmingham, School of Computer Science,
B15 2TT Birmingham (UK)}
\email{m.abbadini@bham.ac.uk}
\author{Francesca Guffanti}
\address{University of Luxembourg,
Department of Mathematics,
6, avenue de la Fonte,
L-4364 Esch-Sur-Alzette (Luxembourg)}
\email{francesca.guffanti@uni.lu}
\begin{document}

\begin{abstract}
    This paper aims to incorporate the notion of quantifier-free formulas modulo a first-order theory and the stratification of formulas by quantifier alternation depth modulo a first-order theory into the algebraic treatment of classical first-order logic.

    The set of quantifier-free formulas modulo a theory is axiomatized by what we call a \emph{quantifier-free fragment} of a Boolean doctrine with quantifiers. Rather than being an intrinsic notion, a quantifier-free fragment is an additional structure on a Boolean doctrine with quantifiers. 
    Under a smallness assumption, the structures occurring as quantifier-free fragments of some Boolean doctrine with quantifiers are precisely the Boolean doctrines (without quantifiers).
    In particular, every Boolean doctrine over a small category is a quantifier-free fragment of its quantifier completion.

    Furthermore, the sequences obtained by stratifying an algebra of formulas by quantifier alternation depth modulo a theory are axiomatized by what we call \emph{QA-stratified Boolean doctrines}.
    While quantifier-free fragments are defined in relation to an ``ambient'' Boolean doctrine with quantifiers, a QA-stratified Boolean doctrine requires no such ambient doctrine, and it consists of a sequence of Boolean doctrines (without quantifiers) with connecting axioms.
    QA-stratified Boolean doctrines are in one-to-one correspondence with pairs consisting of a Boolean doctrine with quantifiers and a quantifier-free fragment of it.
\end{abstract}

\maketitle

\setcounter{tocdepth}{3}
\makeatletter
\def\l@subsection{\@tocline{2}{0pt}{2.5pc}{5pc}{}}
\def\l@subsubsection{\@tocline{2}{0pt}{5pc}{7.5pc}{}}
\makeatother

\tableofcontents

\section{Introduction}

A common measure of the complexity of a first-order formula is its quantifier alternation depth, which counts the number of blocks of alternating existential and universal quantifiers. 
This gives a stratification $\mathcal{F}_0 \subseteq \mathcal{F}_1 \subseteq \mathcal{F}_2 \subseteq \dots$ of the set $\mathcal{F}$ of all first-order formulas, where $\mathcal{F}_n$ consists of the formulas whose quantifier alternation depth is at most $n$. 
For example, an atomic formula $R(x,y)$ is in $\mathcal{F}_0$, the formula $\forall x\, R(x,y)$ belongs to $\mathcal{F}_1$, and the formula $\exists y\, \forall x\, R(x,y)$ belongs to $\mathcal{F}_2$.

The main objective of this paper is to incorporate the stratification induced by the syntactic notion of quantifier alternation depth into the algebraic treatment of first-order logic.
Among various algebraizations of first-order logic, we choose to work with
Lawvere's hyperdoctrines \cite{Lawvere69,Lawvere70}, of categorical flavour.
In contrast to other algebraic approaches to first-order logic (such as polyadic \cite{Halmos1956} or cylindric algebras \cite{HenkinMonkEtAl1971}), formulas in hyperdoctrines are indexed by their set of free variables.
This allows to faithfully reflect the fact that quantifiers change the set of free variables.
To cover a range of different logical fragments and rules, many variations of Lawvere's original definition of hyperdoctrines were introduced.
Among these, we place ourselves in the setting of \emph{first-order Boolean doctrines}, which capture the Boolean setting with quantifiers.\footnote{Points of departure with Lawvere's original definition \cite{Lawvere69,Lawvere70} are, among others, the fact that we require the fibers to be Boolean algebras rather than Cartesian closed categories, that we do not require the presence of the equality predicate (which we treat separately in \cref{s:equality}), and that we do not require the base category to be Cartesian closed.}

Our source of inspiration has been M.~Gehrke's talk at the conference ``Category theory 20$\to$21'';
this work is our attempt to take up on her invitation \cite[minute 55]{GehrkeTalk}:
\begin{quote}
    ``I wish you would try to make some nice mathematics [...] or some nice category theory out of this decoupage of the formulas [= the stratification by quantifier depth] rather than just this [= the Lawverian stratification by contexts]. I mean, [the Lawverian way] is important, but [the stratification by quantifier depth] is very useful, technically.''
\end{quote}

\noindent (It is worth mentioning that in our work we consider the quantifier alternation depth rather than the notion of depth of nesting of quantifiers. 
To illustrate the difference, consider the formula $\forall x\, \forall y\, R(x, y)$: it has quantifier alternation depth $1$ but depth of nesting of quantifiers $2$. 
We take this approach because, in the Lawverian setting, there is no notion of ``quantification on exactly one variable''.)

Because of the practice in mathematics of working modulo a theory, rather than algebraizing the ``pure'' quantifier alternation depth, we provide a doctrinal approach to the quantifier alternation depth \emph{modulo a theory}.
In doing so, we face the issue that, in doctrines, any information about this depth is lost. Indeed, if two theories over possibly different languages have isomorphic doctrines of formulas, the isomorphism may fail to preserve quantifier-free
formulas. To solve this problem, instead of seeking an intrinsic notion of quantifier alternation depth, we shall add further structure to a first-order Boolean doctrine.

To this end, we give two definitions, which carry the same information: we define\ldots
\begin{enumerate}
    \item \ldots a \emph{quantifier-free fragment} of a first-order Boolean doctrine: this axiomatizes the subset $\mathcal{F}_0$ of quantifier-free formulas modulo $\T$ of a given set $\mathcal{F}$ of first-order formulas modulo a theory $\T$; 

    \item \ldots a \emph{QA-stratified Boolean doctrine}: it axiomatizes the stratification $\mathcal{F}_0\subseteq\mathcal{F}_1\subseteq\mathcal{F}_2\subseteq \dots$ of the set of formulas by quantifier-alternation depth modulo a theory. The ``ambient'' set of all formulas is not mentioned in the axiomatization, but can be obtained as the directed union of all the layers.
\end{enumerate}
The two notions define the objects of two categories, which we show to be equivalent (\cref{s:q-depth}).

Then, we turn to the question: if the structure on the set $\mathcal{F}$ of all first-order formulas modulo $\T$ is a first-order Boolean doctrine, what is the structure on the set $\mathcal{F}_0$ of quantifier-free formulas modulo $\T$?
We show that (under a smallness assumption) the structures occurring as quantifier-free fragments of a first-order Boolean doctrine are precisely the \emph{Boolean doctrines}, which are a version of first-order Boolean doctrines that does not require the existence of quantifiers (\cref{sec:char-qff}).

In future work, we aim to use the step-by-step approach based on the quantifier alternation depth of the previous sections to describe how to freely add quantifiers to a Boolean doctrine.
This should be similar to the description of free modal algebras and free Heyting algebras with the step-by-step approach based on the depth of nesting of modalities and implications \cite{Ghilardi1992,Ghilardi1995,BezhanishviliGehrke2011,Ghilardi2010,CoumansVanGool2013}.
To prepare the ground for this exploration (of which some first results are in \cite{AbbadiniGuffanti2}), in this paper we observe that any Boolean doctrine over a small category freely generates a first-order Boolean doctrine (its \emph{quantifier completion}), of which, moreover, it is a quantifier-free fragment (\cref{sec:quantifier compl}).

Finally, the setting with equality is discussed in \cref{s:equality}. A pleasant realization is that the situation does not change much: in particular, a quantifier completion of a Boolean doctrine with equality has equality.

To conclude, we sum up the main results of the paper:
\begin{enumerate}
    \item We provide a quasi-equational axiomatization of the algebras of first-order theories where the formulas are separated according to quantifier alternation depth and context, instead of only their context (\cref{d:only-layers,t:QA-makes-sense}).

    \item
    We show that every Boolean doctrine embeds into some first-order Boolean doctrine (\cref{t:bd-embeds-fo}), and, in particular, into its free completion under existential and universal quantifiers; this characterizes Boolean doctrines as the algebras of quantifier-free formulas (\cref{t:character-0th-layer,p:qff-of-completion}).

    \item
    We show that equality, when it exists, is preserved by quantifier completion (\cref{c:equality-only-p0}).
\end{enumerate}

Hopefully, this paper will be a first step for a doctrinal approach to quantifier-free formulas and the quantifier alternation depth modulo a theory.

\section{Preliminaries on doctrines}

Hyperdoctrines were introduced by F.~W.~Lawvere \cite{Lawvere69,Lawvere70} to interpret both syntax and semantics of first-order theories in the same categorical setting. 
In this paper, we consider Boolean doctrines and first-order Boolean doctrines, which are variations of Lawvere's hyperdoctrines; points of departure are, among others, the fact that we impose all the axioms of Boolean algebras and that we do not require the existence of the equality predicate. (We treat the equality predicate separately in \cref{s:equality} via the notion of \emph{elementarity}.)
Boolean doctrines contain enough structure to interpret all logical connectives, while first-order Boolean doctrines require further structure allowing to interpret also the quantifiers.
Lawvere's fundamental intuition was that quantifiers in logic are interpreted as certain adjoints.

Lawvere's doctrinal setting is amenable to a number of generalizations different from ours; for the interested reader we mention primary doctrines (where one can interpret $\top$ and $\land$) \cite{MaiettiRosolini13b,Pasquali15,EmPaRo20}, existential doctrines ($\top$, $\land$, $\exists$) \cite{MaiettiRosolini13b}, universal doctrines ($\top$, $\land$, $\forall$) \cite{Pasquali15}, elementary doctrines ($\top$, $\land$, $=$) \cite{MaiettiRosolini13,EmPaRo20} and first-order doctrines (all logical connectives with the axioms of Heyting algebras and quantifiers) \cite{EmPaRo20}.

\begin{notation}
    We let $\BA$ denote the category of Boolean algebras and Boolean homomorphisms, and $\Pos$ the category of partially ordered sets and order-preserving functions.
\end{notation}

\begin{definition}[Boolean doctrine]
    A \emph{Boolean doctrine} over a category $\C$ with finite products is a functor $\P \colon \C\op \to  \BA$.
    The category $\C$ is called the \emph{base category of $\P$}.
    For each $X\in\C$, $\P(X)$ is called a \emph{fiber}. For each morphism $f\colon X'\to X$, the function $\P(f)\colon\P(X)\to\P(X')$ is called the \emph{reindexing along $f$}.
\end{definition}

\begin{definition}[Boolean doctrine morphism]
     A \emph{Boolean doctrine morphism} from $\P\colon\C\op\to \BA$ to $\mathbf{R}\colon \cat{D}\op\to \BA$ is a pair $(M,\m)$ where $M\colon \C\to\cat{D}$ is a functor that preserves finite products and $\m \colon \P\to \mathbf{R}\circ M\op $ is a natural transformation.
    \[
        \begin{tikzcd}
            \C\op && {\cat{D}\op} \\
            \\
            & \BA
            \arrow["M\op", from=1-1, to=1-3]
            \arrow[""{name=0, anchor=center, inner sep=0}, "\P"', from=1-1, to=3-2]
            \arrow[""{name=1, anchor=center, inner sep=0}, "\R", from=1-3, to=3-2]
            \arrow["\m", curve={height=-6pt}, shorten <=8pt, shorten >=8pt, from=0, to=1]
        \end{tikzcd}
    \]

    The composite Given Boolean doctrine morphisms $(M, \m) \colon \P\to\mathbf{R}$ and $(N,\mathfrak n) \colon \R \to \mathbf{S}$,
    \[
        \begin{tikzcd}
    	\C\op && {\cat{D}\op} && {\cat{E}\op} \\
    	\\
    	&& \BA
    	\arrow["M\op", from=1-1, to=1-3]
    	\arrow[""{name=0, anchor=center, inner sep=0}, "\P"', from=1-1, to=3-3]
    	\arrow[""{name=1, anchor=center, inner sep=0}, "\R", from=1-3, to=3-3]
    	\arrow["N\op", from=1-3, to=1-5]
    	\arrow[""{name=2, anchor=center, inner sep=0}, "{\mathbf{S}}", from=1-5, to=3-3]
    	\arrow["\m", curve={height=-6pt}, shorten <=8pt, shorten >=8pt, from=0, to=1]
    	\arrow["\n", curve={height=-6pt}, shorten <=8pt, shorten >=8pt, from=1, to=2]
        \end{tikzcd}
    \]
    their composite $(N, \n) \circ (M, \m) \colon \P \to \mathbf{S}$ is the pair $(N \circ M, \mathfrak{n}\circ\m) \colon \P \to \mathbf{S}$, where $N \circ M$ is the composite of the functors between the base categories, and the component at $X\in \C$ of the natural transformation $\mathfrak{n}\circ\m$ is defined as $(\mathfrak{n}\circ\m)_X = \mathfrak{n}_{M(X)}\circ\m_X$, i.e.\ the composite of the following functions:
    \[
    \P(X)\xrightarrow{\m_X}\mathbf{R}(M(X))\xrightarrow{\mathfrak{n}_{M(X)}}\mathbf{S}(NM(X)).
    \]
\end{definition}

Although 2-categorical aspects of doctrines would be very natural, we omit them for simplicity.

\begin{definition}[First-order Boolean doctrine]\label{d:bool_ex_doc}
    A \emph{first-order Boolean doctrine} over a category $\C$ with finite products is a functor $\P \colon \C\op \to  \BA$ with the following properties.
    \begin{enumerate}
        \item {(Universal)} \label{i:h3}
        For all $X, Y \in \C$, letting $\pr_1 \colon X \times Y \to X$ denote the projection onto the first coordinate, the function
        \[
        \P(\pr_1) \colon \P(X) \to \P(X \times Y)
        \]
        has a right adjoint $\fa{Y}{X}$ (as an order-preserving map between posets).
        This means that for every $\beta \in \P(X \times Y)$ there is a (necessarily unique) element $\fa{Y}{X} \beta \in \P(X)$ such that, for every $\alpha \in \P(X)$, 
        \[
            \alpha \leq \fa{Y}{X} \beta \ \text{ in } \P(X) \quad\iff \quad \P(\pr_1)(\alpha) \leq \beta \ \text{ in }\P(X \times Y).
        \] 
        
        \item
        (Beck-Chevalley) For every morphism $f\colon X'\to X$ in $\C$, the following diagram in $\Pos$ commutes. 
        \[
            \begin{tikzcd}
            {X} & {\P(X\times Y)} & {\P(X)} \\
            X' & {\P(X'\times Y)} & {\P(X')}
            \arrow["{\P(f\times\id_{Y})}", from=1-2, to=2-2, swap]
            \arrow["{\P(f)}", from=1-3, to=2-3]
            \arrow["{\fa{Y}{X'}}"', from=2-2, to=2-3]
            \arrow["{\fa{Y}{X}}", from=1-2, to=1-3]
            \arrow["f", from=2-1, to=1-1]
            \end{tikzcd}
        \]
    \end{enumerate}
\end{definition}

\begin{definition}[First-order Boolean doctrine morphism] \label{d:ABA-morphism}
     A \emph{first-order Boolean doctrine morphism} from $\P\colon\C\op\to \BA$ to $\mathbf{R} \colon \cat{D}\op\to \BA$ is a Boolean doctrine morphism $(M,\m)\colon \P\to\R$ such that for every $X,Y\in\C$ the following diagram commutes.
     \begin{equation*}
         \begin{tikzcd}
             \P(X\times Y)\arrow[d,"\fa{Y}{X}"']\arrow[r,"\m_{X\times Y}"] & \R(M(X)\times M(Y))\arrow[d,"\fa{M(Y)}{M(X)}"]\\
             \P(X)\arrow[r,"\m_{X}"'] & \R(M(X))
         \end{tikzcd}
     \end{equation*}
\end{definition}

\begin{remark}[Universality is equivalent to existentiality]
    In \cref{d:bool_ex_doc}\eqref{i:h3}, we required the existence of the right adjoint $\fa{Y}{X}$ of $\P(\pr_1)$.
    Alternatively, one may ask for the existence of a left adjoint $\ex{Y}{X}$ (again with the Beck-Chevalley condition), a property called \emph{existentiality}.
    In this Boolean case, existentiality and universality are equivalent, since the two quantifiers are interdefinable: $\forall = \lnot \exists \lnot$ and $\exists = \lnot \forall \lnot$.
\end{remark}
    
Next, we describe the leading example: the first-order Boolean doctrine that describes a first-order theory.

\begin{example}[Syntactic doctrine]\label{fbf}
    Fix a first-order language $\L = (\mathbb{F},\mathbb{P})$ (without equality) and a theory $\T$ in the language $\L$.
    We define a first-order Boolean doctrine 
    \[
    \LT^{\L,\T} \colon \ctx \op\to \BA,
    \]
    called the \emph{syntactic doctrine of ($\L$ and) $\T$},
    as follows. ($\LT$ stands for ``Lindenbaum-Tarski algebra''.)
    An object of the base category $\ctx$ is a finite list of distinct variables (also called \emph{context}), and a morphism from $\vec x=(x_1,\dots, x_n)$ to $\vec y=(y_1,\dots, y_m)$ is an $m$-tuple
    \begin{equation*}
        (t_1(\vec x),\dots,t_m(\vec x))\colon (x_1,\dots, x_n) \to (y_1,\dots, y_m)
    \end{equation*}
    of terms in the context $\vec x$. The composition of morphisms in $\ctx$ is given by simultaneous substitutions. We remark that the category $\ctx$ depends only on the set $\mathbb{F}$ of function symbols of the language $\L$.
    On objects, the functor $\LT^{\L,\T} \colon \ctx\op\to \BA$ maps a context $\vec x$ to the poset reflection of the preordered set of formulas with at most those free variables, ordered by provable consequence $\vdash_\T$ in $\T$, according to which $\alpha$ is below $\beta$ if and only if the sequent $\alpha \Rightarrow_{\vec x} \beta$ is provable from $\T$; here, the subscript $\vec x$ in the sequent symbol $\Rightarrow$ means that the sequent is considered in the context $\vec x$.
    Although this is folklore, we recall in \cref{app:calculus} the rules of the sequent calculus with contexts for classical first-order logic. (A tangible consequence of the slight difference with the more usual logic \emph{without} contexts is that the sequent $\Rightarrow_{()} \exists x \top$ is not provable, in accordance with admitting the empty set as a possible model.)
    On morphisms, the functor $\LT^{\L,\T}$ maps a morphism $\vec{t}(\vec{x}) \colon \vec{x}\to\vec{y}$ to the substitution $[\vec{t}(\vec{x})/\vec{y}] \colon \LT^{\L,\T}(\vec y) \to \LT^{\L,\T}(\vec x)$. 
    
    The functor $\LT^{\L,\T}$ is a first-order Boolean doctrine.
    In particular, given finite lists of variables $\vec x$ and $\vec y$ with no common variables, and letting $\pr_1$ denote the projection morphism $\vec x\colon(\vec x,\vec y)\to\vec x$, the right adjoint to $\LT^{\L,\T}(\pr_1)\colon\LT^{\L,\T}(\vec x)\to\LT^{\L,\T}(\vec x,\vec y)$ is 
    \[
    \forall y_1\,\dots\,\forall y_m\colon\LT^{\L,\T}(\vec x,\vec y)\to\LT^{\L,\T}(\vec x).
    \]
    
    If no confusion arises, instead of $\LT^{\L,\T}$ we simply write $\LT^{\T}$, omitting the superscript ``$\L$''.
\end{example}

With this example in mind, we suggest thinking of the objects of the base category of a first-order Boolean doctrine as lists of variables, the morphisms as terms, the fibers as sets of formulas, the reindexings as substitutions, the Boolean operations as logical connectives, and the adjunctions between fibers as quantifiers.

\begin{remark}
    The setting of first-order Boolean doctrines encompasses in a similar way also \emph{many-sorted} first-order theories.
\end{remark}

\begin{example}[Subset doctrine]
    The \emph{subset doctrine}\footnote{The subset doctrine is useful for encoding models in the doctrinal setting.
    Indeed, a model of a first-order theory $\T$ (in the classical sense) corresponds precisely to a first-order Boolean doctrine morphism $(M, \m)$ from the syntactic doctrine $\LT^\T$ to the subset doctrine $\mathscr{P}$.
    The assignment of the functor $M$ on objects encodes the underlying set of the model, the assignment of $M$ on morphisms encodes the interpretation of the function symbols, and the natural transformation $\m$ encodes the interpretation of the relation symbols.} is the contravariant power set functor $\mathscr{P} \colon \Set\op \to \BA$, which maps a set $X$ to its power set Boolean algebra $\mathscr{P}(X)$, and $f\colon X'\to X$ to the preimage function
    \[
    \mathscr{P}(f) \coloneqq f^{-1}[-] \colon \mathscr{P}(X) \to \mathscr{P}(X').
    \]
    This is a first-order Boolean doctrine. Given two sets $X$ and $Y$, the right adjoint $\fa{Y}{X}$ to the order-preserving function $\pr_1^{-1}[-]\colon\mathscr{P}(X)\to\mathscr{P}(X\times Y)$ is the function
    \begin{align*}
        \fa{Y}{X} \colon \mathscr{P}(X\times Y) & \longrightarrow \mathscr{P}(X)\\
        S & \longmapsto \{x \in X \mid \text{for all }y \in Y, \, (x, y) \in S\}.
    \end{align*}
    In this case, the left adjoint $\ex{Y}{X}$ is easier to describe, and is the direct image function of $\pr_1 \colon X \times Y \to X$.
\end{example}

\subsection{Doctrines as many-sorted algebras}

In the following, we present Boolean doctrines and first-order Boolean doctrines over a fixed small category as many-sorted algebras forming a class defined by equations.
While most of the general references on universal algebra treat the case of a single sort, most if not all of the elementary theory generalizes to multiple sorts; we refer to \cite{Higgins1963}, \cite{BirkhoffLipson1970}, \cite[Sec.~4.1.1]{Wechler1992}, \cite[Ch.~14]{AdamekRosickyEtAl2011} and \cite{Tarlecki2011}.

\begin{definition}
    Let $\C$ be a category with finite products.
    \begin{enumerate}
        \item 
        We let $\DoctBA^\C$ denote the category of Boolean doctrines over $\C$ and natural transformations.
        
        \item 
        We let $\DoctABA^\C$ denote the category whose objects are first-order Boolean doctrines over $\C$ and whose morphisms from $\P \colon \C\op \to \BA$ to $\R \colon \C\op \to \BA$ are natural transformations $\m \colon \P \to \R$ such that for all $X, Y \in \C$ the following diagram commutes.
        \begin{equation} \label{eq:commutative-algebras-diagram}
            \begin{tikzcd}
             \P(X \times Y) \arrow[d,"\fa{Y}{X}"']\arrow[r,"\m_{X\times Y}"] & \R(X \times Y) \arrow[d,"\fa{Y}{X}"]\\
             \P(X)\arrow[r,"\m_{X}"'] & \R(X)
            \end{tikzcd}
        \end{equation}
    \end{enumerate}
\end{definition}

\begin{remark}[Boolean doctrines and first-order Boolean doctrines as many-sorted algebras] \label{r:many-sorted}
    Let $\C$ be a small\footnote{Although smallness is irrelevant for many algebraic arguments, it guarantees the existence of free algebras (used in \cref{t:left-adjoint}).} category with finite products.
    We present $\DoctABA^\C$ as a variety of many-sorted algebras.
    
    First, we describe a many-sorted algebraic language $\L_\C$.
    The sorts are the objects of $\C$.
    We equip each sort with the signature of a Boolean algebra.
    Moreover, for each morphism $f \colon X \to Y$ in $\C$, we consider a unary function symbol $f$ from sort $Y$ to sort $X$.
    Finally, for each binary product diagram $X \xleftarrow{\pr_1} Z \xrightarrow{\pr_2} Y$, we consider a unary function symbol $\fa{Y}{X}^{\pr_1,\pr_2}$ from sort $Z$ to sort $X$.
    
    Next, we let $\mathcal{V}$ be the class of many-sorted algebras $\P$ in the language $\L_\C$ satisfying the following equational axioms, where for each object $X$ we write $\P_X$ for the value of $\P$ at the sort $X$, and for each morphism $f \colon X \to Y$ we write $\P_f$ for the interpretation of the function symbol $f$ in $\P$.
    \begin{enumerate}
        \item \label{i:sort-is-bool}
        Each sort satisfies the axioms of a Boolean algebra.
        
        \item \label{i:maps-are-homo}
        For each morphism $f \colon X \to Y$, the function symbol $\P_f$ satisfies the axioms of a Boolean homomorphism, i.e.\ for each Boolean function symbol $g(x_1, \dots, x_n)$, we have the axiom
        \[
            \text{For all }\alpha_1, \dots, \alpha_n \in \P_Y, \ \P_f(g_{\P_Y}(\alpha_1, \dots, \alpha_n)) = g_{\P_X}(\P_f(\alpha_1), \dots, \P_f(\alpha_n)).
        \]
        
        \item \label{i:id-preserved}
        For each object $X$, we have the axiom 
        \[
        \text{For all } \alpha \in \P_X, \  \P_{\id_X}(\alpha) = \alpha.
        \]
        
        \item \label{i:comp-preserved}
        Given two morphisms $X \xrightarrow{f} Y \xrightarrow{g} Z$, we have the axiom
        \[
            \text{For all } \alpha \in \P_Z,\  \P_f (\P_g(\alpha)) = \P_{g \circ f} (\alpha).
        \]
        
        \item \label{i:universal-adjoint}
        For each binary product diagram $X \xleftarrow{\pr_1} Z \xrightarrow{\pr_2} Y$, we have the axiom
        \[
            \text{For all } \alpha \in \P_X \text{ and }\beta \in \P_Z,\ \alpha \leq \fa{Y}{X}^{\pr_1,\pr_2} (\beta) \iff \P_{\pr_1}(\alpha) \leq \beta.
        \]
        
        \item \label{i:universal-BC}
        For all objects $X, X', Y, Z, Z'$, for every morphism $f \colon X' \to X$, and for all binary product diagrams $X \xleftarrow{\pr_1}Z \xrightarrow{\pr_2} Y$ and $X' \xleftarrow{\pr_1'}Z' \xrightarrow{\pr_2'} Y$, we have the axiom
        \[
            \text{For all }\alpha \in \P_{Z},\ \fa{Y}{X'}^{\pr_1',\pr_2'} (\P_{f \times \id_Y}(\alpha)) = \P_f(\fa{Y}{X}^{\pr_1,\pr_2}(\alpha)).
        \]
    \end{enumerate}

    We observe that the axiom in \eqref{i:universal-adjoint} can be expressed by equations.
    Indeed, for any pair $(L, R)$, where $L \colon P \to Q$ and $R \colon Q \to P$ are order-preserving functions between posets, the adjunction condition
    \[
    L(a) \leq b \iff a \leq R(b)
    \]
    is equivalent to the unit-counit formulation
    \[
    a\leq RL(a)\quad\text{ and }\quad LR(b)\leq b.
    \]
    Therefore, the condition in \eqref{i:universal-adjoint} can be expressed equivalently via the following equations.
    \begin{enumerate}
        \item (Order-preservation)
        For all $\beta,\beta' \in \P_Z$, $\fa{Y}{X}^{\pr_1,\pr_2}(\beta \land \beta') \leq \fa{Y}{X}^{\pr_1,\pr_2}(\beta)$.
        \item (Unit) For all $\alpha \in \P_X$, 
        $\alpha \leq \fa{Y}{X}^{\pr_1,\pr_2}(\P_{\pr_1}(\alpha))$.

        \item (Counit) For all $\beta \in \P_Z$, 
        $\P_{\pr_1}(\fa{Y}{X}^{\pr_1,\pr_2}(\beta))\leq\beta$.
    \end{enumerate}
    (Of course, every inequality can be expressed equationally since $a\leq b$ is equivalent to $a\land b =a$.)
    
    A many-sorted algebra in this signature satisfying the axioms above is the same thing as a first-order Boolean doctrine over $\C$.
    Indeed,
    \begin{itemize}
        \item 
        \eqref{i:sort-is-bool} guarantees that we have an assignment on objects from $\C$ to $\BA$.
        \item
        \eqref{i:maps-are-homo} guarantees that we have an assignment on morphisms from $\C$ to $\BA$.
        \item
        \eqref{i:id-preserved} guarantees that the identity is preserved, and \eqref{i:comp-preserved} guarantees that the composition is preserved, so that we have a functor $\C\op \to \BA$.
        \item
        \eqref{i:universal-adjoint} guarantees that the universal quantifier is a right adjoint.
        \item
        \eqref{i:universal-BC} guarantees that the Beck-Chevalley condition is satisfied.
    \end{itemize}
    
    A homomorphism $\m$ of many-sorted algebras in $\mathcal{V}$ is the same thing as a morphism in $\DoctABA^\C$.
    Indeed,
    \begin{itemize}
        \item The preservation of the Boolean function symbols guarantees that $\m_X$ is a Boolean homomorphism for each $X \in \C$.
        \item The preservation of the unary function symbols associated to the morphisms of $\C$ guarantees the naturality of $\m$.
        \item The preservation of the unary function symbols $\fa{Y}{X}^{\pr_1,\pr_2}$ guarantees the commutativity of \eqref{eq:commutative-algebras-diagram}.
    \end{itemize}

    Similarly, $\DoctBA^\C$ is a variety of many-sorted algebras in the sublanguage of $\L_\C$ consisting of all its function symbols except quantifiers.
\end{remark}

\subsection{The universal property of the syntactic doctrine} In the following, we state the universal property of the syntactic doctrine $\LT^{\T}$; essentially, this amounts to the fact that a morphism in $\DoctABA^\ctx$ with domain $\LT^{\T}$ is uniquely determined by its assignment on the equivalence class of atomic formulas.

\begin{remark}[Interpretation of formulas]\label{r:int-fmlas}
    Let $\L=(\mathbb{F},\bigcup_{n\in\N}\mathbb{P}_n)$ be a first-order language, where $\mathbb{P}_n$ denotes the set of $n$-ary predicate symbols. Let $\ctx$ be the category of contexts for the functional language $\mathbb{F}$ (defined as in \cref{fbf}). We consider an enumeration $x_1, x_2, \dots$ of the variables. Let $\P\colon\ctx\op\to\BA$ be a first-order Boolean doctrine and let $(\mathbb{I}_n\colon\mathbb{P}_n\to\P(x_1,\dots,x_n))_{n\in\N}$ be a family of functions.
    We define inductively on the complexity of the formulas a function $\mathcal{I}$ from the set 
    \[
    \mathrm{Form}(\L) \coloneqq \{ \alpha : \vec x\mid \vec x \in \ctx, \alpha \text{ first-order formula s.t } \mathrm{FV}(\alpha)\subseteq\{x_1,\dots,x_n\}\}
    \]
    of $\L$-formulas in context---where $\mathrm{FV}(\alpha)$ is the set of free variables of the formula $\alpha$---to $\bigcup_{\vec x \in\ctx}\P(\vec x)$ as follows (we let $n$ and $m$ be the lengths of the contexts $\vec x$ and $\vec y$ respectively):
    \begin{enumerate}
        \item\label{i:int-subs} 
        for all $S\in\mathbb{P}_m$ and for all morphisms $\vec{t}\colon\vec x \to\vec y$ in $\ctx$, we define
        \[\mathcal{I}(S(\vec{t}(\vec x)):\vec x)\coloneqq \P(\vec t)(\mathbb{I}_m(S))\in\P(\vec x);\]
        \item\label{i:int-bool} 
        for all $*\in\{\land,\lor\}$, for all $\alpha:\vec x$ and $\beta:\vec x$ we define 
        \[\mathcal{I}(\alpha*\beta:\vec x)\coloneqq \mathcal{I}(\alpha:\vec x)*\mathcal{I}(\beta:\vec x)\in\P(\vec x),\]
        for every $\alpha:\vec x$ we define 
        \[\mathcal{I}(\lnot\alpha:\vec x)\coloneqq \lnot\mathcal{I}(\alpha:\vec x)\in\P(\vec x),\]
        and we further define 
        \[\mathcal{I}(\top:\vec x)\coloneqq \top_{\P(\vec x)}\in\P(\vec x) \quad \text{ and } \quad \mathcal{I}(\bot:\vec x)\coloneqq \bot_{\P(\vec x)}\in\P(\vec x);\]
        \item\label{i:int-fa} for every $\alpha:(\vec  x, z)$ we define 
        \begin{align*}
        \mathcal{I}(\forall z\,\alpha:\vec x)\coloneqq \fa{(z)}{\vec x}\,\mathcal{I}(\alpha:(\vec x, z))\in\P(\vec x) \quad \text{ and } \quad
        \mathcal{I}(\exists z\,\alpha:\vec x)\coloneqq \ex{(z)}{\vec x}\,\mathcal{I}(\alpha:(\vec x, z))\in\P(\vec x).
        \end{align*}
    \end{enumerate}
    
    Roughly speaking, we ``extended'' the interpretation in $\P$ of the predicate symbols of $\L$ to the interpretation in $\P$ of every formula.

    Additionally, it can be proved by induction on the complexity of the formulas that for every $\alpha:\vec y$ and for every morphism $\vec{t}\colon\vec x \to\vec y$ in $\ctx$ we have
    \begin{equation}\label{eq:I-nat}
        \mathcal{I}(\alpha[\vec t(\vec x)/\vec y]:\vec x)=\P(\vec t)(\mathcal{I}(\alpha:\vec y)).
    \end{equation}
\end{remark}

\begin{theorem}[Universal property of $\LT^{\T}$]\label{p:univ-prop-ltlt}
    Let $\T$ be a first-order theory in a language $(\mathbb{F},\bigcup_{n\in\N}\mathbb{P}_n)$, with $\mathbb{P}_n$ the set of $n$-ary predicate symbols, and $\P\colon\ctx\op\to\BA$ a first-order Boolean doctrine, where $\ctx$ is the category of contexts for the functional language $\mathbb{F}$.
    The set of first-order Boolean doctrine morphisms of the form $(\id_\ctx,\mathfrak{i})\colon\LT^{\T}\to\P$ is in bijection with the set of families of functions $(\mathbb{I}_n\colon\mathbb{P}_n\to\P(x_1,\dots,x_n))_{n\in\N}$ such that $\mathcal{I}(\alpha:())=\top_{\P()}$ for all $\alpha\in\T$, where $\mathcal{I}$ is the extension of $(\mathbb{I}_n)_{n}$ as in \cref{r:int-fmlas}. 
    The bijection assigns to $(\id_\ctx,\mathfrak{i})\colon\LT^{\T}\to\P$ the family $(\mathbb{I}_n\colon\mathbb{P}_n\to\P(\vec x),\, R \mapsto \mathfrak{i}_{\vec x}([R(\vec x):\vec x]_\T))_{n}$, and to $(\mathbb{I}_n)_n$ the morphism $(\id_\ctx,\mathfrak{i})\colon\LT^{\T}\to\P$ with
    \begin{align*}
        \mathfrak{i}_{\vec x}\colon\LT^{\T}(\vec x)&\longrightarrow\P(\vec x)\\
    [\alpha:\vec x]_\T&\longmapsto\mathcal{I}(\alpha:\vec x).
    \end{align*}  
\end{theorem}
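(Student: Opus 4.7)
The plan is to exhibit two mutually inverse assignments between the set of morphisms $(\id_\ctx,\mathfrak{i})\colon\LT^{\T}\to\P$ and the set of families $(\mathbb{I}_n)_{n\in\N}$ satisfying the theory condition, using \cref{r:int-fmlas} as the central tool. The forward assignment is immediate: given $(\id_\ctx,\mathfrak{i})$, set $\mathbb{I}_n(R)\coloneqq \mathfrak{i}_{(x_1,\dots,x_n)}([R(x_1,\dots,x_n):(x_1,\dots,x_n)]_\T)$. The theory condition holds because, for $\alpha\in\T$, the class $[\alpha:()]_\T$ equals $\top_{\LT^{\T}()}$ (by definition of the preorder $\vdash_\T$), and Boolean homomorphisms preserve $\top$; combined with the inductive argument of the backward direction (see below), this forces $\mathcal{I}(\alpha:())=\mathfrak{i}_{()}(\top)=\top_{\P()}$.

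For the backward assignment, given $(\mathbb{I}_n)_n$ satisfying the theory condition, I would define $\mathfrak{i}_{\vec x}([\alpha:\vec x]_\T)\coloneqq \mathcal{I}(\alpha:\vec x)$, with $\mathcal{I}$ as in \cref{r:int-fmlas}. The key point is well-definedness: whenever $\alpha\vdash_\T\beta$ in context $\vec x$, we need $\mathcal{I}(\alpha:\vec x)\leq \mathcal{I}(\beta:\vec x)$ in $\P(\vec x)$. This is a soundness statement, which I would prove by structural induction on derivations in the sequent calculus with contexts (recalled in the appendix referenced as \cref{app:calculus}). The axiom $\alpha\Rightarrow_{\vec x}\alpha$ is immediate; the Boolean connective rules use that each $\P(\vec x)$ is a Boolean algebra; the substitution/structural rules use the naturality equation~\eqref{eq:I-nat}; the quantifier rules use the adjunctions $\LT^{\T}(\pr_1)\dashv\fa{Y}{X}$ together with the Beck--Chevalley condition; and the axioms of $\T$ are handled by the hypothesis $\mathcal{I}(\alpha:())=\top_{\P()}$, propagated to arbitrary contexts by weakening (equivalently, by reindexing along projections, which preserves $\top$).

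Once $\mathfrak{i}_{\vec x}$ is well-defined, the remaining verifications are built into the definition of $\mathcal{I}$: clause~\eqref{i:int-bool} makes each $\mathfrak{i}_{\vec x}$ a Boolean homomorphism; equation~\eqref{eq:I-nat} is exactly naturality of $\mathfrak{i}$ with respect to $(\id_\ctx,-)$; and clause~\eqref{i:int-fa} gives the commutativity with $\fa{Y}{X}$ required by \cref{d:ABA-morphism}. Hence $(\id_\ctx,\mathfrak{i})$ is a first-order Boolean doctrine morphism.

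Finally, I would check that the two assignments are mutually inverse. Starting from $(\mathbb{I}_n)_n$, passing to $\mathfrak{i}$ and then restricting to atomic formulas returns $\mathbb{I}_n$ by clause~\eqref{i:int-subs} applied to the identity substitution. Conversely, starting from $(\id_\ctx,\mathfrak{i})$, the induced family extends to a morphism $\mathfrak{i}'$ that agrees with $\mathfrak{i}$ on (classes of) atomic formulas; a straightforward induction on the complexity of $\alpha$, using that both $\mathfrak{i}$ and $\mathfrak{i}'$ are natural Boolean homomorphisms commuting with $\forall$ and $\exists$, shows $\mathfrak{i}_{\vec x}([\alpha:\vec x]_\T)=\mathfrak{i}'_{\vec x}([\alpha:\vec x]_\T)$ for every $\alpha$ and $\vec x$. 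The main obstacle in the whole argument is the soundness induction for well-definedness, which is routine but must be carried out rule-by-rule against the chosen sequent calculus with contexts.
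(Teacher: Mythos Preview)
Your proposal is correct and follows essentially the same approach as the paper: the paper also defines the two assignments exactly as you do, establishes well-definedness of the backward map via a soundness lemma (\cref{l:rules-calc}) proved by induction on derivations in the sequent calculus of \cref{app:calculus}, and then invokes clauses \eqref{i:int-bool}, \eqref{i:int-fa} and equation~\eqref{eq:I-nat} for the morphism properties and the same complexity induction for mutual inversion. The only cosmetic difference is that the paper packages the rule-by-rule soundness check into a separate lemma rather than inlining it.
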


\begin{proof}
    Let $(\id_\ctx,\mathfrak{i})\colon\LT^{\T}\to\P$ be a first-order Boolean doctrine morphism; define for every $n\in\N$ the function $\mathbb{I}_n\colon\mathbb{P}_n\to\P(\vec x),\, R \mapsto \mathfrak{i}_{\vec x}([R(\vec x):\vec x]_\T)$ and let $\mathcal{I}$ be its extension. It can be easily shown by induction on the complexity of the formulas that $\mathcal{I}(\alpha:\vec x)=\mathfrak{i}_{\vec x}([\alpha:\vec x]_\T)$ for every $\alpha:\vec x\in\mathrm{Form(\L)}$, where $\L$ is the language $(\mathbb{F},\bigcup_{n\in\N}\mathbb{P}_n)$. In particular, for every $\alpha\in\T$ we have $\mathcal{I}(\alpha:())=\mathfrak{i}_{()}([\alpha:()]_\T)=\mathfrak{i}_{()}([\top:()]_\T)=\top_{\P()}$.
    
    We now prove the well-definedness of the converse assignment. Let $(\mathbb{I}_n)_n$ be a family and $\mathcal{I}$ its extension. For each $\vec x\in\ctx$, we set $\mathfrak{i}_{\vec x}\colon\LT^{\T}(\vec x)\to\P(\vec x),
    [\alpha:\vec x]_\T\mapsto\mathcal{I}(\alpha:\vec x)$. This function is well-defined. Indeed,
    suppose $[\alpha:\vec x]_\T\leq[\beta:\vec x]_\T$. There is a proof tree of the sequent $\alpha\Rightarrow_{\vec x}\beta$ whose non-axiom leaves are sequents of the form $\Rightarrow_{()}\varphi$ with $\varphi\in\T$, which are valid in $(\P,\mathcal{I})$ (in the sense of \cref{d:valid-sequent}) since $\mathcal{I}(\varphi:())=\top_{\P_{()}}$. By \cref{l:rules-calc}, 
    every sequent in the tree is valid in $(\P,\mathcal{I})$, and in particular the root, which means that $\mathcal{I}(\alpha:\vec x)\leq\mathcal{I}(\beta:\vec x)$ in $\P(\vec x)$.
    Moreover, every component of $\mathfrak{i}$ is a Boolean homomorphism (by \cref{r:int-fmlas}\eqref{i:int-bool}). By eq.~\eqref{eq:I-nat}, it follows that $\mathfrak{i}\colon\LT^{\T}\to\P$ is a natural transformation.
    To conclude, by \cref{r:int-fmlas}\eqref{i:int-fa}, $\mathfrak{i}$ preserves quantifiers, and thus $(\id_\ctx,\mathfrak{i})$ is a first-order Boolean doctrine morphism.

    Next, we show that the two assignments are mutual inverses. 
    Let $(\id_\ctx,\mathfrak{i})\colon\LT^{\T}\to\P$ be a first-order Boolean doctrine morphism. We define for every $n\in\N$ the function $\mathbb{I}_n\colon\mathbb{P}_n\to\P(\vec x),\, R \mapsto \mathfrak{i}_{\vec x}([R(\vec x):\vec x]_\T)$. Next, let $\mathcal{I}$ be the extension of $(\mathbb{I}_n)_n$, and then let $\mathfrak{j}_{\vec x}\colon\LT^{\T}(\vec x)\to\P(\vec x),\, [\alpha:\vec x]_\T\mapsto\mathcal{I}(\alpha:\vec x)$ be the corresponding natural transformation.
    As mentioned in the first paragraph of the proof, by induction on the complexity of the formulas we get $\mathcal{I}(\alpha:\vec x)=\mathfrak{i}_{\vec x}([\alpha:\vec x]_\T)$, so that $\mathfrak{j}=\mathfrak{i}$, as desired.
    Conversely, let $(\mathbb{I}_n)_n$ be a family, let $\mathcal{I}$ be its extension, set $\mathfrak{i}_{\vec x}\colon\LT^{\T}(\vec x)\to\P(\vec x),
    [\alpha:\vec x]_\T\mapsto\mathcal{I}(\alpha:\vec x)$ and set $\mathbb{J}_n\colon\mathbb{P}_n\to\P(\vec x),\, R \mapsto \mathfrak{i}_{\vec x}([R(\vec x):\vec x]_\T)=\mathcal{I}(R(\vec x):\vec x).$ By \eqref{i:int-subs} in the definition of $\mathcal{I}$, $\mathcal{I}(R(\vec x):\vec x)=\mathbb{I}_n(R)$, so that $\mathbb{J}_n=\mathbb{I}_n$, as desired.
\end{proof}

\begin{proposition}\label{p:quotient-extension}
Let $\T$ be a first-order theory. Quotients of $\LT^{\T}$ in $\DoctABA^{\ctx}$ are in bijection with theories extending $\T$ and closed under deduction.
\end{proposition}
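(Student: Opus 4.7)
The plan is to exploit the universal property of $\LT^{\T}$ (\cref{p:univ-prop-ltlt}) together with the Boolean character of each fiber. A quotient is represented by a componentwise surjective morphism $(\id_\ctx, \mathfrak{q}) \colon \LT^{\T} \twoheadrightarrow \P$ in $\DoctABA^\ctx$; I would construct mutually inverse assignments between such quotients (up to isomorphism at the codomain) and deductively closed extensions $\T' \supseteq \T$.

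In one direction, a deductively closed $\T' \supseteq \T$ is sent to the surjection $q_{\T'}$ defined componentwise by $q_{\T',\vec x}([\alpha:\vec x]_\T) := [\alpha:\vec x]_{\T'}$; this is well-defined and a morphism in $\DoctABA^\ctx$ because provability in $\T$ implies provability in $\T'$ and because Boolean operations, substitutions, and quantifiers are computed syntactically. In the other direction, I send a quotient $\mathfrak{q}$ to
\[
\T_\P := \{\alpha \text{ closed} \mid \mathfrak{q}_{()}([\alpha:()]_\T) = \top_{\P(())}\}.
\]
Clearly $\T_\P \supseteq \T$, and $\T_\P$ is deductively closed by compactness of classical first-order logic together with $\mathfrak{q}_{()}$ being a Boolean homomorphism: if $\T_\P \vdash \alpha$, choose $\gamma_1,\dots,\gamma_k \in \T_\P$ with $\T \vdash \gamma_1 \wedge \dots \wedge \gamma_k \Rightarrow_{()} \alpha$, so that applying $\mathfrak{q}_{()}$ forces $\mathfrak{q}_{()}([\alpha:()]_\T) \geq \mathfrak{q}_{()}([\gamma_1:()]_\T) \wedge \dots \wedge \mathfrak{q}_{()}([\gamma_k:()]_\T) = \top$, whence $\alpha \in \T_\P$.

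The round-trip $\T' \mapsto \LT^{\T'} \mapsto \T_{\LT^{\T'}}$ recovers $\T'$ by deductive closedness, since $[\alpha:()]_{\T'} = \top_{\LT^{\T'}(())}$ means exactly $\T' \vdash \alpha$. For the opposite round-trip, \cref{p:univ-prop-ltlt} applied to the family $\mathbb{I}_n(R) := \mathfrak{q}_{\vec x}([R(\vec x):\vec x]_\T)$---which validates $\T_\P$ by the very definition of $\T_\P$---produces a factorization $\mathfrak{q} = \mathfrak{q}' \circ q_{\T_\P}$ for some $\mathfrak{q}' \colon \LT^{\T_\P} \to \P$ in $\DoctABA^\ctx$. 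Surjectivity of $\mathfrak{q}'$ is inherited from that of $\mathfrak{q}$, so everything reduces to showing each component $\mathfrak{q}'_{\vec x}$ is injective.

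I expect injectivity of $\mathfrak{q}'$ to be the main obstacle. My key lemma will be that in any first-order Boolean doctrine $\fa{Y}{X}(a) = \top$ iff $a = \top$; this follows from the adjunction characterization combined with the fact that $\P(\pr_1)$, being a Boolean homomorphism, preserves $\top$. Given $\mathfrak{q}'_{\vec x}([\alpha:\vec x]_{\T_\P}) = \mathfrak{q}'_{\vec x}([\beta:\vec x]_{\T_\P})$, Boolean structure re-expresses this as $\mathfrak{q}_{\vec x}([\alpha \leftrightarrow \beta:\vec x]_\T) = \top_{\P(\vec x)}$. The lemma together with preservation of $\forall$ by $\mathfrak{q}$ then yields
\[
\mathfrak{q}_{()}([\forall \vec x\, (\alpha \leftrightarrow \beta) : ()]_\T) = \top_{\P(())},
\]
so $\forall \vec x\, (\alpha \leftrightarrow \beta) \in \T_\P$; deductive closedness gives $\T_\P \vdash_{\vec x} \alpha \leftrightarrow \beta$, whence $[\alpha:\vec x]_{\T_\P} = [\beta:\vec x]_{\T_\P}$, as required.
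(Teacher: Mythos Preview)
Your argument is correct. The core idea is the same as the paper's: a congruence on $\LT^\T$ is determined by which universal closures $\forall\vec x\,(\alpha\leftrightarrow\beta)$ lie in the class of $\top$ at the empty context, and this is exactly what your injectivity step and your key lemma ($\fa{Y}{X}(a)=\top$ iff $a=\top$) encode.

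The presentations differ, however. The paper argues abstractly at the level of any first-order Boolean doctrine: quotients in the variety $\DoctABA^\C$ correspond to congruences, and congruences are determined by the filter on the fiber over the terminal object $\tmn$ (via the universal-closure trick); specializing to $\LT^\T$, filters of $\LT^\T(())$ are precisely deductively closed extensions of $\T$. Your route instead constructs the bijection explicitly for $\LT^\T$, invoking \cref{p:univ-prop-ltlt} to manufacture the factorization $\mathfrak{q}=\mathfrak{q}'\circ q_{\T_\P}$ and then verifying injectivity by hand. Your version is more self-contained and makes the dependence on the adjunction transparent; the paper's version is shorter and immediately yields the more general statement that quotients of any first-order Boolean doctrine correspond to filters of $\P(\tmn)$.
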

\begin{proof}
    For any first-order Boolean doctrine $\P$ over a small category $\C$, quotients of $\P$ in $\DoctABA^\C$ correspond to congruences on $\P$, which in turn correspond to filters of the fiber over the terminal object $\tmn$, since two formulas $\alpha$ and $\beta$ are in the same congruence class if and only if the universal closure of $\alpha \leftrightarrow \beta$ is in the congruence class of $\top_{\P(\tmn)}$. If $\P=\LT^{\T}$, such filters correspond to deductively closed extensions of $\T$.
\end{proof}

\section{Quantifier-free formulas and quantifier alternation depth for doctrines}
\label{s:q-depth}

Formulas of first-order logic can be stratified by their quantifier alternation depth, which, roughly speaking, corresponds to the number of nested blocks of $\forall$ and $\exists$.

A formula has quantifier alternation depth\dots
\begin{itemize}

    \item
    \dots $=0$ if and only if it is equivalent to a quantifier-free formula.
    
    \item 
    \dots $\leq n+1$ if and only if it is equivalent to a Boolean combination of quantifications of formulas with quantifier alternation depth at most $n$.
\end{itemize}

The notion of quantifier alternation depth can be refined to work modulo a first-order theory $\T$. 
In this case, a formula has quantifier alternation depth modulo $\T$\dots
\begin{itemize}
    \item \dots $=0$ if and only if it is $\T$-equivalent to a quantifier-free formula.

    \item \dots $\leq n+1$ if and only if it is $\T$-equivalent to a Boolean combination of quantifications of formulas with quantifier alternation depth modulo $\T$ at most $n$.
\end{itemize}

\begin{question}\label{question}
    Given a first-order theory $\T$, can we reconstruct the quantifier alternation depth modulo $\T$ from the first-order Boolean syntactic doctrine associated to $\T$? Equivalently, can we recover the notion of being a quantifier-free formula modulo $\T$?
\end{question}

The answer is no, as the following example illustrates.

\begin{example}[Quantifier-freeness modulo a theory is not an intrinsic notion] \label{ex:not-intrinsic}
    Let $\T$ be the theory of preordered sets in the language $\L \coloneqq \{\leq\}$.
    Set $\L'\coloneqq\{\leq, \min\}$, with $\min$ a unary relation symbol, and $\T'\coloneqq\T \cup \{\forall x\, (\min(x) \leftrightarrow \forall y\, (x \leq y))\}$.
    The syntactic doctrines $\LT^{\L,\T}$ and $\LT^{\L',\T'}$ are isomorphic, but the isomorphism does not preserve quantifier-free formulas, as $\forall y\, (x \leq y)$ is quantifier-free modulo $\T'$ but not modulo $\T$.
    Therefore, quantifier-freeness modulo a theory is not an intrinsic notion in the doctrinal setting.
\end{example}

Similar considerations occur, for instance, in the first-order fragment of logic on finite words of automata theory. Here, one distinguishes between the ``pure'' quantifier alternation hierarchy (which corresponds to the Straubing-Thérien hierarchy \cite{Straubing1981,Therien1981}) from other ``enriched'' quantifier alternation hierarchies, where one considers as ``quantifier-free'' (for the purposes of the hierarchy) certain formulas such as 
\begin{align*}
    \max(x) &\coloneqq \lnot \exists y\, x < y,\\
    \min(x) &\coloneqq \lnot \exists y\, y < x,\\
    +1(x, y) &\coloneqq (x < y) \land \lnot \exists z\,(x < z \land  z < y).
\end{align*}
These additions correspond to different levels of the dot-depth hierarchy \cite{CohenBrzozowski1971}. Note that considering $\max(x)$ as quantifier-free is essentially the same as adding a unary relation symbol $M$ to the language and the axiom $\forall x\, (M(x) \leftrightarrow \lnot \exists y\, x < y)$, and then taking the notion of quantifier-freeness modulo the new theory.

\begin{remark} \label{r:comparison}
    We do not know whether quantifier-free formulas become intrinsic when considered modulo a \emph{universal} theory, rather than an arbitrary one (indeed, the latter allows tricks as in \cref{ex:not-intrinsic}). If so, it would be interesting if one could find a characterization analogous to the one relative to the $\{\top, \land, =, \exists\}$-fragment in \cite{MaiettiTrotta2023} (see \cref{q:intrinsic} below).
\end{remark}

We consider a variation of the notion of a first-order Boolean doctrine in which the depth of alternation of quantifiers of formulas is taken into account. 

In this paper, we adopt the convention that $\N$ contains $0$.
Consider the syntactic doctrine of first-order formulas modulo a theory $\T$ as in \cref{fbf}.
For every $n \in \N$ and context $\vec{x}$ we define the Boolean algebra $\LT^\T_n(\vec{x})$ (of ``formulas with quantifier alternation depth at most $n$ modulo $\T$'') inductively on $n$:
\begin{itemize}
    \item 
    we define $\LT^\T_0(\vec x)\subseteq\LT^\T(\vec x)$ as the set of equivalence classes of quantifier-free first-order formulas modulo $\T$ with free variables in $\vec x$;
    
    \item \label{i:generate}
    for $n \geq 0$, $\LT^\T_{n+1}(\vec{x})$ is the Boolean subalgebra of $\LT^\T(\vec{x})$ generated by the elements $\forall\vec{y}\,\alpha(\vec x,\vec y)$ with $\vec{y}$ ranging over contexts and $\alpha(\vec x,\vec y)$ ranging over $\LT^\T_n(\vec x,\vec y)$. 
\end{itemize}

Moreover, we have a chain of inclusions
\[
    \LT^\T_0(\vec{x}) \subseteq \LT^\T_1(\vec{x}) \subseteq \LT^\T_2(\vec{x}) \subseteq \dots
\]
because $\forall\vec{y}\,\alpha(\vec x,\vec y)$ is equivalent to $\alpha(\vec x)$ whenever $\vec y$ is the empty list.

Furthermore, every first-order formula $\alpha(\vec{x})$ belongs to $\LT^\T_{n}(\vec{x})$ for some $n \in \N$. (This can be proved by induction on the complexity of $\alpha(\vec{x})$.)
In other words, 
\[
    \LT^\T(\vec{x}) = \bigcup_{n \in \N} \LT^\T_n(\vec{x}).
\]

The \emph{quantifier alternation depth modulo $\T$} of $\alpha(\vec x) \in \LT^\T(\vec x)$ is the least $n$ such that $\alpha(\vec x)\in \LT^\T_n(\vec x)$.

For each $n\in\N$, the assignment $\LT^\T_n$ can be extended to a functor
\begin{equation} \label{eq:ltn}
    \LT^\T_n\colon\ctx\op\to \BA   
\end{equation}
by defining the reindexing $\LT^\T_n(\vec t(\vec x))\colon \LT^\T_n(\vec{y})\to\LT^\T_n(\vec{x})$ along the tuple of terms $\vec{t}(\vec{x})\colon \vec x\to \vec{y}$ as the restriction of $\LT^\T(\vec t(\vec x))$.

The stratification of $\LT^\T$ into the sequence $\LT^\T_0$, $\LT^\T_1$, \ldots is the motivating example for the definitions that follow.
Since the notion of a first-order Boolean doctrine loses information about the quantifier alternation depth of a formula (\cref{ex:not-intrinsic}), we add further structure to take it into account. We propose two definitions, which we will prove to carry the same information.
The most intrinsic one is the second one.
\begin{enumerate}
    \item In \cref{d:whole-and-first-layer} we define a \emph{quantifier-free fragment} of a first-order Boolean doctrine. A quantifier-free fragment consists of the class of formulas ``considered as quantifier-free''. 
    
    \item In \cref{d:only-layers} we define a \emph{QA-stratified Boolean doctrine}: it consists of a sequence of Boolean doctrines modeling the stratification of the formulas by quantifier-alternation depth.
    An ``ambient'' first-order Boolean doctrine can be obtained as the directed union of all the layers.
\end{enumerate}

We will prove these notions to be in a one-to-one correspondence (\cref{p:equivalent,p:equivalent-cat}).
Moreover, in \cref{t:it-all-makes-sense,t:QA-makes-sense}, we will show that the notions of quantifier-free fragment and QA-stratified Boolean doctrines axiomatize, respectively, the set of quantifier-free formulas modulo a theory and the stratification of formulas under quantifier-alternation depth.

\subsection{Quantifier-free fragments} \label{subs:quantifier-free-fragments}

\begin{definition}[Quantifier-free fragment] \label{d:whole-and-first-layer}
    Let $\P\colon\C\op\to \BA$ be a first-order Boolean doctrine.
    A \emph{quantifier-free fragment of $\P$} is a functor $\P_0 \colon \C\op \to  \BA$ with the following properties.
    \begin{enumerate}
        \item (Subfunctor) \label{i:qff1} $\P_0$ is a subfunctor of $\P$, i.e.\:
        \begin{enumerate}[label = (\roman*), ref = \roman*]
            \item 
            For every $X \in \C$, $\P_0(X)$ is a Boolean subalgebra of $\P(X)$. 
            
            \item
            For every morphism $f \colon X' \to X$ in $\C$, the function $\P(f) \colon \P(X) \to \P(X')$ restricts to the function $\P_0(f)\colon \P_0(X) \to \P_0(X')$.
        \end{enumerate}
        
        \item (Generation) \label{i:qff3}
        For each object $X$ in $\C$, $\P(X) = \bigcup_{n \in \N} \P_n(X)$, where $\P_n(X)$ is the Boolean subalgebra of $\P(X)$ defined inductively on $n$ as follows.
        The poset $\P_0(X)$ is already defined; for $n \geq 0$, $\P_{n+1}(X)$ is the Boolean subalgebra of $\P(X)$ generated by the union of the images of $\P_{n}(X \times Y)$ under $\fa{Y}{X} \colon \P(X \times Y) \to \P(X)$, for $Y$ ranging over the objects of $\C$.  
    \end{enumerate}
\end{definition}

\begin{remark}\label{r:incl}
    In \cref{d:whole-and-first-layer}\eqref{i:qff3}, for every $n \geq 0$, $\P_{n}(X) \subseteq \P_{n+1}(X)$.
    Indeed, $X$ is a particular product of $X$ with the terminal object $\tmn$, and the first projection $\pr_1 \colon X = X \times \tmn \to X$ is the identity.
    The function $\P(\pr_1) \colon \P(X) \to \P(X)$ is the identity, and thus its right adjoint $\fa{\tmn}{X} \colon \P(X) \to \P(X)$ is also the identity.
    Thus, the image of $\P_{n}(X)$ under $\fa{\tmn}{X}$ is $\P_{n}(X)$, which is then contained in $\P_{n+1}(X)$.
\end{remark}

Let $\C$ be a category with finite products. We recall from \cref{r:many-sorted} the presentation of $\DoctABA^\C$ as a variety of many-sorted algebras.
We partition the set of operations into the set $\mathcal{S}$ of substitutions, the set $\mathcal{B}$ of Boolean operations, and the set $\mathcal{Q}$ of quantifications.
Given a set $\mathcal{O}$ of algebraic operations on an algebra $A$, we write $\mathcal{O}(S)$ for the closure of $S\subseteq A$ under $\mathcal{O}$.

With this notation, the conditions in \cref{d:whole-and-first-layer} mean respectively
\begin{enumerate}
    \item $\P_0 = \mathcal{B}(\P_0) = \mathcal{S}(\P_0)$.
    \item $\P = \bigcup_{n \in \N}(\mathcal{B}\circ\mathcal{Q})^{n}(\P_0)$.
\end{enumerate}

We call \emph{QA-stratification} (short for \emph{quantifier alternation stratification}) of a first-order Boolean doctrine $\P$ the sequence of Boolean subfunctors $(\P_n)_{n \in \N}$ of $\P$ obtained by setting
\[
\P_n\coloneqq(\mathcal{B}\circ\mathcal{Q})^{n}(\P_0)
\]
for some quantifier-free fragment $\P_0$ of $\P$.

\begin{example}(Motivating example) \label{ex:motivating-strat-fragment}
    Consider the syntactic doctrine $\LT^\T$ of formulas modulo $\T$ (\cref{fbf}), and its subfunctors $\LT^\T_n$ (for $n \in \N$) as in \eqref{eq:ltn}.
    The functor $\LT^\T_0$ is a quantifier-free fragment of $\LT^\T$, and the sequence $(\LT^\T_n)_{n\in\N}$ is the QA-stratification of $\LT^\T$ arising from $\LT^\T_0$.
\end{example}

\begin{remark}[Quantifier-free fragment = generating Boolean subdoctrine]    
We note that a quantifier-free fragment is simply a \emph{generating Boolean subdoctrine}.
By this we mean that a quantifier-free fragment of a first-order Boolean doctrine $\P$ (\cref{d:whole-and-first-layer}) is precisely an $(\S\cup\B)$-subalgebra $\P_0$ of $\P$ that $(\S \cup \B \cup \Q)$-generates $\P$, i.e.\
\begin{enumerate}
    \item $\P_0 = \mathcal{B}(\P_0) = \mathcal{S}(\P_0)$.
    \item $\P = (\S \cup \B \cup \Q)(\P_0)$.
\end{enumerate}
This relies on the following facts:
\begin{enumerate}
    \item 
    Suppose that the set of algebraic operations decomposes as a union $\mathcal{K}\cup \mathcal{H}$.
    Then any generating $\mathcal{K}$-subalgebra $G$ of a $(\mathcal{K}\cup \mathcal{H})$-algebra $A$ extends to
    a sequence
    \[
    G\subseteq (\mathcal{K}\circ \mathcal{H})(G)\subseteq (\mathcal{K} \circ \mathcal{H})^2(G)\subseteq \dots
    \]
    whose union is $A$.
    In our case, $\mathcal{K}=\mathcal{S}\cup \mathcal{B}$ and $\mathcal{H}=\mathcal{Q}$.

    \item if $\R$ is an $\S$-subalgebra of $\P$, then $((\S \cup \B)\circ\Q)(\R) = (\B\circ\Q)(\R)$: the inclusion $\supseteq$ is obvious, while the inclusion $\subseteq$ holds because $\B\Q(\R)$ is closed under $\S$, indeed
    \begin{align*}
        \S\B\Q(\R) & \subseteq \B\S\Q(\R) &&\text{(by functoriality)}\\
        & \subseteq \B\Q\S(\R) && \text{(by Beck-Chevalley)}\\
        & = \B\Q(\R) &&\text{(since $\R = \S(\R)$)}.
    \end{align*}
\end{enumerate}
\end{remark}

We note that any functor $\P_n$ in a QA-stratification $(\P_n)_{n \in \N}$ of $\P$ is a quantifier-free fragment of $\P$; for example, any $\LT^\T_n$ is a quantifier-free fragment of $\LT^\T$.

As an additional example, we mention that any first-order Boolean doctrine is a quantifier-free fragment of itself.
The rationale behind this is that all first-order formulas can be seen as quantifier-free modulo a new theory in an expanded language; this process is called Morleyzation, and consists in adding to the language a new symbol $R_\alpha$ for each first-order formula $\alpha(\vec{x})$, together with the axiom $\forall\vec{x}\,(R_\alpha(\vec{x}) \leftrightarrow \alpha(\vec{x}))$.

For the interested reader, in \cref{s:no-smallest} we exhibit a first-order Boolean doctrine with no least quantifier-free fragment. In fact, the lack of a least generating subset (or of a least generating subreduct) is a common phenomenon for algebraic structures.

\begin{example}[$\mathscr{P}$ is the unique quantifier-free fragment of itself]
    We show that the subset doctrine $\mathscr{P} \colon \Set\op \to \BA$ has a unique quantifier-free fragment: itself.
    That the whole $\mathscr{P}$ is a quantifier-free fragment of itself is easily seen, as this holds for any first-order Boolean doctrine, as we just saw.
    
    To prove that this is the only one, let $\P_0$ be a quantifier-free fragment of $\mathscr{P}$.
    We first show that $\P_0$ is not the subfunctor $\mathbf{T} \colon \Set\op \to \BA$ of $\mathscr{P}$ defined by $\mathbf{T}(X) \coloneqq \{\varnothing, X\}$.
    To do so, we prove that $\mathbf{T}$ is closed under quantifiers. (From this it will follow that $\mathbf{T}$ is not a quantifier-free fragment of $\mathscr{P}$, since $\mathbf{T} \neq \mathscr{P}$.) Recall the interpretation of the universal quantifier in $\mathscr{P}$:
    \begin{align*}
    \fa{Y}{X}\colon \mathscr{P}(X\times Y)&\longrightarrow\mathscr{P}(X)\\
    S&\longmapsto\{x\in X\mid  \text{for all }y \in Y, \, (x, y) \in S\}.
    \end{align*}
    The image of $\top_{\mathscr{P}(X\times Y)}=X\times Y$ is $\top_{\mathscr{P}(X)}=X$. The image of $\bot_{\mathscr{P}(X\times Y)}=\varnothing$ is $\bot_{\mathscr{P}(X)}=\varnothing$ if $Y\neq\varnothing$, and $X$ otherwise.
    This shows that $\mathbf{T}$ does not ``generate'' $\mathscr{P}$ and hence is not a quantifier-free fragment of it. Therefore, $\P_0 \neq \mathbf{T}$.
    
    From $\P_0 \neq \mathbf{T}$ we deduce that $\P_0$ strictly contains $\mathbf{T}$. 
    Thus, there is $X \in \Set$ such that $\P_0(X) \neq \{\varnothing, X\}$.
    Therefore, there is $A \in \P_0(X)$ such that $\varnothing \ne A \neq X$, and so there are $a \in A$ and $b \in X \setminus A$.
    Let $Y \in \Set$, and let us show that $\P_0(Y) = \mathscr{P}(Y)$.
    Let $C \in \mathscr{P}(Y)$.
    Let $f \colon Y \to X$ be the function that maps an element $y \in Y$ to $a$ if $y \in C$ and to $b$ otherwise.
    Since $\P_0$ is closed under reindexings, from $A \in \P_0(X)$ it follows that the element $C = f^{-1}[A] = \mathscr{P}(f)(A)$ of $\mathscr{P}(Y)$ belongs to $\P_0(Y)$.
    Since $C$ is arbitrary, $\P_0(Y) = \mathscr{P}(Y)$.
\end{example}

As stated in \cref{ex:motivating-strat-fragment}, the set of quantifier-free formulas modulo a theory $\T$ form a quantifier-free fragment of the syntactic doctrine $\LT^\T$ in the sense of \cref{d:whole-and-first-layer}. We now show a converse below: every quantifier-free fragment of a first-order Boolean doctrine $\P$ over a category of contexts arises as the image under an isomorphism $\LT^\T\to\P$ of the quantifier-free fragment of quantifier-free formulas of a syntactic doctrine $\LT^\T$.

\begin{theorem}[A quantifier-free fragment is the set of quantifier-free formulas modulo a theory] \label{t:it-all-makes-sense}
    Let $\P_0$ be a quantifier-free fragment of a first-order Boolean doctrine $\P\colon\ctx\op\to\BA$, where $\ctx$ is the category of contexts for a functional language $\mathbb{F}$.
    There are a relational language $\mathbb{P}$, a theory $\mathcal{T}$ in the language $(\mathbb{F},\mathbb{P})$ and a first-order Boolean doctrine isomorphism between $\P$ and $\LT^{\T}$ under which $\P_0$ corresponds to $\LT_0^{\mathcal{T}}$ (= the subfunctor of $\LT^{\mathcal{T}}$ of quantifier-free formulas modulo $\mathcal{T}$). 
\end{theorem}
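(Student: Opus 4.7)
The plan is to enrich the functional language $\mathbb{F}$ with a relation symbol for each element of $\P_0$, to take as theory $\mathcal{T}$ the collection of $\L$-sentences that ``evaluate to $\top$'' in $\P$ under the tautological interpretation, and then to invoke the universal property of the syntactic doctrine (\cref{p:univ-prop-ltlt}) to obtain a morphism $\LT^{\mathcal{T}}\to\P$ which I will show is an isomorphism matching the two quantifier-free fragments. Concretely, for each $n\in\N$ let $\mathbb{P}_n$ be a set in bijection with the underlying set of $\P_0(x_1,\dots,x_n)$, and write $R_\gamma$ for the relation symbol corresponding to $\gamma$; set $\L=(\mathbb{F},\bigcup_n\mathbb{P}_n)$ and define $\mathbb{I}_n\colon\mathbb{P}_n\to\P(x_1,\dots,x_n)$ by $R_\gamma\mapsto\gamma$, letting $\mathcal{I}$ be the extension of $(\mathbb{I}_n)_n$ to all $\L$-formulas in context provided by \cref{r:int-fmlas}; finally, let $\mathcal{T}$ be the set of all $\L$-sentences $\varphi$ with $\mathcal{I}(\varphi:())=\top_{\P(())}$. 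By construction, \cref{p:univ-prop-ltlt} yields a first-order Boolean doctrine morphism $(\id_\ctx,\mathfrak{i})\colon\LT^{\mathcal{T}}\to\P$ with $\mathfrak{i}_{\vec x}([\alpha:\vec x]_\mathcal{T})=\mathcal{I}(\alpha:\vec x)$.

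Next, I would verify that $\mathfrak{i}$ is an isomorphism. For surjectivity of each $\mathfrak{i}_{\vec x}$, the generation axiom \cref{d:whole-and-first-layer}\eqref{i:qff3} gives $\P(\vec x)=\bigcup_n\P_n(\vec x)$, and induction on $n$ shows every element of $\P_n(\vec x)$ has the form $\mathcal{I}(\alpha:\vec x)$ for some $\L$-formula $\alpha$: each $\gamma\in\P_0(\vec x)$ equals $\mathcal{I}(R_\gamma(\vec x):\vec x)$, and the inductive step uses that $\mathcal{I}$ commutes with Booleans, reindexings, and quantifiers. For injectivity, suppose $\mathcal{I}(\alpha:\vec x)=\mathcal{I}(\beta:\vec x)$; then $\mathcal{I}(\alpha\leftrightarrow\beta:\vec x)=\top_{\P(\vec x)}$, and applying \cref{r:int-fmlas}\eqref{i:int-fa} one sees $\mathcal{I}(\forall\vec x\,(\alpha\leftrightarrow\beta):())=\top_{\P(())}$, so the sentence $\forall\vec x\,(\alpha\leftrightarrow\beta)$ lies in $\mathcal{T}$, whence $[\alpha:\vec x]_\mathcal{T}=[\beta:\vec x]_\mathcal{T}$ by instantiation in context $\vec x$ within the sequent calculus.

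Finally, I would show $\mathfrak{i}_{\vec x}(\LT_0^{\mathcal{T}}(\vec x))=\P_0(\vec x)$. The inclusion $\subseteq$ is an induction on the complexity of a quantifier-free $\L$-formula: the atoms $R_\gamma(\vec t(\vec x))$ have interpretation $\P(\vec t)(\gamma)\in\P_0(\vec x)$ because $\P_0$ is closed under reindexings, and $\P_0(\vec x)$ is a Boolean subalgebra of $\P(\vec x)$. The reverse inclusion is immediate since each $\gamma\in\P_0(x_1,\dots,x_n)$ is the interpretation of the atomic formula $R_\gamma(x_1,\dots,x_n)$. I expect the most delicate step to be the injectivity of $\mathfrak{i}$, which hinges on the maximality of $\mathcal{T}$ and on bridging between the semantic statement $\mathcal{I}(\forall\vec x\,(\alpha\leftrightarrow\beta):())=\top_{\P(())}$ and $\mathcal{T}$-provability of $\alpha\Leftrightarrow_{\vec x}\beta$; everything else is a routine induction.
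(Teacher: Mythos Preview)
Your proposal is correct and follows essentially the same route as the paper: define $\mathbb{P}_n$ as a copy of $\P_0(x_1,\dots,x_n)$, take $\mathcal{T}$ to be all sentences interpreted as $\top$, invoke \cref{p:univ-prop-ltlt}, and check that the resulting morphism is an isomorphism carrying $\LT_0^{\mathcal{T}}$ to $\P_0$. The paper is slightly terser in places (e.g.\ for surjectivity it just says the image of $\mathfrak{i}$ is the first-order Boolean subdoctrine generated by $\P_0$, hence all of $\P$, and for injectivity it reduces at once to the fiber over $()$), but your explicit inductions and your injectivity argument via universal closure amount to unpacking exactly the same reasoning.
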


\begin{proof}
    For each $n\in\N$, we set $\mathbb{P}_n \coloneqq \{R_{\gamma}\mid {\gamma\in\P_0(x_1,\dots,x_n)}\}$ as the set of $n$-ary predicate symbols, $\mathbb{P}\coloneqq\bigcup_{n\in\N}\mathbb{P}_n$, and $\L\coloneqq(\mathbb{F},\mathbb{P})$. 
    We define the family of functions $(\mathbb{I}_n\colon\mathbb{P}_n\to\P,\, R_\gamma\mapsto\gamma)_{n \in \N}$, and we let $\mathcal{I}\colon\mathrm{Form}(\L)\to\bigcup_{\vec x\in\ctx}\P(\vec x)$ be its extension as in \cref{r:int-fmlas}. Define $\mathcal{T}\coloneqq\{\alpha\mid\mathcal{I}(\alpha:())=\top_{\P()}\}$. 
    
    Let $(\id,\mathfrak{i})\colon\LT^{\T}\to\P$ be the first-order Boolean doctrine morphism associated to the family $(\mathbb{I}_n)_{n}$ by the bijection in \cref{p:univ-prop-ltlt}; for all $\vec x\in\ctx$, $\mathfrak{i}_{\vec x}([\alpha:\vec x]_\T)=\mathcal{I}(\alpha:\vec x)$.
    We have that $\mathfrak{i}$ is componentwise surjective since its image is the subalgebra of $\P$ generated by the images of $(\mathbb{I}_n)_n$, i.e.\ the subalgebra of $\P$ generated by $\P_0$, i.e.\ $\P$.
    It is easy to prove that componentwise injectivity of $\mathfrak{i}$ is equivalent to
    \[\mathfrak{i}_{()}([\alpha:()]_\T)=\top_{\P()}\iff[\alpha:()]_\T=[\top:()]_\T,\]
    and the latter follows from the definition of $\T$.
    
    We now prove that, under the isomorphism $(\id,\mathfrak{i})\colon \LT^{\T}\to\P$, the formulas in $\LT^{\T}$ that are quantifier-free modulo $\T$ correspond to the elements in $\P_0$.

    \[
    \begin{tikzcd}
    	{\LT^{\T}_0} & {\LT^{\T}}  \\
    	{\P_0} & {\P}
    	\arrow[hook, from=1-1, to=1-2]
    	\arrow[dashed, from=1-1, to=2-1]
    	\arrow["{\text{\rotatebox[origin=c]{90}{$\sim$}}}"',"{(\id,\mathfrak{i})}", from=1-2, to=2-2]
    	\arrow[hook, from=2-1, to=2-2]
    \end{tikzcd}
    \]

    For all $\vec x\in\ctx$ and for all $\gamma\in\P(\vec x)$ we have \[\mathfrak{i}_{\vec x}([R_\gamma(\vec x):\vec x]_\T)=\mathcal{I}(R_\gamma(\vec x):\vec x)=\mathbb{I}_n(R_\gamma)=\gamma.\]
    
    Since $\LT^{\T}_0$ is the Boolean doctrine generated by $\{[R_\gamma(\vec x):\vec x]_\T\}_{\vec x\in\ctx, \gamma\in\P_0(\vec x)}$, its image in $\P$ is the Boolean doctrine generated by $\{\gamma\}_{\vec x\in\ctx, \gamma\in\P_0(\vec x)}$, which is $\P_0$, since it is closed under Boolean operations and substitutions. Conversely, for every $\gamma\in\P_0(\vec x)$, we have $\mathfrak{i}_{\vec x}^{-1}(\gamma)=[R_\gamma(\vec x):\vec x]_\T\in\LT^{\T}_0$. 
\end{proof}

We observe that, taking $\P_0 =\P$ in \cref{t:it-all-makes-sense}, we get the following statement, which says that any first-order Boolean doctrine over the category of contexts for some functional language is a syntactic doctrine.

\begin{corollary}
    Let $\P$ be a first-order Boolean doctrine over the category of contexts for a functional language $\mathbb{F}$. There is a set $\mathbb{P}$ of predicate symbols and a theory $\T$ in the language $(\mathbb{F},\mathbb{P})$ with $\P\cong\LT^{\T}$.
\end{corollary}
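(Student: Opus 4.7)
The plan is to apply \cref{t:it-all-makes-sense} to the trivial choice $\P_0 \coloneqq \P$. First I would verify that $\P$ is a quantifier-free fragment of itself in the sense of \cref{d:whole-and-first-layer}. Condition \eqref{i:qff1} (subfunctor) is automatic since $\P$ is a subfunctor of itself, and condition \eqref{i:qff3} (generation) is equally immediate: because $\P_0(X) = \P(X)$ for every $X \in \ctx$, the chain $\P_0(X) \subseteq \P_1(X) \subseteq \dots$ has union $\P(X)$ trivially. Conceptually, this is the Morleyzation procedure recalled in the discussion preceding the examples of quantifier-free fragments: one treats every formula of $\P$ as ``quantifier-free'' by enlarging the relational signature accordingly.

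Then I would invoke \cref{t:it-all-makes-sense} with this choice: it yields a relational language $\mathbb{P}$, a theory $\T$ in the language $(\mathbb{F},\mathbb{P})$, and a first-order Boolean doctrine isomorphism $\LT^{\T} \cong \P$, which is precisely what the corollary asks for. The additional clause in \cref{t:it-all-makes-sense} — that $\LT_0^\T$ corresponds to $\P_0$ under the isomorphism — specializes here to the (consistent) statement that every element of $\P$ corresponds to a quantifier-free formula modulo $\T$, reflecting the fact that $\mathbb{P}$ contains a predicate symbol $R_\gamma$ for \emph{every} $\gamma \in \P(x_1,\dots,x_n)$, not merely those in a proper subfunctor.

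Since this corollary is a straightforward specialization, there is no substantive obstacle in the plan: all the work has been absorbed into the proof of \cref{t:it-all-makes-sense}. The only point worth double-checking is that the construction in that proof — setting $\mathbb{P}_n \coloneqq \{R_\gamma \mid \gamma \in \P_0(x_1,\dots,x_n)\}$ and defining $\T$ via the extension $\mathcal{I}$ of the tautological family $\mathbb{I}_n \colon R_\gamma \mapsto \gamma$ — goes through unchanged when $\P_0 = \P$, which it does: no smallness or properness assumption on $\P_0$ is used there beyond what $\P$ itself already provides.
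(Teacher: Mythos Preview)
Your proposal is correct and matches the paper's own argument exactly: the corollary is obtained by taking $\P_0 = \P$ in \cref{t:it-all-makes-sense}, and the paper explicitly notes this just before stating the corollary. Your verification that $\P$ is a quantifier-free fragment of itself is also consistent with the paper's earlier remark that every first-order Boolean doctrine is a quantifier-free fragment of itself.
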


\subsection{QA-stratified Boolean doctrines}

The notion of quantifier-free fragment (and of its corresponding QA-stratification) in \cref{d:whole-and-first-layer} requires the ``ambient'' first-order Boolean doctrine to be given.
We will soon define QA-stratified Boolean doctrines as certain sequences $(\P_n)_n$ of Boolean doctrines, without an ambient first-order Boolean doctrine given a priori.
But first, we single out the properties that the pairs $(\P_n, \P_{n+1})$ should satisfy (in the style of \cite{CoumansVanGool2013}).

\begin{definition}[QA-one-step Boolean doctrine] \label{d:one-step-doctrine}
    A \emph{QA-one-step Boolean doctrine} over a category $\C$ with finite products is a componentwise injective natural transformation $i \colon \P_0 \hookrightarrow \P_1$ between two functors $\P_0, \P_1 \colon \C\op \to  \BA$ with the following properties.
    \begin{enumerate}
        \item \label{i:one-step-universal}
        (One-step universal)
        For every projection $\pr_1 \colon X \times Y \to X$ in $\C$ and $\beta \in \P_0(X \times Y)$, there is an element $\fa{Y}{X} \beta \in \P_{1}(X)$ such that, for all $\alpha \in \P_{1}(X)$, 
       \[
           \alpha \leq \fa{Y}{X} \beta \text{ in $\P_{1}(X)$} \Longleftrightarrow \P_{1}(\pr_1)(\alpha) \leq i_{X \times Y}(\beta) \text{ in $\P_{1}(X \times Y)$}.
       \]
       (Note that one such element $\fa{Y}{X} \beta$ is unique.)
        
        \item \label{i:one-step-Beck-Chevalley}
        (One-step Beck-Chevalley)
        For every morphism $f\colon X'\to X$ in $\C$, the following diagram in $\Pos$ commutes.
        \[
            \begin{tikzcd}
                {\P_0(X\times Y)} & {\P_{1}(X)}  \\
                {\P_0(X'\times Y)} & {\P_{1}(X')} 
                \arrow["{\P_0(f\times\id_{Y})}", from=1-1, to=2-1, swap]
                \arrow["{\P_{1}(f)}", from=1-2, to=2-2]
                \arrow["{\fa{Y}{X'}}"', from=2-1, to=2-2]
                \arrow["{\fa{Y}{X}}", from=1-1, to=1-2]
            \end{tikzcd}
        \]
        \item (One-step generation)
        For all $X \in \C$, $\P_{1}(X)$ is generated, as a Boolean algebra, by the union of the images of the functions $\fa{Y}{X} \colon \P_0(X \times Y) \to \P_{1}(X)$ for $Y$ ranging over $\C$.
    \end{enumerate}

\end{definition}

Since the natural transformation $i \colon \P_0 \hookrightarrow \P_1$ is componentwise injective, we will often suppose $i$ to be a componentwise inclusion, avoiding mentioning $i$, or thinking of $i \colon \P_0 \hookrightarrow \P_1$ as a pair $(\P_0, \P_1)$.

\begin{definition}[QA-stratified Boolean doctrine] \label{d:only-layers}
    A \emph{QA-stratified Boolean doctrine} (short for \emph{quantifier alternation stratified Boolean doctrine}) over a category $\C$ with finite products is a sequence of functors $(\P_n \colon \C\op \to  \BA)_{n \in \N}$ and QA-one-step Boolean doctrines $(i_n \colon \P_n \hookrightarrow \P_{n+1})_{n \in \N}$ such that, for all $X,Y\in \C$ and $n\in \N$, the following diagram commutes:
    \[
        \begin{tikzcd}[column sep = 4 em]
            \P_n(X \times Y) \arrow{r}{\fa{Y}{X,n}} \arrow[hook,swap]{d}{(i_n)_{X \times Y}} & \P_{n+1}(X)\arrow[hook]{d}{(i_{n+1})_X}\\
            \P_{n + 1}(X \times Y) \arrow[swap]{r}{\fa{Y}{X,n+1}} & \P_{n+2}(X)
        \end{tikzcd}
    \]
    where, for every $n\in\N$, $\fa{Y}{X,n}\colon\P_n(X\times Y)\to\P_{n+1}(X)$ is the one-step quantifier of the QA-one-step Boolean doctrine $(\P_n,\P_{n+1})$.
\end{definition}

As above, since for every $n\in\N$ the natural transformation $i_n \colon \P_n \hookrightarrow \P_{n+1}$ is componentwise injective, we will often suppose $i_n$ to be the componentwise inclusion of fibers of $\P_n$ into fibers of $\P_{n+1}$, thinking of $((\P_n)_{n\in\N},(i_n)_{n\in\N})$ simply as a sequence $(\P_n)_{n\in\N}$.

The fact that the axiomatization of QA-stratified Boolean doctrines involves only pairs and not triples $\P_n \hookrightarrow \P_{n+1} \hookrightarrow \P_{n+2}$  (besides the connecting condition saying that each pair extends the previous one) is due to the fact that the equations for quantifiers (adjunction and Beck–Chevalley) are of rank 1, meaning that they only involve nesting one quantifier.

\begin{proposition} \label{p:equivalent}
    QA-stratified Boolean doctrines are in 1:1 correspondence with first-order Boolean doctrines equipped with a quantifier-free fragment of it.\footnote{Here, we are supposing that the componentwise injective natural transformations in QA-stratified Boolean doctrines are subset inclusions.}
\end{proposition}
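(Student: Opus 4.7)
The plan is to construct mutually inverse assignments between the two classes of data. Starting from a first-order Boolean doctrine $\P$ equipped with a quantifier-free fragment $\P_0$, the associated QA-stratification is the sequence $(\P_n)_{n \in \N}$ defined inductively as in \cref{d:whole-and-first-layer}\eqref{i:qff3}. First I would check that each $\P_n$ is a subfunctor of $\P$; the nontrivial case is closure under reindexings, which follows inductively from Beck-Chevalley applied to $\fa{Y}{X}$ in $\P$. Next, for each $n$, I would verify that the inclusion $\P_n \hookrightarrow \P_{n+1}$ is a QA-one-step Boolean doctrine: given $\beta \in \P_n(X \times Y)$, the element $\fa{Y}{X} \beta$ computed in $\P$ already lies in $\P_{n+1}(X)$ by construction; the one-step adjunction and Beck-Chevalley follow by restricting the corresponding properties of $\P$ to the Boolean subalgebras $\P_{n+1}(X)$ and $\P_{n+1}(X \times Y)$; and one-step generation is exactly the inductive clause defining $\P_{n+1}$. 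The compatibility square of \cref{d:only-layers} commutes because both composites compute the same quantification in $\P$.

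For the converse direction, given a QA-stratified Boolean doctrine $(\P_n)_{n \in \N}$, I would define $\P(X) \coloneqq \bigcup_{n \in \N} \P_n(X)$ as the directed union in $\BA$, with reindexings induced from those of the $\P_n$. To equip $\P$ with universal quantifiers, for $\beta \in \P(X \times Y)$ one picks $n$ with $\beta \in \P_n(X \times Y)$ and sets $\fa{Y}{X} \beta \coloneqq \fa{Y}{X,n} \beta \in \P_{n+1}(X) \subseteq \P(X)$. The main technical obstacle is showing that this choice is independent of $n$ and yields an actual right adjoint in $\P$: the connecting square of \cref{d:only-layers} guarantees that replacing $n$ by $n+1$ produces the same element after inclusion, and induction handles arbitrary increases. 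The adjunction in $\P$ then follows by choosing any level $m$ large enough that $\alpha \in \P_m(X)$ and $\beta \in \P_{m-1}(X \times Y)$ and invoking the one-step adjunction of \cref{d:one-step-doctrine}; Beck-Chevalley is handled analogously. That $\P_0$ is a quantifier-free fragment of $\P$ amounts to $\P_0$ being a Boolean subfunctor (immediate) and to the generation axiom, which matches the inductive description via one-step generation.

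It remains to verify that the two procedures are mutually inverse. In one direction, starting from $(\P, \P_0)$, forming the stratification $(\P_n)_n$, and taking the directed union, one recovers $\P$ by the generation axiom for a quantifier-free fragment, while $\P_0$ is unchanged on the nose. In the other direction, starting from $(\P_n)_n$, forming $\P$ as the directed union, and then computing the induced stratification inside $\P$, one-step generation shows by induction on $n$ that the $n$-th induced layer coincides with the original $\P_n$. The remainder of the argument is bookkeeping on directed unions of Boolean algebras and on the subfunctor structure, so the only genuine subtlety lies in the level-independent definition of the quantifier in the passage from a QA-stratification back to an ambient first-order Boolean doctrine.
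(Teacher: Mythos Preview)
Your proposal is correct and follows essentially the same approach as the paper: construct the stratification from $(\P,\P_0)$ by restricting the ambient quantifiers, construct the ambient doctrine from $(\P_n)_n$ as the fiberwise directed union with $\fa{Y}{X}$ defined level-by-level (well-definedness from the connecting square, adjunction and Beck--Chevalley checked at a sufficiently high level), and verify mutual inverseness. You are in fact somewhat more explicit than the paper in checking that each $\P_n$ is closed under reindexings (via Beck--Chevalley) and in the inductive verification that the two procedures return the original data; the paper dispatches these points as ``straightforward'' and ``immediate''.
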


\begin{proof}
    Let $\P_0$ be a quantifier-free fragment of a first-order Boolean doctrine $\P$, and $(\P_n)_{n\in\N}$ the induced QA-stratification.
    The sequence $\P_0 \hookrightarrow \P_1 \hookrightarrow \P_2\hookrightarrow \dots$ is a QA-stratified Boolean doctrine; the proof of this is straightforward once one defines the ``partial'' quantifiers as the restrictions of the quantifiers in $\P$. 

    Conversely, given a QA-stratified Boolean doctrine $((\P_n \colon \C\op \to  \BA)_{n \in \N},(i_n)_{n\in\N})$, we obtain a Boolean doctrine as the (fiberwise computed) colimit $\P$ of $(\P_n \colon \C\op \to  \BA)_{n \in \N}$, as follows.
    For $X \in \C$, we set $\P(X) = \bigcup_{n \in \N} \P_n(X)$ (where we are assuming that the comparison natural transformations $i_n$ are componentwise subset inclusions), with the obvious structure of a Boolean algebra.
    For a morphism $f \colon X' \to X$ in $\C$, the function $\P(f) \colon \P(X) \to \P(X')$ is the obvious one.
    We first prove that $\P$ is a first-order Boolean doctrine. For every projection $\pr_1 \colon X \times Y\to X$, define the function $\fa{Y}{X}\colon\P(X \times Y)\to\P(X)$ as $\fa{Y}{X} \coloneqq \bigcup_{n\in\N}\fa{Y}{X,n}$. This is well-defined by the commutativity of the diagram in \cref{d:only-layers}. To check that $\fa{Y}{X}$ is the right adjoint to $\P(\pr_1)$, let $\alpha\in \P(X)$ and $\beta\in\P(X \times Y)$.
    There is $n\in\N$ large enough so that $\alpha\in \P_{n+1}(X)$ and $\beta\in\P_n(X \times Y)$.
    Then,
    \begin{align*}
        &\alpha\leq\fa{Y}{X}\beta && \text{(in $\P(X)$)}\\
        &\iff \alpha\leq \fa{Y}{X,n}\beta && \text{(in $\P_{n+1}(X)$)}\\
        &\iff \P_{n+1}(\pr_1)(\alpha)\leq i_{X \times Y, n}(\beta) &&\text{(in $\P_{n+1}(X \times Y)$)}\\
        &\iff\P(\pr_1)(\alpha)\leq\beta &&\text{(in $\P(X \times Y)$)}.
    \end{align*}
    Hence, $\fa{Y}{X}$ is the right adjoint of $\P(\pr_1)$. 
    The Beck-Chevalley condition follows from \cref{d:one-step-doctrine}\eqref{i:one-step-Beck-Chevalley}. 
    So, $\P$ is indeed a first-order Boolean doctrine, and $\P_0$ is easily seen to be a quantifier-free fragment of $\P$.

    Finally, it is immediate that these assignments are mutually inverse.
\end{proof}

The correspondence between QA-stratified Boolean doctrines and first-order Boolean doctrines equipped with a quantifier-free fragment can be made into a categorical equivalence; we introduce the two categories.

\begin{definition}[The category $\QFF$ of quantifier-free fragments]
    Let $\P$ and $\R$ be first-order Boolean doctrines, and let $\P_0$ and $\R_0$ be quantifier-free fragments of $\P$ and $\R$, respectively.
    A \emph{morphism of quantifier-free fragments} from $(\P_0, \P)$ to $(\R_0, \R)$ is a first-order Boolean doctrine morphism $(M, \m) \colon \P \to \R$ that restricts to a Boolean doctrine morphism $\P_0 \to \R_0$.
    \[\begin{tikzcd}
        \P_0\arrow[hook,r]\arrow[d,dashed] & \P\arrow[d,"{(M,\m)}"] \\
        \R_0\arrow[hook,r] & \R
    \end{tikzcd}\]

    We let $\QFF$ denote the category whose objects are pairs $(\P_0, \P)$ with $\P_0$ a quantifier-free fragment of a first-order Boolean doctrine $\P$ and whose morphisms are morphisms of quantifier-free fragments.
\end{definition}

\begin{definition}[Morphism of QA-one-step Boolean doctrines]
    Let $\P_0 \hookrightarrow \P_1$ and $\R_0 \hookrightarrow \R_1$ be QA-one-step Boolean doctrines over $\C$ and $\D$, respectively.
    A \emph{morphism of QA-one-step Boolean doctrines} from $(\P_0, \P_1)$ to $(\R_0, \R_1)$ is a triple $(M, j_0,j_1)$ with $M \colon \C \to \D$ a functor preserving finite products and $j_0 \colon \P_0 \to \R_0 \circ M\op$ and $j_1 \colon \P_1 \to \R_1 \circ M\op$ natural transformations making the following diagram commute
    \[
    \begin{tikzcd}
        \P_0 \arrow[hook]{r}{} \arrow[swap]{d}{j_0} & \P_1 \arrow{d}{j_1}\\
        \R_0\circ M\op \arrow[swap,hook]{r}{} & \R_1\circ M\op
    \end{tikzcd}
    \]
    and such that, for all $X, Y \in \C$, the following diagram in $\Pos$ commutes.
    \[
        \begin{tikzcd}[column sep = 5em]
            \P_0(X \times Y) \arrow{r}{\fa{Y}{X}} \arrow[swap]{d}{(j_0)_{X \times Y}} & \P_{1}(X)\arrow{d}{(j_1)_{X}}\\
            \R_{0}(M(X) \times M(Y)) \arrow[swap]{r}{\fa{M(Y)}{M(X)}} & \R_{1}(M(X))
        \end{tikzcd}
    \]
\end{definition}

\begin{definition}[The category $\QA$ of QA-stratified Boolean doctrines]
    A \emph{morphism of QA-stratified Boolean doctrines} from $(\P_n)_{n \in \N}$ to $(\R_n )_{n \in \N}$ is a pair $(M, (\m_n)_{n\in\N})$ such that, for every $n\in\N$, $(M,\m_n,\m_{n+1})$ is a homomorphism of one-step  Boolean doctrines from $(\P_n, \P_{n+1})$ to $(\R_{n}, \R_{n+1})$.
    
    We let $\QA$ denote the category whose objects are QA-stratified Boolean doctrines and whose morphisms are morphisms of QA-stratified Boolean doctrines.
\end{definition}

\begin{proposition}\label{p:equivalent-cat}
     The categories $\QFF$ and $\QA$ are equivalent. 
\end{proposition}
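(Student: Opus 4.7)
The plan is to upgrade the object-level bijection already established in \cref{p:equivalent} to a categorical equivalence by defining the functors on morphisms and checking compatibility. I will construct functors $F \colon \QFF \to \QA$ and $G \colon \QA \to \QFF$ that agree with the bijection of \cref{p:equivalent} on objects and are mutually quasi-inverse.

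On objects, $F$ sends $(\P_0, \P)$ to its induced QA-stratification $(\P_n)_{n\in\N}$, and $G$ sends $(\P_n)_{n \in \N}$ to the pair $(\P_0, \P)$ where $\P(X) \coloneqq \bigcup_{n\in\N}\P_n(X)$ as in \cref{p:equivalent}. On morphisms, $F$ acts as follows. Given a morphism $(M,\m) \colon (\P_0,\P) \to (\R_0,\R)$ of $\QFF$, I claim that for every $n \in \N$ the component $\m_X \colon \P(X) \to \R(M(X))$ restricts to a function $(\m_n)_X \colon \P_n(X) \to \R_n(M(X))$. I prove this by induction on $n$: the base case $n = 0$ is the defining property of a morphism of quantifier-free fragments, and the inductive step follows because $\P_{n+1}(X)$ is generated as a Boolean subalgebra by elements of the form $\fa{Y}{X}\beta$ with $\beta \in \P_n(X\times Y)$, and $\m$ preserves Boolean operations and commutes with the universal quantifier, so
\[
\m_X(\fa{Y}{X}\beta) \;=\; \fa{M(Y)}{M(X)}\m_{X \times Y}(\beta) \;\in\; \R_{n+1}(M(X))
\]
by the inductive hypothesis. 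Setting $F(M,\m) \coloneqq (M,(\m_n)_{n\in\N})$ therefore yields a well-defined sequence, and the one-step adjunction/Beck–Chevalley/commutativity squares required for a morphism in $\QA$ are exactly the restrictions of the corresponding squares for $(M,\m)$.

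The functor $G$ acts on morphisms by taking fiberwise unions: given $(M,(\m_n)_{n\in\N}) \colon (\P_n)_n \to (\R_n)_n$, the compatibility squares $\m_{n+1}\circ i_n = i_n\circ\m_n$ (part of the definition of a morphism of QA-one-step Boolean doctrines) guarantee that $\m \coloneqq \bigcup_{n\in\N}\m_n$ is a well-defined natural transformation $\P \to \R\circ M\op$. Its components are Boolean homomorphisms because each $\m_n$ is, and it commutes with the full universal quantifier because the full quantifier was built in \cref{p:equivalent} as $\fa{Y}{X} = \bigcup_{n\in\N}\fa{Y}{X,n}$ and each layer commutes with $\m_n$ by assumption. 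By construction $\m$ restricts on the 0th layer to $\m_0 \colon \P_0 \to \R_0 \circ M\op$, so $(M,\m)$ is a morphism in $\QFF$. Functoriality of both $F$ and $G$ (preservation of identities and composition) is immediate from the fact that restriction and fiberwise union respect componentwise composition of natural transformations.

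Finally, I verify that $GF = \id_{\QFF}$ and $FG = \id_{\QA}$ on the nose. For $(M,\m) \in \QFF$, applying $F$ gives the sequence of restrictions $(\m|_{\P_n})_n$, and applying $G$ re-unites these to recover $\m$ since $\P = \bigcup_n \P_n$. For $(M,(\m_n)_n) \in \QA$, applying $G$ yields $\m = \bigcup_n \m_n$, and restricting to $\P_n$ recovers $\m_n$. Combined with the object-level bijection of \cref{p:equivalent}, this gives mutually inverse isomorphisms of categories (in particular, an equivalence).

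The main obstacle is the inductive step showing that a morphism in $\QFF$ preserves each layer $\P_n$; once one realizes that $\P_{n+1}$ is Boolean-generated by universally quantified elements from $\P_n$ and that $\m$ already commutes with all the relevant operations, the argument reduces to a clean induction. Everything else is bookkeeping translating between the ``single map $\m$'' viewpoint of $\QFF$ and the ``sequence $(\m_n)_n$'' viewpoint of $\QA$, using the colimit description of $\P$ as the union of its layers.
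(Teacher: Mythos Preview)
Your proof is correct and follows essentially the same approach as the paper's: both construct mutually inverse functors by restriction/union of the natural transformation $\m$ along the layers, relying on \cref{p:equivalent} for the object-level correspondence. You supply more detail than the paper (which merely asserts well-definedness), in particular the clean induction showing that a $\QFF$-morphism restricts to each layer $\P_n$; this is exactly the verification the paper leaves implicit.
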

\begin{proof}
    The functor $U\colon\QFF\to\QA$ is defined on objects as in the proof of \cref{p:equivalent} (it defines the sequence and forgets the ambient doctrine), and maps a morphism $(M,\m)\colon (\P_0,\P)\to (\R_0,\R)$ to the pair $(M,(\m_{\mid\P_n})_{n\in\N})$, where, for every $n\in\N$, the natural transformation $\m_{\mid\P_n}\colon \P_n\to \R_n$ is componentwise the restriction of $\m$. It is easy to see that $U$ is well-defined. The functor $L\colon\QA\to \QFF$ is defined on objects as in the proof of \cref{p:equivalent} (it keeps the first entry of the sequence, and defines the ambient doctrine by taking the union of all the layers). The functor $L$ maps a morphism $(M, (\m_n)_{n\in\N})\colon (\P_n)_{n\in\N}\to (\R_n)_{n\in\N}$ to the pair $(M,\bigcup_{n\in\N}\m_{n})$.
    It is easy to see that $L$ is well-defined.
    \[\begin{tikzcd}[row sep=scriptsize, column sep=3.5em]
    	{\QFF} & \QA && \QA & {\QFF} \\
    	{(\P_0,\P)} & {(\P_n)_{n\in\N}} && {(\P_n)_{n\in\N}} & {(\P_0,\bigcup_{n\in\N}\P_n)} \\
    	\\
    	{(\R_0,\R)} & {(\R_n)_{n\in\N}} && {(\R_n)_{n\in\N}} & {(\R_0,\bigcup_{n\in\N}\R_n)}
    	\arrow["U", from=1-1, to=1-2]
    	\arrow["L", from=1-4, to=1-5]
    	\arrow[""{name=0, anchor=center, inner sep=0}, "{(M,\m)}"', from=2-1, to=4-1]
    	\arrow[""{name=1, anchor=center, inner sep=0}, "{(M, (\m_{\mid\P_n})_{n\in\N})}", from=2-2, to=4-2]
    	\arrow[""{name=2, anchor=center, inner sep=0}, "{(M, (\m_n)_{n\in\N})}"', from=2-4, to=4-4]
    	\arrow[""{name=3, anchor=center, inner sep=0}, "{(M,\bigcup_{n\in\N}\m_n)}", from=2-5, to=4-5]
    	\arrow[shorten <=15pt, shorten >=15pt, maps to, from=0, to=1]
    	\arrow[shorten <=15pt, shorten >=15pt, maps to, from=2, to=3]
    \end{tikzcd}\]
    By \cref{p:equivalent}, the composites $U\circ L$ and $L\circ U$ are the identities on objects, and it is easily seen that both composites are the identity on morphisms. Thus, $U$ and $L$ are mutually inverse functors.
\end{proof}

For any theory $\T$, the sequence $(\LT_n^{\T})_n$  is a QA-stratified Boolean doctrine.  
The converse holds, too: every QA-stratified Boolean doctrine over a category of contexts arises in this way: 

\begin{theorem}[Any QA-stratified Boolean doctrine is the stratification by QA-depth modulo some theory]\label{t:QA-makes-sense}
    Let $(\P_n\colon\ctx\op\to\BA)_{n\in\N}$ be a QA-stratified Boolean doctrine, where $\ctx$ is the category of contexts for a functional language $\mathbb{F}$.
    There are a relational language $\mathbb{P}$, a theory $\mathcal{T}$ in the language $(\mathbb{F},\mathbb{P})$ and an 
    isomorphism of QA-stratified Boolean doctrines between $(\P_n)_{n\in\N}$ and $(\LT_n^{\T})_{n\in\N}$.
\end{theorem}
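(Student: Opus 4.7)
The plan is to reduce the statement to \cref{t:it-all-makes-sense} via the equivalence of categories $\QFF \simeq \QA$ from \cref{p:equivalent-cat}. First, I would apply the functor $L\colon \QA\to\QFF$ to the given QA-stratified Boolean doctrine, obtaining a first-order Boolean doctrine $\P \coloneqq \bigcup_{n\in\N}\P_n$ over $\ctx$ together with its quantifier-free fragment $\P_0$. Since $\ctx$ is the category of contexts for a functional language $\mathbb{F}$, \cref{t:it-all-makes-sense} applies and yields a relational language $\mathbb{P}$, a theory $\T$ in $(\mathbb{F},\mathbb{P})$, and a first-order Boolean doctrine isomorphism $(\id_\ctx,\mathfrak{i})\colon \LT^{\T} \to \P$ such that $\mathfrak{i}$ restricts componentwise to a bijection $\LT_0^{\T}(\vec x)\to \P_0(\vec x)$ for every $\vec x\in\ctx$.

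Next, I would verify that this pair-isomorphism is automatically an isomorphism of the entire stratifications, i.e., that $\mathfrak{i}_{\vec x}$ restricts to a bijection $\LT_n^{\T}(\vec x) \to \P_n(\vec x)$ for every $n\in\N$ and every $\vec x\in\ctx$. This proceeds by induction on $n$. The base case $n=0$ is part of the conclusion of \cref{t:it-all-makes-sense}. For the inductive step, recall that $\LT_{n+1}^{\T}(\vec x)$ is the Boolean subalgebra of $\LT^{\T}(\vec x)$ generated by the elements $\fa{\vec y}{\vec x}\,[\alpha:(\vec x,\vec y)]_\T$ with $[\alpha:(\vec x,\vec y)]_\T \in \LT_n^{\T}(\vec x,\vec y)$, and similarly $\P_{n+1}(\vec x)$ is generated inside $\P(\vec x)$ by the analogous images of $\fa{\vec y}{\vec x}$ applied to $\P_n(\vec x,\vec y)$. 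Since $\mathfrak{i}$ preserves Boolean operations and quantifiers, the inductive hypothesis identifies the two generating sets and hence the two Boolean subalgebras they generate.

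Finally, applying the functor $U\colon\QFF\to\QA$ to the verified isomorphism $(\id_\ctx,\mathfrak{i})\colon (\LT_0^{\T},\LT^{\T})\to (\P_0,\P)$ of quantifier-free fragments yields the desired isomorphism of QA-stratified Boolean doctrines between $(\LT_n^{\T})_{n\in\N}$ and $(\P_n)_{n\in\N}$. The only delicate point is the inductive preservation of the stratification, but this is essentially forced by the recursive definition of $\P_{n+1}$ as the Boolean closure of quantifications of $\P_n$ together with the fact that $\mathfrak{i}$ is a first-order Boolean doctrine morphism; so the main work has already been done upstream, in \cref{p:equivalent-cat} and \cref{t:it-all-makes-sense}.
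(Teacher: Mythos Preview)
Your proposal is correct and takes essentially the same approach as the paper, which simply cites \cref{t:it-all-makes-sense} and \cref{p:equivalent-cat}. The inductive verification you spell out is precisely what is packaged into the well-definedness of the functor $U\colon\QFF\to\QA$ in the proof of \cref{p:equivalent-cat}, so once you invoke that equivalence there is nothing further to check.
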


\begin{proof}
    By \cref{t:it-all-makes-sense} and \cref{p:equivalent-cat}.
\end{proof}

\section{Boolean doctrines characterize quantifier-free fragments}\label{sec:char-qff}

In the previous section we captured sets of quantifier-free formulas modulo a theory via the notion of quantifier-free fragment.
In this section we answer the question: what is the intrinsic structure of a quantifier-free fragment?
In other words, what are the structures occurring as quantifier-free fragments of some first-order Boolean doctrine?

We prove that, under a smallness condition, these are precisely the Boolean doctrines: by definition, a quantifier-free fragment of a first-order Boolean doctrine is a Boolean doctrine; to prove the converse direction, we show that every Boolean doctrine $\P$ over a small base category embeds into a first-order Boolean doctrine $\H$. Then, we observe that $\P$ is a quantifier-free fragment of the first-order Boolean doctrine generated by the image of $\P$ in $\H$.

\begin{notation}
    Given a Boolean doctrine $\P \colon \C\op \to \BA$, $X \in \C$ and a complete Boolean algebra $B$, we denote by $\H_{X}^B \colon \C\op \to \BA$ the first-order Boolean doctrine that maps an object $Y \in \C$ to $B^{\Hom(X,Y)}$ and maps a morphism $f \colon Z \to Y$ to the function
    \begin{align*}
        B^{\Hom(X,Y)} & \longrightarrow B^{\Hom(X,Z)}\\
        h & \longmapsto h(f \circ -).
    \end{align*}
    Here, the universal and existential quantifiers are as follows: for $Y,Z\in\C$ and $g\in B^{\Hom(X,Y\times Z)}$ we have

    \begin{align*}
    \fa{Z}{Y}(g)\colon\Hom(X,Y)&\longrightarrow B\\ 
    f&\longmapsto\bigwedge_{h\colon X\to Z}g(\ple{f,h})
    \end{align*}
    \begin{align*}
    \ex{Z}{Y}(g)\colon\Hom(X,Y)&\longrightarrow B\\ 
    f & \longmapsto \bigvee_{h\colon X\to Z}g(\ple{f,h})
    \end{align*}
    (see e.g.\ \cite[Ex.~2.5(d)]{MaiPaRo} for the existential quantifier).
\end{notation}

Before starting the proof, we observe that, given a Boolean doctrine $\P$ over $\C$, $X \in \C$, a complete Boolean algebra $B$ and a Boolean homomorphism $f \colon \P(X) \to B$, we have a morphism $\m$ in $\DoctBA^\C$ from $\P$ to $\H^B_X$ whose component at $Y\in\C$ is defined as follows:
\begin{align}\label{eq:complete-ba}
        \m_Y \colon \P(Y) & \longrightarrow \H_X^B(Y) = B^{\Hom(X,Y)}\\
        \gamma & \longmapsto f(\P(-)(\gamma)).\notag
    \end{align}
Each component of $\m$ preserves Boolean operations since both $f$ and reindexings of $\P$ do.

\begin{theorem} \label{t:bd-embeds-fo}
    Every Boolean doctrine over a small category embeds into a first-order Boolean doctrine.
\end{theorem}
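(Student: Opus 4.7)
The plan is to embed $\P$ into a product, formed in $\DoctABA^\C$, of doctrines of the form $\H_X^{\mathbf{2}}$, where $\mathbf{2} = \{0,1\}$ is the two-element Boolean algebra. The key inputs are the explicit formulas for quantifiers in $\H_X^B$ displayed in the notation above, the construction \eqref{eq:complete-ba} of a Boolean doctrine morphism into $\H_X^B$ from any Boolean homomorphism $\P(X) \to B$, and Stone's representation theorem to guarantee enough such homomorphisms.

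First, I would verify that each $\H_X^B$ (for $B$ a complete Boolean algebra) is indeed a first-order Boolean doctrine. The fibers are power algebras $B^{\Hom(X,Y)}$, the reindexings are precompositions, and the universal and existential quantifiers are those listed in the notation. Checking the adjunction and the Beck–Chevalley condition reduces, after unfolding the definitions, to bookkeeping inside the complete Boolean algebra $B$ using the universal property of the product $Y \times Z$ in $\C$.

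Next, since $\C$ is small, the class of pairs $(X, f)$ with $X \in \C$ and $f \colon \P(X) \to \mathbf{2}$ a Boolean homomorphism is a set, so the product
\[
\mathbf{K} \coloneqq \prod_{(X, f)} \H_X^{\mathbf{2}}
\]
can be formed fiberwise in $\DoctABA^\C$: the universal quantifier of the product along a projection is the tuple of universal quantifiers in the factors, and Beck–Chevalley is preserved under products. For each $(X,f)$, \eqref{eq:complete-ba} furnishes a Boolean doctrine morphism $\m^{X,f} \colon \P \to \H_X^{\mathbf{2}}$, and the universal property of the product assembles these into a morphism $\m \colon \P \to \mathbf{K}$ in $\DoctBA^\C$.

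Finally, I would show that $\m$ is componentwise injective. Fix $Y \in \C$ and distinct $\gamma, \delta \in \P(Y)$. By Stone's theorem applied to the Boolean algebra $\P(Y)$, there is a Boolean homomorphism $f \colon \P(Y) \to \mathbf{2}$ with $f(\gamma) \neq f(\delta)$. Evaluating the $(Y,f)$-component of $\m_Y$ at the identity $\id_Y \in \Hom(Y,Y)$ yields $\m^{Y,f}_Y(\gamma)(\id_Y) = f(\P(\id_Y)(\gamma)) = f(\gamma)$ and analogously $f(\delta)$ for $\delta$; hence $\m_Y(\gamma) \neq \m_Y(\delta)$. The main obstacle I anticipate is the first step—confirming that each $\H_X^B$ is indeed a first-order Boolean doctrine, especially Beck–Chevalley—but this is routine and, as the paper notes, well known in the literature.
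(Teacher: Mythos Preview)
Your proof is correct and follows the same template as the paper: form a product of doctrines of the shape $\H_X^B$ and separate points by evaluating at identities. The difference is in the choice of complete Boolean algebra. The paper indexes only over objects $X\in\C$ and takes $B=\P(X)^\delta$, the canonical extension, using the single injective embedding $j^X\colon \P(X)\hookrightarrow\P(X)^\delta$ to obtain injectivity. You instead take $B=\mathbf{2}$, index over all pairs $(X,f)$ with $f\colon \P(X)\to\mathbf{2}$, and invoke Stone's representation theorem to find a separating $f$. The paper's version is advertised as constructive (it replaces the authors' earlier ``models-based'' argument, which was presumably close to yours), and it keeps the index set smaller; your version is perhaps more concrete, since $\H_X^{\mathbf{2}}$ is exactly the subset doctrine of the representable $\Hom(X,-)$, so you are literally embedding $\P$ into a product of subset doctrines. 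Either route works without difficulty.
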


\begin{proof}\footnote{We are grateful to the referee for having suggested this constructive proof.}
    Every Boolean algebra embeds into some complete Boolean algebra; for example, its canonical extension \cite{Gehrke2018}.
    For each $X \in \C$, we let $j^X\colon \P(X) \hookrightarrow \P(X) ^\delta$ denote the canonical extension of $\P(X)$.
   
    Using the smallness of $\C$ and the fact that $\DoctABA^\C$ is a variety, we define the first-order Boolean doctrine
    \[
    \H\coloneqq \prod_{X\in\C}\H^{\P(X) ^\delta}_X.
    \]
    We define a morphism $\mathfrak{i}\colon \P\to\H$ in $\DoctBA^\C$ by defining for every $X\in\C$ a morphism $\mathfrak{i}^X\colon \P\to\H^{\P(X) ^\delta}_X$. We do so as in \eqref{eq:complete-ba} above:
    \[(\mathfrak{i}^X)_Y(\gamma)=j^X(\P(-)(\gamma)),\]
    for every $Y\in\C$ and $\gamma\in\P(Y)$.
    
    We are left to prove that $\mathfrak{i}=(\mathfrak{i}^X)_{X\in\C}$ is injective.
    Let $X \in \C$.
    Let $\alpha, \beta \in \P(X)$ and suppose that the images of $\alpha$ and $\beta$ coincide, i.e.\ $\mathfrak{i}_X(\alpha) = \mathfrak{i}_X(\beta)$.
    In particular the $X$-th component of $\mathfrak{i}_X(\alpha)$ and $\mathfrak{i}_X(\beta)$ coincide, i.e.\ $(\mathfrak{i}^X)_X(\alpha) = (\mathfrak{i}^X)_X(\beta)$. Evaluating both of them at $\id_X$, we obtain
    \[j^X(\P(\id_X)(\alpha))=j^X(\P(\id_X)(\beta)),\]
    i.e.\ $j^X(\alpha)=j^X(\beta)$. Since $j^X$ is injective, we obtain $\alpha = \beta$, as desired.
\end{proof}

\begin{theorem}\label{t:character-0th-layer}
     Every Boolean doctrine over a small category is a quantifier-free fragment of some first-order Boolean doctrine.
\end{theorem}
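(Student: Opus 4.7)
The plan is to bootstrap the previous theorem: by \cref{t:bd-embeds-fo}, the given Boolean doctrine $\P\colon\C\op\to\BA$ admits a componentwise injective Boolean doctrine morphism $\mathfrak{i}\colon\P\hookrightarrow\H$ into some first-order Boolean doctrine $\H\colon\C\op\to\BA$. Since $\mathfrak{i}$ is a Boolean doctrine isomorphism onto its image $\mathfrak{i}(\P)$ (defined fiberwise by $\mathfrak{i}(\P)(X)\coloneqq\mathfrak{i}_X(\P(X))$), it is enough to exhibit a first-order Boolean sub-doctrine of $\H$ in which $\mathfrak{i}(\P)$ is a quantifier-free fragment.

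To this end I would carry out, inside $\H$, exactly the inductive construction dictated by \cref{d:whole-and-first-layer}\eqref{i:qff3}. Set $\P_0\coloneqq\mathfrak{i}(\P)$ and, for $n\geq 0$, define $\P_{n+1}(X)$ as the Boolean subalgebra of $\H(X)$ generated by the union of the images of the restrictions of the quantifiers $\fa{Y}{X}\colon\H(X\times Y)\to\H(X)$ to $\P_n(X\times Y)$, as $Y$ ranges over $\C$. Finally, put $\P'\coloneqq\bigcup_{n\in\N}\P_n$ fiberwise.

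The key step is to check that each $\P_n$, and hence $\P'$, is actually a subfunctor of $\H$, i.e.\ closed under reindexings. For $\P_0$ this is immediate from naturality of $\mathfrak{i}$. For the inductive step, by \cref{r:incl} the subfunctor $\P_{n+1}$ is stable under Boolean operations, so it suffices to show that a reindexing along $f\colon X'\to X$ of an element of the form $\fa{Y}{X}\alpha$ with $\alpha\in\P_n(X\times Y)$ again lies in $\P_{n+1}(X')$. This is exactly the content of the Beck-Chevalley condition in $\H$:
\[
\H(f)\bigl(\fa{Y}{X}\alpha\bigr) = \fa{Y}{X'}\bigl(\H(f\times\id_Y)(\alpha)\bigr),
\]
and the right-hand side belongs to $\P_{n+1}(X')$ by induction, since $\H(f\times\id_Y)(\alpha)\in\P_n(X'\times Y)$. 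Closure of $\P'$ under quantifiers is then automatic: if $\beta\in\P'(X\times Y)$ then $\beta\in\P_n(X\times Y)$ for some $n$, hence $\fa{Y}{X}\beta\in\P_{n+1}(X)\subseteq\P'(X)$.

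With these verifications in place, $\P'$ inherits the Beck-Chevalley condition from $\H$ and is therefore a first-order Boolean doctrine over $\C$. By construction $\P_0=\mathfrak{i}(\P)$ is a Boolean subfunctor of $\P'$, and the generation condition of \cref{d:whole-and-first-layer}\eqref{i:qff3} holds by definition of $\P'$ as $\bigcup_n\P_n$, so $\mathfrak{i}(\P)$ is a quantifier-free fragment of $\P'$. Transporting along the isomorphism $\P\cong\mathfrak{i}(\P)$ yields the desired statement. The only subtle point in this plan is the reindexing closure in the inductive step, which is where Beck-Chevalley for $\H$ is essential; everything else is routine.
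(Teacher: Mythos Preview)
Your proposal is correct and takes the same approach as the paper: embed $\P$ into a first-order Boolean doctrine $\H$ via \cref{t:bd-embeds-fo}, then observe that the image of $\P$ is a quantifier-free fragment of the first-order Boolean sub-doctrine of $\H$ it generates (the paper states this in one line, while you spell out the layer-by-layer construction and the Beck--Chevalley verification explicitly). One minor quibble: your appeal to \cref{r:incl} for ``stable under Boolean operations'' is misplaced---that closure holds simply because $\P_{n+1}(X)$ is \emph{defined} as a Boolean subalgebra, not by \cref{r:incl}.
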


\begin{proof}
    Let $\P$ be a Boolean doctrine over a small category.
    By \cref{t:bd-embeds-fo}, there is an embedding $i \colon \P \hookrightarrow \H$ into a first-order Boolean doctrine $\H$.
    Then, $\P$ is a quantifier-free fragment of the first-order Boolean doctrine generated by the image of $i$.
\end{proof}

This shows that, under a smallness assumption, the structures occurring as quantifier-free fragments are precisely the Boolean doctrines.

\section{Quantifier completion of a Boolean doctrine}\label{sec:quantifier compl}

Completions under quantifiers have been treated in various contexts (see, for example, \cite{Hofstra2006,Trotta2020}).
In this section, we show that every Boolean doctrine over a small category has a \emph{quantifier completion} (which freely adds quantifiers, see \cref{d:quantifier-completion}), of which, furthermore, it is a quantifier-free fragment.  

\begin{definition}[Quantifier completion] \label{d:quantifier-completion}
     Let $\P\colon \C\op\to\BA$ be a Boolean doctrine. A \emph{quantifier completion of $\P$} is a Boolean doctrine morphism $(I,\mathfrak{i})\colon \P \to \P^\EA$, where $\P^\EA\colon {\C'}\op\to\BA$ is a first-order Boolean doctrine, with the following universal property: for every first-order Boolean doctrine $\R \colon \D \op \to \BA$ and every Boolean doctrine morphism $(M,\m) \colon \P \to \R$ there is a unique first-order Boolean doctrine morphism $(N,\n) \colon \P^\EA \to \R$ such that $(M,\m) = (N,\n) \circ (I, \mathfrak{i})$.
    \[ 
        \begin{tikzcd}
             \P\arrow[r,"{(I,\mathfrak{i})}"',swap]\arrow[dr,"{(M,\m)}",swap] & \P^\EA\arrow[d,"{(N,\n)}"',dashed,swap]\\& \R
        \end{tikzcd}
    \]
\end{definition}

The idea to prove that every Boolean doctrine over a small category has a quantifier completion is that a first-order Boolean doctrine is defined by equations (see \cref{r:many-sorted}), and classes defined by equations have free algebras.

\begin{theorem} \label{t:left-adjoint}
    For any small category $\C$, the forgetful functor $U \colon \DoctABA^\C \to \DoctBA ^\C$ has a left adjoint. Moreover, the unit $\id_{\DoctBA^\C} \to UF$ is componentwise injective.
\end{theorem}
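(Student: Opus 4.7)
The plan is to leverage the many-sorted algebraic presentation of doctrines given in Remark~\ref{r:many-sorted}. There, $\DoctABA^\C$ is exhibited as a variety $\mathcal{V}$ of many-sorted algebras in a language $\L_\C$ whose sorts are the objects of $\C$ and whose function symbols comprise the Boolean operations on each sort, a unary symbol for each morphism of $\C$, and a unary quantifier symbol $\fa{Y}{X}^{\pr_1,\pr_2}$ for each binary product diagram in $\C$. Analogously, $\DoctBA^\C$ is the variety $\mathcal{V}_0$ in the sublanguage of $\L_\C$ obtained by removing all the quantifier symbols, and the forgetful functor $U$ is precisely the reduct functor $\mathcal{V} \to \mathcal{V}_0$ induced by this sublanguage inclusion.

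Existence of a left adjoint $F$ then reduces to a standard fact of many-sorted universal algebra: given an inclusion of small algebraic signatures, the reduct functor between the corresponding equationally defined classes admits a left adjoint. Concretely, for a Boolean doctrine $\P \in \DoctBA^\C$ one presents $F\P$ as the $\L_\C$-algebra freely generated by the disjoint union of the carriers $\P(X)$ (for $X\in\C$), modulo the axioms of $\mathcal{V}$ together with all the equations in the smaller language that already hold in $\P$. Smallness of $\C$ enters here to guarantee that the signature $\L_\C$, and hence the generating data, is a set, so that the free construction yields an honest algebra.

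For componentwise injectivity of the unit $\eta_\P \colon \P \to UF\P$, I would appeal to Theorem~\ref{t:bd-embeds-fo}: every Boolean doctrine $\P$ over a small category admits a componentwise injective morphism $i \colon \P \hookrightarrow U\H$ into some first-order Boolean doctrine $\H$. By the universal property of $\eta_\P$ there is a unique $\DoctABA^\C$-morphism $\tilde{\imath}\colon F\P \to \H$ with $U(\tilde{\imath}) \circ \eta_\P = i$; since $i$ is componentwise injective, $\eta_\P$ must be componentwise injective as well.

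The main conceptual content is thus packaged in Theorem~\ref{t:bd-embeds-fo}, which has already been established; modulo this, the proof is a routine invocation of many-sorted universal algebra. The one delicate point to check is that the reduct-has-left-adjoint principle genuinely applies to the two-tier signature at hand, and this is exactly what the smallness of $\C$ secures (both the set of sorts and the set of function symbols are small). An alternative, more explicit route — freely adjoining quantifier symbols $\fa{Y}{X}$ layer-by-layer while imposing adjunction and Beck--Chevalley at each step, and then passing to a directed colimit — would recover $F\P$ as a QA-stratified Boolean doctrine in the sense of Section~\ref{s:q-depth}, but the algebraic formulation is shorter for the present statement.
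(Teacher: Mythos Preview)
Your proposal is correct and follows essentially the same approach as the paper: the existence of $F$ is obtained from the general fact that forgetful functors between many-sorted varieties induced by a morphism of theories have left adjoints (the paper cites \cite[Prop.~9.3(2)]{AdamekRosickyEtAl2011}), and componentwise injectivity of the unit is deduced from Theorem~\ref{t:bd-embeds-fo} via the factoring argument you describe. Your write-up is more explicit about the role of smallness and the factoring step, but the content is the same.
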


\begin{proof}
    Any forgetful functor between (possibly many-sorted) varieties has a left adjoint.
    In fact, this holds for any functor between varieties induced by a morphism of theories; see, e.g.\ \cite[Prop.~9.3(2)]{AdamekRosickyEtAl2011}.
    This guarantees that $U$ has a left adjoint $F$.
    The fact that the unit is componentwise injective follows from the fact that every Boolean doctrine over $\C$ embeds into some first-order Boolean doctrine over $\C$ (\cref{t:bd-embeds-fo}).
\end{proof}

For any Boolean doctrine $\P$ over a small category $\C$, the universal property of $UF(\P)\colon \C\op \to \BA$ in \cref{t:left-adjoint} ranges over first-order Boolean doctrines over the same category $\C$.
We now show that this implies the universal property with respect to \emph{any} first-order Boolean doctrine, i.e.\ that $UF(\P) \colon \C\op \to \BA$ is a quantifier completion of $\P$ (\cref{t:quantif-complet} below). The following remark contains the key ingredient.

\begin{remark}[Change of base]\label{r:change-of-base}
    Let $\P,\R$ be Boolean doctrines and $(M,\m) \colon \P \to \R$ a Boolean doctrine morphism. We can factor $(M,\m)$ as the composition of two Boolean doctrine morphisms as follows:
    \[
    \begin{tikzcd}[row sep=30pt]
    	\C\op & \C\op & \D\op \\
    	& \BA.
    	\arrow[""{name=0, anchor=center, inner sep=0}, "\P"', from=1-1, to=2-2]
    	\arrow[""{name=1, anchor=center, inner sep=0}, "\R", from=1-3, to=2-2]
    	\arrow["\id_{\C}\op", from=1-1, to=1-2]
    	\arrow["M\op", from=1-2, to=1-3]
    	\arrow[""{name=2, anchor=center, inner sep=0}, "{\R \circ M\op}"{description}, from=1-2, to=2-2]
    	\arrow["\m", curve={height=-6pt}, shorten <=4pt, shorten >=4pt, from=0, to=2]
    	\arrow["\id", curve={height=-6pt}, shorten <=4pt, shorten >=4pt, from=2, to=1]
    \end{tikzcd}
    \]
    If $\R$ has quantifiers, it is easily seen that the Boolean doctrine $\R \circ M\op$ has quantifiers, too, and that the Boolean doctrine morphism $(M,\id) \colon \R \circ M\op \to \R$ in the factorization $(M,\m) = (M,\id)\circ (\id_{\C},\m)$ preserves them.
    Moreover, if also $\P$ has quantifiers and these are preserved by $(M,\m)$, then $(\id_{\C},\m)$ preserves them.
\end{remark}

\begin{theorem}\label{t:quantif-complet}
    Any Boolean doctrine $\P$ over a small category $\C$ has a quantifier completion, namely $(\id_\C,\eta_\P)\colon \P\to UF(\P)$, where $\eta$ is the (componentwise injective) unit of the adjunction $F\dashv U$ in \cref{t:left-adjoint}.
\end{theorem}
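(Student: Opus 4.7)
The plan is to reduce the universal property in \cref{d:quantifier-completion}, whose target $\R$ is a first-order Boolean doctrine over an \emph{arbitrary} base category, to the adjunction $F \dashv U$ of \cref{t:left-adjoint}, which only provides the universal property against first-order Boolean doctrines over the \emph{same} small category $\C$. The bridge between these two formulations is the change-of-base manoeuvre of \cref{r:change-of-base}.

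Concretely, given a Boolean doctrine morphism $(M, \m) \colon \P \to \R$, I would first invoke \cref{r:change-of-base} to factor it as $(M, \m) = (M, \id) \circ (\id_\C, \m)$ through the Boolean doctrine $\R \circ M\op \colon \C\op \to \BA$, which inherits quantifiers from $\R$ and is hit by the quantifier-preserving morphism $(M, \id)$. The second factor amounts to a natural transformation $\P \to U(\R \circ M\op)$, and the adjunction then produces a unique quantifier-preserving morphism $(\id_\C, \tilde{\m}) \colon F(\P) \to \R \circ M\op$ with $\tilde{\m} \circ \eta_\P = \m$. Composing yields $(M, \tilde{\m}) \coloneqq (M, \id) \circ (\id_\C, \tilde{\m}) \colon UF(\P) \to \R$ as the desired extension along $(\id_\C, \eta_\P)$.

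For uniqueness, any quantifier-preserving $(N, \n) \colon UF(\P) \to \R$ satisfying $(N, \n) \circ (\id_\C, \eta_\P) = (M, \m)$ must have $N = M$ by comparing base functors; then $\n$ can be reinterpreted as a natural transformation $UF(\P) \to \R \circ M\op$, which by the ``moreover'' clause of \cref{r:change-of-base} preserves quantifiers. Since it also extends $\m$ along $\eta_\P$, uniqueness in the adjunction identifies it with $\tilde{\m}$. The construction itself is a two-line composition; the main care needed is bookkeeping, tracking the two guises of $\n$ (once valued in $\R$ over $N\op$, once in $\R \circ M\op$ over $\id_\C$) and verifying that \cref{r:change-of-base} applies in both directions. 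I do not foresee any obstacle beyond these routine verifications.
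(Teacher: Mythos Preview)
Your proposal is correct and follows essentially the same route as the paper: factor $(M,\m)$ through $\R \circ M\op$ via \cref{r:change-of-base}, apply the adjunction of \cref{t:left-adjoint} to obtain the unique lift over $\C$, then compose with $(M,\id)$; uniqueness is handled exactly as you describe, by reducing $N$ to $M$ and invoking the ``moreover'' clause of \cref{r:change-of-base} to feed the competitor back into the adjunction's uniqueness.
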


\begin{proof}
    Let $\R \colon \D \op \to \BA$ be a first-order Boolean doctrine and let $(M,\m) \colon \P \to \R$ be a Boolean doctrine morphism.
    We shall prove that there is a unique first-order Boolean doctrine morphism $(N,\n) \colon UF(\P) \to \R$ such that $(M,\m) = (N,\n) \circ (\id_\C, \eta_\P)$.
    \[ 
        \begin{tikzcd}
             \P\arrow[r,"{(\id_\C, \eta_\P)}"',swap]\arrow[dr,"{(M,\m)}",swap] & UF(\P)\arrow[d,"{(N,\n)}"',dashed,swap]\\& \R
        \end{tikzcd}
    \]
    
    By \cref{r:change-of-base}, we can factor $(M,\m) = (M,\id) \circ (\id_{\C},\m)$, where $(\id_{\C},\m) \colon \P \to \R \circ M\op$ is a Boolean doctrine morphism and $(M,\id) \colon \R \circ M\op \to \R$ a first-order Boolean doctrine morphism.
    Note that the composite $\R \circ M\op \colon \C\op \to \BA$ has $\C$ as base category and so it belongs to $\DoctABA^\C$.
    By the universal property of $\eta_\P \colon \P \to UF(\P)$ with respect to the morphism $\m \colon \P \to \R \circ M\op$ in $\DoctBA^\C$, there is a unique morphism $\n \colon UF(\P) \to \R \circ M\op$ in $\DoctABA^\C$.
    Then $(M,\n) = (M,\id) \circ (\id_\C,\n)$ is a first-order Boolean doctrine morphism making the outer triangle below commute.
    \[
    \begin{tikzcd}[row sep=2.25em]
         \P\arrow[r,"{(\id_{\C},\eta_\P)}"',swap]\arrow[dr,"{(\id_{\C},\m)}"{description},swap]\arrow[ddr,bend right,"{(M,\m)}"'] & UF(\P)\arrow[d,"{(\id_{\C},\n)}"',dashed,swap]\\& \R \circ M\op \arrow[d,"{(M,\id)}"]\\& \R
    \end{tikzcd}
    \]
    
    We conclude by proving uniqueness. Let $(N',\n')\colon UF(\P) \to \R$ be a first-order Boolean doctrine morphism such that $(M,\m)=(N',\n')\circ(\id_\C,\eta_\P)$. Observe that $N' = N' \circ \id_\C = M$. Moreover, we can factor $(N',\n') = (M,\n')$ as $(M,\id)\circ(\id_\C,\n')$, so it is enough to prove that $(\id_\C,\n)=(\id_\C,\n')$. By \cref{r:change-of-base}, $(\id_\C,\n')$ is a first-order Boolean doctrine morphism, and thus $\n'\colon UF(\P)\to \R\circ M\op$ is a morphism in $\DoctABA^\C$. By the universal property of $\eta_\P$,
    the equality $\n=\n'$ holds if and only if the equality $\n\circ \eta_\P=\n'\circ\eta_\P$ holds, but the latter follows from the equality $(M,\m) = (M,\n') \circ (\id_\C,\eta_\P)$. 
\end{proof}

\begin{corollary} \label{c:left-adjoint-small}
    The forgetful functor from the category of first-order Boolean doctrines with a small base category to the category of Boolean doctrines with a small base category has a left adjoint.
\end{corollary}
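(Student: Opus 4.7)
The plan is to derive the corollary directly from the universal property already established in \cref{t:quantif-complet}. The putative left adjoint $G$ sends a Boolean doctrine $\P$ over a small base $\C$ to $UF(\P)$, where $F \dashv U$ is the fixed-base adjunction of \cref{t:left-adjoint}. This is well-defined because the left adjoint $F$ preserves the base category: if $\C$ is small, then so is the base of $F(\P)$.

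To define $G$ on morphisms, I would invoke \cref{t:quantif-complet}. Given a Boolean doctrine morphism $(M,\m) \colon \P \to \R$ with $\P$ and $\R$ over small bases $\C$ and $\D$, the composite
\[
\P \xrightarrow{(M,\m)} \R \xrightarrow{(\id_\D,\eta_\R)} UF(\R)
\]
is a Boolean doctrine morphism from $\P$ to the underlying Boolean doctrine of the first-order Boolean doctrine $UF(\R)$. By the universal property of $(\id_\C, \eta_\P) \colon \P \to UF(\P)$ in \cref{t:quantif-complet}, there is a unique first-order Boolean doctrine morphism $G(M,\m) \colon UF(\P) \to UF(\R)$ through which this composite factors. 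Functoriality of $G$ (preservation of identities and composition) then follows from the uniqueness clause of that universal property, by a standard argument.

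Finally, the unit of the adjunction is the family $(\id_\C, \eta_\P) \colon \P \to UG(\P)$, which is natural in $\P$ by the very definition of $G$ on morphisms. The hom-set bijection
\[
\Hom_{\DoctABA}(G(\P), \R) \;\cong\; \Hom_{\DoctBA}(\P, U\R),
\]
natural in both variables, is precisely the content of \cref{t:quantif-complet} applied to an arbitrary first-order Boolean doctrine $\R$ with small base category. Since \cref{t:quantif-complet} holds for $\R$ of \emph{any} base (not only small), the restriction to small bases on both sides is harmless. I do not anticipate any substantive obstacle: all the real work has been done in \cref{t:left-adjoint} and \cref{t:quantif-complet}, and the corollary is a matter of assembling these results into the form of an adjunction between the globally-defined categories, taking care that the smallness constraints on source and target are compatible with the base-preserving nature of $F$.
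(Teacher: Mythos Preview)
Your proposal is correct and follows the same approach as the paper, which treats the corollary as immediate from \cref{t:quantif-complet} without an explicit proof: the universal property of the quantifier completion is precisely the statement that $(\id_\C,\eta_\P)$ is a universal arrow from $\P$ to the forgetful functor, and you have simply spelled out the standard passage from universal arrows to a left adjoint.
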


While \cref{c:left-adjoint-small} has a more compact formulation than \cref{t:quantif-complet}, it is, a priori, slightly weaker.
Indeed, while the universal property of the unit in \cref{c:left-adjoint-small} ranges over first-order Boolean doctrines over a \emph{small} category, the universal property of the quantifier completion in \cref{t:quantif-complet} ranges over \emph{all} first-order Boolean doctrines (including more cases, as for instance the subset doctrine).

\begin{remark}
    Requiring a smallness condition for the existence of the quantifier completion is sensible.
    Indeed, the fiber over $X\in\C$ of the hypothetical quantifier completion of a Boolean doctrine $\P \colon \C\op \to \BA$ must be a Boolean algebra (and hence a \emph{set}), and yet it must contain all elements of the form $\fa{Y}{X} \alpha$ with $\alpha \in \P(X \times Y)$ and $Y$ ranging over \emph{all} objects of $\C$. 
    Smallness of $\C$ avoids size issues.
\end{remark}

We recall that a \emph{universal formula} is a formula of the form $\forall x_1\,\dots\,\forall x_n\,\alpha(x_1,\dots,x_n,y_1,\dots,y_m)$ with $\alpha$ quantifier-free, that a \emph{universal sentence} is the universal closure of a quantifier-free formula, i.e.\ a universal formula with no free variables, and that a \emph{universal theory} is a theory consisting of universal sentences.

\begin{proposition}[$\LT^\T$ is the quantifier completion of $\LT_0^\T$, for $\T$ universal]\label{ex:sanitycheck}
    Let $\T$ be a universal theory.
    The pair $(\id_{\ctx},i)\colon\LT^\T_0\to\LT^\T$ (where $i$ is the componentwise inclusion) is a quantifier completion of $\LT^\T_0$ (= the subfunctor of $\LT^\T$ of quantifier-free formulas modulo $\T$, see \cref{ex:motivating-strat-fragment}).
\end{proposition}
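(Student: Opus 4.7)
The plan is to verify the universal property of the quantifier completion directly, by extending an arbitrary Boolean doctrine morphism $(M,\m)\colon \LT^\T_0 \to \R$ (with $\R$ a first-order Boolean doctrine) to a first-order Boolean doctrine morphism $(N,\n)\colon \LT^\T \to \R$ in a unique way. Uniqueness is free: since $\LT^\T_0$ is a quantifier-free fragment of $\LT^\T$ (\cref{ex:motivating-strat-fragment}), the underlying functor $\LT^\T$ is generated by $\LT^\T_0$ under substitutions, Boolean operations and quantifiers, so any first-order Boolean doctrine morphism out of $\LT^\T$ is determined by its restriction to $\LT^\T_0$; in particular $N = M \circ \id_\ctx = M$ and $\n$ is forced.

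For existence, I would first apply the change-of-base trick from \cref{r:change-of-base} to factor $(M,\m) = (M,\id)\circ(\id_\ctx,\m)$, where $(\id_\ctx,\m)\colon \LT^\T_0 \to \R\circ M\op$ is a Boolean doctrine morphism over the same base $\ctx$ and $(M,\id)\colon \R\circ M\op \to \R$ is a first-order Boolean doctrine morphism. It then suffices to extend $(\id_\ctx,\m)$ to a first-order Boolean doctrine morphism $(\id_\ctx,\n)\colon \LT^\T \to \R\circ M\op$, since composing with $(M,\id)$ yields the desired $(M,\n)$.

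To produce this extension, I would invoke the universal property of $\LT^\T$ (\cref{p:univ-prop-ltlt}). Define, for each $n\in\N$, the interpretation $\mathbb{I}_n\colon \mathbb{P}_n \to (\R\circ M\op)(x_1,\dots,x_n)$ by $\mathbb{I}_n(R) \coloneqq \m_{(x_1,\dots,x_n)}([R(x_1,\dots,x_n):(x_1,\dots,x_n)]_\T)$, and let $\mathcal{I}$ be its extension to all formulas in context as in \cref{r:int-fmlas}. An easy induction on the complexity of a quantifier-free formula $\beta$ (using naturality of $\m$ on substitutions for the atomic case, and that $\m$ is componentwise a Boolean homomorphism for the Boolean cases) shows the key identity
\[
    \mathcal{I}(\beta:\vec x) \;=\; \m_{\vec x}([\beta:\vec x]_\T) \quad \text{ for every quantifier-free } \beta:\vec x.
\]

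The decisive step, where universality of $\T$ enters, is verifying $\mathcal{I}(\alpha:()) = \top$ for every $\alpha \in \T$. Since $\T$ is universal, such an $\alpha$ has the form $\forall\vec x\,\beta(\vec x)$ with $\beta$ quantifier-free, so $\mathcal{I}(\alpha:()) = \fa{\vec x}{()}\mathcal{I}(\beta:\vec x)$. From $[\forall\vec x\,\beta:()]_\T = [\top:()]_\T$ in $\LT^\T$, applying the adjunction $\LT^\T(\pr_1)\dashv \fa{\vec x}{()}$ in the syntactic doctrine gives $[\beta:\vec x]_\T = [\top:\vec x]_\T$, an equality that already holds in $\LT^\T_0(\vec x)$ because both sides are quantifier-free. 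Combining this with the identity above yields $\mathcal{I}(\beta:\vec x) = \m_{\vec x}([\top:\vec x]_\T) = \top$, and then the adjunction in $\R\circ M\op$ gives $\fa{\vec x}{()}\top = \top$, as required. By \cref{p:univ-prop-ltlt} the family $(\mathbb{I}_n)_n$ induces a first-order Boolean doctrine morphism $(\id_\ctx,\n)\colon \LT^\T \to \R\circ M\op$ whose restriction to $\LT^\T_0$ is $\m$, completing the argument. The only subtle point — and the one where the hypothesis of universality of $\T$ is essential — is precisely the reduction of the axioms of $\T$ to assertions about quantifier-free formulas.
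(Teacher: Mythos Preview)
Your proof is correct and follows essentially the same approach as the paper. The only organisational difference is that the paper first verifies the universal property only for morphisms of the form $(\id_\ctx,\m)$ and then appeals to \cref{t:quantif-complet} (whose proof contains exactly the change-of-base reduction you spelled out) to pass to the general case, whereas you inline that reduction; your uniqueness argument via generation is a valid alternative to the paper's appeal to the bijection in \cref{p:univ-prop-ltlt}.
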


\begin{proof}
    The idea is that the identifications of formulas happening in $\LT^\T$ are already encoded in $\LT_0^\T$, since $\T$ is universal.
    
    Let $\R\colon\ctx\op\to\BA$ be a first-order Boolean doctrine and $(\id_{\ctx},\m)\colon\LT^\T_0\to\R$ a Boolean doctrine morphism. We look for a first-order Boolean doctrine morphism $\LT^\T\to\R$.
    Let us consider the family of functions $(\mathbb{I}_n\colon \mathbb{P}_n\to\R(x_1,\dots,x_n), \,R\mapsto \m_{\vec x}([R(\vec x):\vec x]_\T))_n$, and let $\mathcal{I}$ be its extension as in \cref{r:int-fmlas}. It can be easily proved by induction on the complexity of the formulas that for every quantifier-free formula $\alpha:\vec x$ we have $\mathcal{I}(\alpha:\vec x)=\m_{\vec x}([\alpha:\vec x]_\T)$.

    Let $\varphi\in\T$, and let us prove that $\mathcal{I}(\varphi:())=\top_{\R()}$. Since $\T$ is universal, we have $\varphi=\forall\vec y\, \psi$, with $\psi:\vec y$ a quantifier-free formula. By adjunction, from $[\top:()]_\T\leq [\forall\vec y\,\psi:()]_\T$ in $\LT^\T()$ it follows that $[\top:\vec y]_\T\leq [\psi:\vec y]_\T$ in $\LT^\T(\vec y)$. We compute $\mathcal{I}(\varphi:())$:
    \[
    \mathcal{I}(\forall \vec y\,\psi:())= \fa{\vec y}{()}\,\mathcal{I}(\psi:\vec y)=\fa{\vec y}{()}\m_{\vec y}([\psi:\vec y]_\T)=\fa{\vec y}{()}\m_{\vec y}([\top:\vec y]_\T)=\fa{\vec y}{()}(\top_{\R(\vec y)})=\top_{\R()}.
    \]
    By \cref{p:univ-prop-ltlt}, we define $(\id_{\ctx},\n)\colon \LT^\T \to \R$ as the first-order Boolean doctrine morphism associated to the family $(\mathbb{I}_n)_n$, with $\n_{\vec x}([\alpha:\vec x]_\T)=\mathcal{I}(\alpha:\vec x)$ for every $[\alpha:\vec x]_\T\in\LT^\T(\vec x)$. 
    Clearly,
    $(\id_{\ctx},\m) = (\id_{\ctx},\n) \circ (\id_{\ctx}, {i})$.
    
    To prove uniqueness, let $(\id_{\ctx},\n')\colon \LT^\T\to\R$ be a first-order Boolean doctrine morphism such that $(\id_{\ctx},\m) = (\id_{\ctx},\n') \circ (\id_{\ctx}, {i})$. Under the bijection in \cref{p:univ-prop-ltlt}, this morphism corresponds to the family $(\mathbb{I}'_n\colon\mathbb{P}_n\to\R(\vec x),\, R \mapsto \n'_{\vec x}([R(\vec x):\vec x]_\T))_{n}$. Since $R(\vec x):\vec x$ is a quantifier-free formula, we have
    \[
    \n'_{\vec x}([R(\vec x):\vec x]_\T)=\n'_{\vec x}(i[R(\vec x):\vec x]_\T)=\m_{\vec x}([R(\vec x):\vec x]_\T)=\mathcal{I}(R(\vec x):\vec x)=\mathbb{I}_n(R),
    \]
    and thus $(\mathbb{I}'_n)_n=(\mathbb{I}_n)_n$, and hence $(\id_{\ctx},\n')=(\id_{\ctx},\n)$.

    By \cref{t:quantif-complet}, $(\id_{\ctx},i)\colon\LT^\T_0\to\LT^\T$ is a quantifier completion of $\LT^\T_0$.
\end{proof}

\begin{corollary}
    \label{ex:quantif-compl-lt}
    Let $\T$ be a first-order theory.
    The quantifier completion of $\LT^\T_0$ is isomorphic to the syntactic doctrine $\LT^{\mathcal{U}}$, where $\mathcal{U}$ is the theory in the same language of $\T$ whose axioms are all universal sentences derivable from $\T$.
\end{corollary}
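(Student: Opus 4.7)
The plan is to identify the quantifier-free fragments $\LT^\T_0$ and $\LT^{\mathcal{U}}_0$ as equal subdoctrines of (distinct) syntactic doctrines, and then invoke \cref{ex:sanitycheck}, which already tells us that $\LT^{\mathcal{U}}$ is a quantifier completion of $\LT^{\mathcal{U}}_0$, since $\mathcal{U}$ is universal by construction. Uniqueness of the quantifier completion (up to isomorphism) then forces $(\LT^\T_0)^\EA\cong \LT^{\mathcal{U}}$.

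The main content is therefore the identification $\LT^\T_0=\LT^{\mathcal{U}}_0$. The underlying sets are the same because $\T$ and $\mathcal{U}$ are written in the same language, and ``quantifier-free formula in context $\vec{x}$'' is a purely syntactic notion. What needs verification is that the two orderings on these sets coincide, i.e.\ that for quantifier-free $\alpha,\beta$ with free variables in $\vec{x}$,
\[
    \alpha \Rightarrow_{\vec{x}} \beta \text{ is derivable from } \T \iff \alpha \Rightarrow_{\vec{x}} \beta \text{ is derivable from } \mathcal{U}.
\]
The implication $\Leftarrow$ is immediate since every axiom of $\mathcal{U}$ is derivable from $\T$. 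For $\Rightarrow$, assume $\T\vdash \alpha \Rightarrow_{\vec x}\beta$. Using the sequent-calculus rules for implication and the universal quantifier (applied in reverse, closing over the context $\vec x$), this is equivalent to $\T\vdash \Rightarrow_{()} \forall \vec x\,(\alpha\to\beta)$. Since $\alpha,\beta$ are quantifier-free, $\forall \vec x\,(\alpha\to\beta)$ is a universal sentence derivable from $\T$, and hence belongs to $\mathcal{U}$ by definition. Reversing the syntactic manipulations shows $\mathcal{U}\vdash \alpha\Rightarrow_{\vec x}\beta$, as desired.

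Having established $\LT^\T_0=\LT^{\mathcal{U}}_0$ as Boolean doctrines over $\ctx$, the conclusion follows by combining this with \cref{ex:sanitycheck}: the morphism $(\id_{\ctx},i)\colon \LT^{\mathcal{U}}_0\to \LT^{\mathcal{U}}$ is a quantifier completion of $\LT^{\mathcal{U}}_0=\LT^\T_0$, and any two quantifier completions of the same Boolean doctrine are canonically isomorphic, yielding $(\LT^\T_0)^\EA\cong \LT^{\mathcal{U}}$.

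The one place where care is required is the passage between the sequent $\alpha\Rightarrow_{\vec x}\beta$ and the universal sentence $\forall \vec x\,(\alpha\to\beta)$; this relies on the admissibility of universal generalization over context variables in the sequent calculus with contexts (\cref{app:calculus}), together with the deduction theorem linking $\Rightarrow$ and $\to$. No other subtlety is expected, since the rest is purely formal manipulation of definitions.
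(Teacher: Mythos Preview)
Your proof is correct and follows essentially the same route as the paper: both arguments reduce to showing $\LT^\T_0\cong\LT^{\mathcal{U}}_0$ via the observation that for quantifier-free $\alpha,\beta$ the sequent $\alpha\Rightarrow_{\vec x}\beta$ is $\T$-provable iff it is $\mathcal{U}$-provable (using the universal closure $\forall\vec x\,(\alpha\to\beta)$), and then invoke \cref{ex:sanitycheck}. The only cosmetic difference is that the paper phrases the identification as an explicit map $[\alpha]_{\mathcal{U}}\mapsto[\alpha]_{\T}$ and checks it is an order-isomorphism, whereas you speak of the two preorders on quantifier-free formulas coinciding; these amount to the same thing.
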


\begin{proof}
    
    To prove it, it is enough to show that $\LT^\T_0\cong\LT^{\mathcal{U}}_0$; since $\mathcal{U}$ is a universal theory, it then follows from \cref{ex:sanitycheck} that the quantifier completion of $\LT_0^\T$ is $\LT^{\mathcal{U}}$.

    Each component of the natural transformation $k\colon\LT^{\mathcal{U}}_0\to \LT^\T_0$ maps $[\alpha]_{\mathcal{U}}$ to $[\alpha]_{\T}$. This is the restriction of the natural transformation $\LT^{\mathcal{U}}\to\LT^{\T}$ induced by the inclusion of $\mathcal{U}$ in the deductive closure of $\T$ (see \cref{p:quotient-extension}). The restriction is well-defined because if $\alpha\dashv\vdash_{\mathcal{U}}\alpha'$ with $\alpha'$ quantifier-free, then $\alpha\dashv\vdash_{\T}\alpha'$. Let $\alpha$ be in some fiber of $\LT_0^\T$, i.e. $\alpha\dashv\vdash_{\T}\alpha'$ with $\alpha'$ quantifier-free. Then $k$ sends $[\alpha']_{\mathcal{U}}$ to $[\alpha']_{\T}=[\alpha]_{\T}$, and thus $k$ is componentwise surjective. Now, let $[\alpha:\vec x]_{\mathcal{U}},[\beta:\vec x]_{\mathcal{U}}\in\LT^{\mathcal{U}}_0(\vec x)$, let $\alpha':\vec x$ and $\beta':\vec x$ be quantifier-free formulas such that $\alpha\dashv\vdash_{\mathcal{U}}\alpha'$ and $\beta\dashv\vdash_{\mathcal{U}}\beta'$, and suppose $\alpha\vdash_\T\beta$. Then $\alpha'\vdash_\T\beta'$, so $\vdash_\T\forall\vec x\,(\alpha'\to\beta')$, i.e.\ $\forall\vec x\,(\alpha'\to\beta')\in\mathcal{U}$, so $\alpha'\vdash_{\mathcal{U}}\beta'$, hence $\alpha\vdash_{\mathcal{U}}\beta$, thus $k$ componentwise reflects the order, and so it is componentwise injective.
\end{proof}

\begin{remark}[Comparison with the existential completion of a primary doctrine]\label{r:trotta}
    We note here that the quantifier completion of a Boolean doctrine $\P$ may differ from the existential completion (in the sense of \cite{Trotta2020}) of $\P$ seen as a primary doctrine.
    We give more details for the interested reader.

    A \emph{primary doctrine} is a functor $\P \colon \C\op \to \mathsf{InfSL}$ where $\C$ is a category with finite products and $\mathsf{InfSL}$ is the category of inf-semilattices.
    An \emph{existential doctrine} is a primary doctrine $\P \colon \C\op \to \mathsf{InfSL}$ such that the monotone function $\P(\pr_1)\colon \P(X)\to\P(X\times Y)$ has a left adjoint $\ex{Y}{X}$ satisfying the Beck-Chevalley condition and the Frobenius reciprocity.
    The existential completion of a primary doctrine $\P$ consists of an existential (and thus primary) doctrine $\P^e$ and a primary doctrine morphism $(\id_\C,\iota)\colon \P \to \P^e$ satisfying a suitable universal property.
    Roughly speaking, $\P^e$ freely adds the existential quantifier to $\P$.

    The difference between the existential completion of a primary doctrine and the quantifier completion of a Boolean doctrine is that in the former case there is no alternation of quantifiers, and so the existential completion can be done in one step with a simple description: formulas in the existential completion of $\P$ can be described very explicitly as $\exists \vec x \, \alpha(\vec x,\vec y)$, with $\alpha \in \P(\vec{x},\vec{y})$.
    
    Such a simple description of the quantifier completion is not directly available in our setting, because formulas can have arbitrarily large quantifier alternation depth.
    One way to still get a simple description might be obtained by adding layers of quantifier alternation depth one at a time (see \cite{AbbadiniGuffanti2} for some partial results in this direction).
    Indeed, part of the motivation for our study of QA-stratified Boolean doctrines is our desire to obtain a step-by-step description of the quantifier completion.
\end{remark}

\begin{theorem} \label{p:qff-of-completion}
    Every Boolean doctrine over a small category is a quantifier-free fragment of its quantifier completion.
\end{theorem}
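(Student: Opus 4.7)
Let $\P \colon \C\op \to \BA$ be a Boolean doctrine over a small category $\C$, and let $(\id_\C, \eta_\P) \colon \P \to UF(\P)$ be its quantifier completion as in \cref{t:quantif-complet}. By \cref{t:left-adjoint}, the unit $\eta_\P$ is componentwise injective, so I may identify $\P$ with its image $\P_0 \coloneqq \eta_\P(\P)$ inside $UF(\P)$; this image is a Boolean subdoctrine (since $\eta_\P$ is componentwise a Boolean homomorphism and natural), which handles the subfunctor condition \cref{d:whole-and-first-layer}\eqref{i:qff1}. What remains is the generation condition \cref{d:whole-and-first-layer}\eqref{i:qff3}, i.e.\ that the subfunctors $\P_n$ built inductively as in \cref{d:whole-and-first-layer} exhaust $UF(\P)$.

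The plan is to verify that the fiberwise union $\R(X) \coloneqq \bigcup_{n \in \N} \P_n(X)$ defines a first-order Boolean subdoctrine of $UF(\P)$, and then use the universal property of the quantifier completion to show $\R = UF(\P)$. Closure of $\R$ under Boolean operations is immediate from the definition of each $\P_{n+1}$ as a Boolean subalgebra. Closure under reindexings requires an inductive argument: assuming $\P_n$ is a subfunctor, for $\beta \in \P_n(X \times Y)$ and $f \colon X' \to X$ the Beck-Chevalley condition in $UF(\P)$ gives
\[
UF(\P)(f)(\fa{Y}{X}\beta) \;=\; \fa{Y}{X'}\bigl(UF(\P)(f \times \id_Y)(\beta)\bigr) \;\in\; \P_{n+1}(X'),
\]
so each $\P_{n+1}$ (being generated by such elements) is also a subfunctor. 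Closure under quantifiers is by construction: if $\beta \in \R(X \times Y)$, say $\beta \in \P_n(X \times Y)$, then $\fa{Y}{X}\beta \in \P_{n+1}(X) \subseteq \R(X)$. Adjunction and Beck-Chevalley for $\R$ are inherited from $UF(\P)$, so $\R$ is indeed a first-order Boolean doctrine.

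The key final step is then purely formal. The Boolean doctrine morphism $\eta_\P \colon \P \to \R$ (well-defined since $\eta_\P(\P) = \P_0 \subseteq \R$) factors, by the universal property of $UF(\P)$ applied to the first-order Boolean doctrine $\R$, through a unique first-order Boolean doctrine morphism $\phi \colon UF(\P) \to \R$. Composing with the inclusion $\iota \colon \R \hookrightarrow UF(\P)$ yields a first-order Boolean doctrine endomorphism $\iota \circ \phi$ of $UF(\P)$ satisfying $(\iota \circ \phi) \circ (\id_\C,\eta_\P) = (\id_\C,\eta_\P)$. Since $\id_{UF(\P)}$ also has this property, the uniqueness clause in the universal property of $(\id_\C,\eta_\P) \colon \P \to UF(\P)$ forces $\iota \circ \phi = \id_{UF(\P)}$. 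Hence $\iota$ is surjective and $\R = UF(\P)$, which is exactly the generation condition.

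The only non-formal step is the inductive verification that $\R$ is closed under reindexings, where Beck-Chevalley is essential; everything else is a clean application of the universal property, together with the crucial input from \cref{t:left-adjoint} that $\eta_\P$ is componentwise injective (which in turn relies on \cref{t:bd-embeds-fo}).
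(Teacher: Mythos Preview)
Your proof is correct and follows essentially the same approach as the paper. The paper's proof is terser: it invokes as a ``standard fact'' that the image of the unit of an adjunction whose right adjoint is a forgetful functor between varieties is generating, whereas you unpack this fact explicitly by building the sub-first-order-Boolean-doctrine $\R$ and using the universal property to force $\R = UF(\P)$. Your inductive verification of closure under reindexings via Beck--Chevalley is exactly the content of the paper's earlier remark that a quantifier-free fragment is the same as a generating Boolean subdoctrine.
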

\begin{proof}
    Let $\P$ be a Boolean doctrine over a small category $\C$.
    By \cref{t:quantif-complet}, we know that $\P$ has a quantifier completion, namely $(\id_\C,\eta_\P)\colon \P\to UF(\P)$, where $\eta$ is the componentwise injective unit of the adjunction $F\dashv U$ in \cref{t:left-adjoint}.
    It is a standard fact that the image of each component of the unit of an adjunction having as right adjoint a forgetful functor between varieties of algebras is generating.
    Therefore, the image of $\eta_\P$ generates $UF(\P)$.
    Hence, $\P$ is a quantifier-free fragment of $UF(\P)$.
\end{proof}

\section{The case with equality} \label{s:equality}

What happens if the language has equality?
In this section, we adapt the main definitions and results of the previous sections to the setting with equality.
Concerning definitions, essentially we just need to require quantifier-free fragments to contain equalities of terms (\cref{d:whole-and-first-layer=}).
Concerning results, a pleasant realization is
that the situation does not change much: in particular, a quantifier completion of a Boolean doctrine with equality has equality (\cref{c:equality-only-p0}).

We start by recalling a modern definition of elementarity in the context of Boolean doctrines\footnote{Although these definitions are usually stated for primary doctrines \cite{MaiettiRosolini13,MaiettiRosolini15}, we give them in the Boolean case.}; this notion, which originates in Lawvere's work \cite{Lawvere70}, amounts to the possibility of interpreting equality.

\begin{definition}[\cite{MaiettiRosolini13}]\label{def:equality-classical}
    A Boolean doctrine $\P\colon\C\op\to\BA$ is \emph{elementary} if for every $X\in \C$ there is an element $\delta_X\in\P(X\times X)$ such that, for every $Y\in\C$, the assignment
    \begin{align*}
       \text{\AE}^X_Y\colon \P(Y \times X)&\longrightarrow \P(Y\times X\times X)\\
       \alpha &\longmapsto\P(\ple{\pr_1,\pr_2})(\alpha)\land \P(\ple{\pr_2,\pr_3})(\delta_X)
    \end{align*}
        is a left adjoint of the function 
    \[  \P(\id_Y\times\Delta_X) \colon \P(Y\times X\times X) \to \P(Y \times X).\]
\end{definition}

Equivalently, as shown in \cite[Prop.~2.5]{EmPaRo20},

\begin{definition}\label{def:equality}
    A Boolean doctrine $\P\colon\C\op\to\BA$ is \emph{elementary} if there is a family of elements $(\delta_X)_{X\in\C}$, with $\delta_X\in\P(X\times X)$ for each $X\in \C$, such that for every $X,Y\in\C$
    \begin{enumerate}
        \item\label{i:def=1} $\top_{\P(X)}\leq\P(\Delta_X)(\delta_X)$ in $\P(X)$;
        \item\label{i:def=2} for every $\alpha\in\P(X)$, $\P(\pr_1)(\alpha)\land \delta_X\leq \P(\pr_2)(\alpha)$ in $\P(X\times X)$;
        \item\label{i:def=3} $\P(\ple{\pr_1,\pr_3})(\delta_X)\land \P(\ple{\pr_2,\pr_4})(\delta_Y)\leq \delta_{X\times Y}$ in $\P(X\times Y\times X\times Y)$.
    \end{enumerate}
\end{definition}

The proof of \cite[Prop.~2.5]{EmPaRo20} shows more: a family $(\delta_X)_{X\in\C}$ satisfies the condition in \cref{def:equality-classical} if and only if it satisfies the conditions in \cref{def:equality}.

Informally, condition \eqref{i:def=1} in \cref{def:equality} is the reflexivity of the equality relation, i.e.\ $\vdash x=x$. Condition \eqref{i:def=2} is the substitutivity property, i.e.\ $\alpha(x)\land (x=x')\vdash \alpha(x')$. Finally, condition \eqref{i:def=3} can be roughly interpreted as $``(x=x')\land (y=y')\vdash (x,y)=(x',y')"$, meaning that two pairs coincide if both entries do.

\begin{remark}
    There is a unique family $(\delta_X)_{X\in\C}$ satisfying the conditions in \cref{def:equality-classical} (or, equivalently, in \cref{def:equality}).
    In fact, for every Boolean doctrine $\P$ and every $X \in \C$, there is at most one element $\delta_X \in \P(X \times X)$ satisfying the condition in \cref{def:equality-classical}.
    Indeed, for one such $\delta_X$, since $\delta_X=\text{\AE}^X_\tmn(\top_{\P(X)})$, we can observe that for every $\beta\in\P(X\times X)$ we have
    \begin{equation} \label{eq:describe}
    \delta_X\leq\beta\iff \top_{\P(X)}\leq\P(\Delta_X)(\beta).
    \end{equation}
    This condition determines the element $\delta_X$ uniquely, as \eqref{eq:describe} describes its principal upset.
\end{remark}

For every $X\in\C$, the element $\delta_X$ is called \emph{fibered equality on $X$}.

\begin{remark}\label{r:delta-symm}
    For every elementary Boolean doctrine $\P$ and every object $X$ in its base category we have
    \[
    \delta_X \leq \P(\ple{\pr_2,\pr_1})(\delta_X),
    \]
    which states the symmetry of the equality relation, i.e.\ $x=x'\vdash x'=x$.
    This is immediate from the equality $\delta_X=\text{\AE}^X_\tmn(\top_{\P(X)})$, the adjunction $\text{\AE}^X_\tmn\dashv\P(\Delta_X)$ and the equality $\ple{\pr_2,\pr_1}\circ\Delta_X=\Delta_X$.
\end{remark}

\begin{definition}
    An \emph{elementary Boolean doctrine morphism} from $\P\colon\C\op\to \BA$ to $\R \colon \cat{D}\op\to \BA$ is a Boolean doctrine morphism $(M,\m)\colon \P\to\R$ that preserves fibered equalities, i.e.\ such that, for every $X\in\C$, $\m_{X\times X}(\delta^\P_X)=\delta^\R_{M(X)}$.
\end{definition}

\begin{definition}
    An \emph{elementary first-order Boolean doctrine} $\P\colon\C\op\to\BA$ is a first-order Boolean doctrine $\P$ that is elementary.
    
    An \emph{elementary first-order Boolean doctrine morphism} from $\P$ to $\R$ is a first-order Boolean doctrine morphism $(M,\m)\colon \P\to\R$ that is elementary.
\end{definition}

\begin{example}\label{fbf=}
    For a first-order theory $\T$ in a language with equality one can define a syntactic doctrine $\LT_{=}^\T$ similarly to \cref{fbf}. In this case, one has to consider also the formulas involving equality, and take the equivalence relation of equiprovability modulo $\T$ in first-order logic with equality.
    The syntactic doctrine $\LT_{=}^\T$ is an elementary first-order Boolean doctrine: for every context $\vec x = (x_1,\dots,x_n)$, the fibered equality $\delta_{\vec x}\in \LT_=^\T(x_1,\dots,x_n,x'_1,\dots,x'_n)$ is the formula $(x_1=x'_1)\land\dots\land (x_n=x'_n)$.
\end{example}

\begin{example}
    The subset doctrine $\mathscr{P}\colon\Set\op\to\BA$ is elementary: for every set $X$, the fibered equality $\delta_X\in \mathscr{P}(X\times X)$ is the subset $\Delta_X=\{(x,x)\mid x \in X\}\subseteq X\times X$.
\end{example}

The following captures the idea that the elementarity of a first-order Boolean doctrine can be checked in a quantifier-free fragment; this boils down to the fact that the principle of substitutivity \eqref{i:def=2} in \cref{def:equality}---i.e.\ $\alpha(x) \land (x = x') \to \alpha(x')$---can be checked just for those $\alpha$ that are quantifier-free.

\begin{proposition}\label{p:equality-only-p0}
    Let $\P_0$ be a quantifier-free fragment of a first-order Boolean doctrine $\P\colon\C\op\to\BA$. The following are equivalent.
    \begin{enumerate}
        \item \label{i:elementary-everywhere}
        The first-order Boolean doctrine $\P$ is elementary, and for every $X\in\C$ the fibered equality $\delta_X$ belongs to $\P_0(X\times X)$.
        \item \label{i:elementary-on-layer-0}
        The Boolean doctrine $\P_0$ is elementary.
    \end{enumerate}
    Moreover, under these conditions, the fibered equalities of $\P$ and $\P_0$ coincide. In other words, the Boolean doctrine morphism defined by the componentwise inclusion $(\id_\C,i)\colon\P_0\hookrightarrow\P$ is elementary.
\end{proposition}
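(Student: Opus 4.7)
The plan is to prove \eqref{i:elementary-everywhere}$\Rightarrow$\eqref{i:elementary-on-layer-0} by direct restriction, and \eqref{i:elementary-on-layer-0}$\Rightarrow$\eqref{i:elementary-everywhere} by induction along the QA-stratification $(\P_n)_{n \in \N}$ arising from $\P_0$. For the easy direction, the same family $(\delta_X)_{X \in \C}$ witnesses the elementarity of $\P_0$: conditions \eqref{i:def=1} and \eqref{i:def=3} of \cref{def:equality} involve only reindexings and meets of the $\delta_X$'s, which stay in $\P_0$ since it is a Boolean subfunctor of $\P$; and \eqref{i:def=2} for $\alpha \in \P_0(X) \subseteq \P(X)$ is immediate.

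For the converse, let $(\delta_X)_{X \in \C}$ be the fibered equalities of $\P_0$. Conditions \eqref{i:def=1} and \eqref{i:def=3} again transfer verbatim from $\P_0$ to $\P$ by the subfunctor property. The substantial task is \eqref{i:def=2} in $\P$. For each $X \in \C$ set
\[
T_X \coloneqq \bigl\{\alpha \in \P(X) \mid \P(\pr_1)(\alpha) \land \delta_X \leq \P(\pr_2)(\alpha) \text{ in } \P(X \times X)\bigr\}.
\]
Since $\P(X) = \bigcup_n \P_n(X)$ by \cref{d:whole-and-first-layer}\eqref{i:qff3}, it suffices to show by induction on $n$ that $\P_n(X) \subseteq T_X$ for every $X$; the base case is the hypothesis.

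The inductive step follows from two facts. (a) $T_X$ is a Boolean subalgebra of $\P(X)$: closure under $\top, \bot, \land, \lor$ is immediate from $\P(\pr_1)$ and $\P(\pr_2)$ being Boolean homomorphisms, while closure under negation amounts to the reverse substitutivity $\P(\pr_2)(\alpha) \land \delta_X \leq \P(\pr_1)(\alpha)$, obtained by reindexing the defining inequality of $T_X$ along the swap $\ple{\pr_2, \pr_1} \colon X \times X \to X \times X$ and invoking the symmetry $\delta_X \leq \P(\ple{\pr_2, \pr_1})(\delta_X)$ from \cref{r:delta-symm}. (b) If $\beta \in T_{X \times Y}$, then $\fa{Y}{X}\beta \in T_X$: Beck--Chevalley gives $\P(\pr_i)(\fa{Y}{X}\beta) = \fa{Y}{X \times X}(\P(\pr_i \times \id_Y)(\beta))$ for $i = 1, 2$, so by the adjunction $\P(\pi) \dashv \fa{Y}{X \times X}$ (with $\pi \colon X \times X \times Y \to X \times X$ the projection) the inequality reduces to
\[
\P(\ple{q_1, q_Y})(\beta) \land \P(\pi)(\delta_X) \leq \P(\ple{q_2, q_Y})(\beta) \quad \text{in } \P(X \times X \times Y),
\]
with $q_1, q_2, q_Y$ the three projections out of $X \times X \times Y$. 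Pulling \eqref{i:def=3} for the pair $(X, Y)$ back along the map $\phi \colon X \times X \times Y \to X \times Y \times X \times Y$, $(x_1, x_2, y) \mapsto (x_1, y, x_2, y)$, and using \eqref{i:def=1} to collapse the $Y$-diagonal factor to $\top$, yields $\P(\pi)(\delta_X) \leq \P(\phi)(\delta_{X \times Y})$; the inductive hypothesis $\beta \in T_{X \times Y}$ reindexed along $\phi$ then closes the step.

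Since $\P_{n+1}(X)$ is Boolean-generated by the elements $\fa{Y}{X}\beta$ with $\beta \in \P_n(X \times Y)$ and $Y \in \C$, facts (a) and (b) complete the induction, so $\P$ is elementary with the same fibered equalities as $\P_0$; the coincidence of the $\delta_X$'s and the fact that the inclusion $\P_0 \hookrightarrow \P$ is elementary are then immediate from the construction. The main obstacle is step (b), where the interplay between the Beck--Chevalley square for $\fa{Y}{X}$ and the pairing axiom \eqref{i:def=3} has to be aligned explicitly; everything else reduces to routine Boolean manipulations.
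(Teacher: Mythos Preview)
Your proof is correct and follows essentially the same approach as the paper's: both directions are handled identically, and for the substantial inductive step you and the paper use the same ingredients (Beck--Chevalley, the adjunction/counit, and the pullback of \eqref{i:def=3} along the map $(x_1,x_2,y)\mapsto(x_1,y,x_2,y)$ combined with \eqref{i:def=1}), with only a cosmetic difference in organization---you package the induction as closure properties of the set $T_X$, while the paper runs a direct case analysis on $\top$, $\land$, $\lnot$, $\forall$. One small remark: in step (b) your phrase ``reduces to'' silently absorbs a counit step ($\P(\pi)\fa{Y}{X\times X}(\gamma)\leq\gamma$) in addition to the adjunction; this is fine but worth making explicit.
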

\begin{proof}
    \eqref{i:elementary-everywhere} $\Rightarrow$ \eqref{i:elementary-on-layer-0}. It is easy to see that conditions (\ref{i:def=1}--\ref{i:def=3}) in \cref{def:equality} hold in $\P_0$ if they hold in $\P$ (with respect to the same fibered equalities).
    
    \eqref{i:elementary-on-layer-0} $\Rightarrow$ \eqref{i:elementary-everywhere}. Suppose that there is a family $(\delta_X)_{X\in\C}$ satisfying the conditions (\ref{i:def=1}--\ref{i:def=3}) in \cref{def:equality} with $\P_0$ instead of $\P$. We show that the same conditions hold for $\P$, as well (with respect to the same fibered equalities). The conditions \eqref{i:def=1} and \eqref{i:def=3} follow from the fact that $\P_0$ is a Boolean subfunctor of $\P$. We prove the condition \eqref{i:def=2} by induction on the complexity of $\alpha\in\P(X)$, recalling that $\P_0$ generates $\P$ by Boolean combinations and universal quantifications.
    \begin{itemize}
        \item If $\alpha\in\P_0(X)\subseteq\P(X)$ is a generator, then the condition \eqref{i:def=2} holds by assumption.
        (Note that, in particular, \eqref{i:def=2} holds for $\alpha=\top_{\P(X)}$.)
        \item If $\alpha=\beta\land\gamma\in\P(X)$ for some $\beta,\gamma\in\P(X)$ such that in $\P(X\times X)$
        \[
        \P(\pr_1)(\beta)\land \delta_X\leq \P(\pr_2)(\beta)\quad\text{and}\quad \P(\pr_1)(\gamma)\land \delta_X\leq \P(\pr_2)(\gamma),
        \]
        then
        \[
        \P(\pr_1)(\alpha)\land \delta_X =\P(\pr_1)(\beta)\land\P(\pr_1)(\gamma)\land \delta_X \leq \P(\pr_2)(\beta)\land \P(\pr_2)(\gamma)= \P(\pr_2)(\alpha).
        \]
        \item If $\alpha=\lnot\beta\in\P(X)$ for some $\beta\in\P(X)$ such that in $\P(X\times X)$
        \[
        \P(\pr_1)(\beta)\land \delta_X\leq \P(\pr_2)(\beta),
        \]
        then, applying the reindexing along $\ple{\pr_2,\pr_1}\colon X\times X\to X\times X$ to both sides, we obtain
        \[
        \P(\pr_2)(\beta)\land \P(\ple{\pr_2,\pr_1})(\delta_X)\leq \P(\pr_1)(\beta),
        \]
        from which we get
        \[
        \P(\pr_1)(\lnot\beta)\land \P(\ple{\pr_2,\pr_1})(\delta_X)\leq \P(\pr_2)(\lnot\beta).
        \]
        Since $\delta_X\leq\P(\ple{\pr_2,\pr_1})(\delta_X)$ (\cref{r:delta-symm}) we obtain
        \[
            \P(\pr_1)(\alpha)\land\delta_X\leq\P(\pr_1)(\lnot\beta)\land\P(\ple{\pr_2,\pr_1})(\delta_X)\leq  \P(\pr_2)(\lnot\beta)= \P(\pr_2)(\alpha).
        \]
        
        \item If $\alpha=\fa{Y}{X}\beta\in\P(X)$ for some $\beta\in\P(X\times Y)$ such that in $\P(X\times Y\times X\times Y)$
        \begin{equation}\label{eq:elem1}
        \P(\ple{\pr_1,\pr_2})(\beta)\land\delta_{X\times Y}\leq\P(\ple{\pr_3,\pr_4})(\beta),
        \end{equation}
        then we want to prove that in $\P(X)$
        \begin{equation} \label{eq:elem2}
          \P(\pr_1)(\fa{Y}{X}\beta)\land \delta_X\leq \P(\pr_2)(\fa{Y}{X}\beta).
        \end{equation}
        By the Beck-Chevalley condition, we rewrite \eqref{eq:elem2} as
        \begin{equation*}
          \P(\pr_1)(\fa{Y}{X}\beta)\land \delta_X\leq \fa{Y}{X\times X}(\P(\ple{\pr_2,\pr_3})(\beta)),
        \end{equation*}
        and so by the adjunction $\P(\ple{\pr_1,\pr_2})\dashv\fa{Y}{X\times X}$ we are left to prove in $\P(X\times X\times Y)$
        \begin{equation}\label{eq:elem6}
          \P(\pr_1)(\fa{Y}{X}\beta)\land \P(\ple{\pr_1,\pr_2})(\delta_X)\leq \P(\ple{\pr_2,\pr_3})(\beta).
        \end{equation}
        Observe that in $\P(X\times Y\times X\times Y)$, using the condition \eqref{i:def=3} in \cref{def:equality} and \eqref{eq:elem1} we have
        \begin{equation}\label{eq:elem3}
        \P(\ple{\pr_1,\pr_2})(\beta)\land\P(\ple{\pr_1,\pr_3})(\delta_X)\land \P(\ple{\pr_2,\pr_4})(\delta_Y)\leq\P(\ple{\pr_3,\pr_4})(\beta).
        \end{equation}
        Reindexing both sides of \eqref{eq:elem3} along $\ple{\pr_1,\pr_3,\pr_2,\pr_3}\colon X\times X\times Y\to X\times Y\times X\times Y$ we obtain in $\P( X\times X\times Y)$
        \begin{equation*}
        \P(\ple{\pr_1,\pr_3})(\beta)\land\P(\ple{\pr_1,\pr_2})(\delta_X)\land \P(\ple{\pr_3,\pr_3})(\delta_Y)\leq\P(\ple{\pr_2,\pr_3})(\beta),
        \end{equation*}
        which we reduce, using the condition \eqref{i:def=1} in \cref{def:equality}, to
        \begin{equation}\label{eq:elem4}
        \P(\ple{\pr_1,\pr_3})(\beta)\land\P(\ple{\pr_1,\pr_2})(\delta_X)\leq\P(\ple{\pr_2,\pr_3})(\beta).
        \end{equation}
        Observe that by the adjunction $\P(\pr_1)\dashv\fa{Y}{X}$ we have in $\P(X\times Y)$ 
        \begin{equation}\label{eq:elem5}
            \P(\pr_1)(\fa{Y}{X}\beta)\leq\beta.
        \end{equation}
        Reindexing both sides of \eqref{eq:elem5} along $\ple{\pr_1,\pr_3}\colon X\times X\times Y\to X\times Y$, we obtain in $\P( X\times X\times Y)$
        \begin{equation}\label{eq:elem7}
        \P(\pr_1)(\fa{Y}{X}\beta)\leq\P(\ple{\pr_1,\pr_3})(\beta).
        \end{equation}
        We now have the ingredients to show \eqref{eq:elem6}:
        \begin{align*}
          \P(\pr_1)(\fa{Y}{X}\beta)\land \P(\ple{\pr_1,\pr_2})(\delta_X)&\leq \P(\ple{\pr_1,\pr_3})(\beta)\land \P(\ple{\pr_1,\pr_2})(\delta_X)&&\text{(by \eqref{eq:elem7})}\\
          &\leq \P(\ple{\pr_2,\pr_3})(\beta) &&\text{(by \eqref{eq:elem4})}.
        \end{align*}
    \end{itemize}

    The above proofs of the two implications also prove the last part of the statement.
\end{proof}

We enrich \cref{d:whole-and-first-layer,d:only-layers} with the elementary structure, imposing that every fibered equality is a quantifier-free formula.

\begin{definition}[Elementary quantifier-free fragment]\label{d:whole-and-first-layer=}\hfill
    \begin{enumerate}
        \item 
        Let $\P\colon\C\op\to\BA$ be an elementary first-order Boolean doctrine. An \emph{elementary quantifier-free fragment of $\P$} is a quantifier-free fragment $\P_0$ of $\P$ such that, for every $X\in\C$, $\delta_X\in\P_0(X\times X)$.
        
        \item Let $\P$ and $\R$ be elementary first-order Boolean doctrines, and let $\P_0$ and $\R_0$ be elementary quantifier-free fragments of $\P$ and $\R$, respectively.
        A \emph{morphism of elementary quantifier-free fragments} from $(\P_0, \P)$ to $(\R_0, \R)$ is a morphism $(M, \m) \colon (\P_0,\P )\to (\R_0,\R)$ of quantifier-free fragments such that $(M, \m) \colon \P\to \R$ is an elementary first-order Boolean doctrine morphism.
        
        \item We let $\QFF_=$ denote the subcategory of $\QFF$ whose objects are pairs $(\P_0, \P)$ with $\P_0$ an elementary quantifier-free fragment of an elementary first-order Boolean doctrine $\P$ and whose morphisms are morphisms of elementary quantifier-free fragments.
    \end{enumerate}
\end{definition}

\begin{example}\label{ex:motivating-strat-fragment=}
    Similarly to \cref{ex:motivating-strat-fragment}, for a theory $\T$ in a language with equality, the functor $\LT_{=,0}^\T$ of quantifier-free formulas modulo $\T$ is an elementary quantifier-free fragment of the syntactic doctrine $\LT_=^\T$.
\end{example}

\begin{proposition}
    Let $i\colon\P_0\hookrightarrow\P_1$ be a QA-one-step Boolean doctrine. The following are equivalent.
    \begin{enumerate}
        \item
        The Boolean doctrine $\P_1$ is elementary, and, for each $X\in\C$, the fibered equality $\delta_X$ belongs to $\P_0(X\times X)$.
        \item
        The Boolean doctrine $\P_0$ is elementary.
    \end{enumerate}
    Moreover, under these conditions, the fibered equalities of $\P_1$ and $\P_0$ coincide. In other words, the Boolean doctrine morphism defined by the componentwise inclusion $(\id_\C,i)\colon\P_0\hookrightarrow\P_1$ is elementary.
\end{proposition}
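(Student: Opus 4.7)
The plan is to mimic the proof of Proposition \ref{p:equality-only-p0} with the simplification that, in the one-step setting, we do not need induction on iterated quantifier alternation but only on the Boolean complexity required to build a generator of $\P_1$ out of one-step quantifications of elements of $\P_0$.

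The direction (1) $\Rightarrow$ (2) is immediate: the subfunctor inclusion $i\colon \P_0 \hookrightarrow \P_1$ is a Boolean-doctrine morphism, so each of the three conditions of Definition~\ref{def:equality} for $\P_1$ (which involve only the family $(\delta_X)$, that is assumed to already live in $\P_0$) restricts to the corresponding condition for $\P_0$; condition (ii) in particular holds a fortiori when $\alpha$ is restricted to lie in $\P_0(X)$.

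For (2) $\Rightarrow$ (1), take the fibered equalities $(\delta_X)_{X\in\C}$ of $\P_0$ and argue that the same family witnesses elementarity of $\P_1$. Conditions \eqref{i:def=1} and \eqref{i:def=3} in Definition~\ref{def:equality} only compare reindexings of the $\delta_X$ and of $\delta_{X\times Y}$, so they are inherited by $\P_1$ via the inclusion. For the substitutivity condition~\eqref{i:def=2}, I would proceed by induction on the Boolean complexity of $\alpha \in \P_1(X)$, using that, by one-step generation (Definition~\ref{d:one-step-doctrine}), $\P_1(X)$ is generated as a Boolean algebra by elements of the form $\fa{Y}{X}\beta$ with $\beta \in \P_0(X\times Y)$. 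The inductive steps for $\land$ and $\lnot$ are word-for-word those in the proof of Proposition~\ref{p:equality-only-p0}; they rely only on reindexing functoriality and the symmetry of $\delta_X$ (Remark~\ref{r:delta-symm}), both of which are available here.

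The heart of the proof, and the main obstacle, is the base case $\alpha = \fa{Y}{X}\beta$ with $\beta \in \P_0(X\times Y)$. This proceeds exactly as the universal-quantifier case in the proof of Proposition~\ref{p:equality-only-p0}, simply replacing the internal adjunction and Beck--Chevalley of $\P$ by their one-step analogues in Definition~\ref{d:one-step-doctrine}: first apply one-step Beck--Chevalley along $\pr_2\colon X\times X\to X$ to rewrite $\P_1(\pr_2)(\fa{Y}{X}\beta) = \fa{Y}{X\times X}(\P_0(\ple{\pr_2,\pr_3})(\beta))$; then use the one-step adjunction to reduce the desired inequality to
\[
\P_1(\pr_1)(\fa{Y}{X}\beta) \land \P_0(\ple{\pr_1,\pr_2})(\delta_X) \leq \P_0(\ple{\pr_2,\pr_3})(\beta)
\]
in $\P_1(X\times X\times Y)$; and finally combine the counit inequality $\P_1(\pr_1)(\fa{Y}{X}\beta) \leq \P_0(\ple{\pr_1,\pr_3})(\beta)$ (reindexed from the one-step adjunction) with the substitutivity of $\P_0$ applied to $\beta$ — reindexed along $\ple{\pr_1,\pr_3,\pr_2,\pr_3}$ and simplified using \eqref{i:def=1} and \eqref{i:def=3} — to conclude. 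One-step generation ensures $\beta$ lives in $\P_0$, so no further nested quantification is ever created during the computation, and all intermediate terms stay in either $\P_0$ or $\P_1$ as needed.

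The moreover clause follows at once: the argument produced the same family $(\delta_X)_{X\in\C}$ as fibered equalities for both $\P_0$ and $\P_1$, so by the uniqueness of fibered equalities noted after Definition~\ref{def:equality} the two families must coincide, which is exactly the elementarity of the inclusion $(\id_\C, i)\colon \P_0 \hookrightarrow \P_1$.
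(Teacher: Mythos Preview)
Your proposal is correct and takes essentially the same approach as the paper, which simply says the proof is ``similar to the proof of \cref{p:equality-only-p0}''. You have correctly identified the only adaptation required: the induction on the formation of $\P_1$ needs only Boolean steps over the base case $\alpha=\fa{Y}{X}\beta$ with $\beta\in\P_0(X\times Y)$, and in that base case the one-step adjunction and one-step Beck--Chevalley replace their full counterparts, while substitutivity for $\beta$ is supplied directly by the elementarity of $\P_0$ rather than by an inductive hypothesis.
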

\begin{proof}
    The proof is similar to the proof of \cref{p:equality-only-p0}.
\end{proof}

\begin{definition}[Elementary QA-one-step Boolean doctrine] \label{d:one-step-doctrine-eq}
    An \emph{elementary QA-one-step Boolean doctrine} is a QA-one-step Boolean doctrine $i\colon\P_0 \hookrightarrow \P_1$ with $\P_0$ elementary.
\end{definition}

\begin{definition}[Elementary QA-stratified Boolean doctrine] \hfill
    \begin{enumerate}
        \item An \emph{elementary QA-stratified Boolean doctrine} is a QA-stratified Boolean doctrine
        \[
        \P_0\hookrightarrow \P_1 \hookrightarrow \P_2 \hookrightarrow \dots
        \]
        with $\P_0$ elementary.
    
        \item A \emph{morphism of elementary QA-stratified Boolean doctrines} from $(\P_n)_{n\in\N}$ to $(\R_n)_{n\in\N}$ is
        a morphism 
        \[
        (M, (\m_n)_{n\in\N}) \colon (\P_n)_{n\in\N}\to (\R_n)_{n\in\N}
        \]
        of QA-stratified Boolean doctrines such that $\m_0$ preserves fibered equalities.
        
        \item We let $\QA_{=}$ denote the subcategory of $\QA$ whose objects are elementary QA-stratified Boolean doctrines and whose morphisms are morphisms of elementary QA-stratified Boolean doctrines.
    \end{enumerate}
\end{definition}

\begin{proposition}
    The equivalence between $\QFF$ and $\QA$ (\cref{p:equivalent-cat}) restricts to an equivalence between $\QFF_=$ and $\QA_{=}$.
\end{proposition}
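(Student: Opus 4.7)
The plan is to verify that each of the mutually-inverse functors $U \colon \QFF \to \QA$ and $L \colon \QA \to \QFF$ from the proof of \cref{p:equivalent-cat} restricts to the elementary subcategories. The single technical input needed is \cref{p:equality-only-p0}, which says that (under the natural compatibility) the elementarity of a first-order Boolean doctrine is equivalent to the elementarity of any of its quantifier-free fragments, with the \emph{same} fibered equalities.

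First I would check that $U$ sends $\QFF_=$ into $\QA_=$. On an object $(\P_0,\P)$ of $\QFF_=$, the stratification $(\P_n)_{n \in \N}$ has $\P_0$ elementary by definition, so $U(\P_0,\P) = (\P_n)_{n \in \N}$ lies in $\QA_=$. On a morphism $(M,\m)\colon (\P_0,\P) \to (\R_0,\R)$ of $\QFF_=$, the ambient morphism $(M,\m)\colon \P \to \R$ is elementary, i.e.\ $\m_{X \times X}(\delta_X^{\P}) = \delta_{M(X)}^{\R}$ for every $X$. Since by \cref{p:equality-only-p0} the fibered equalities of $\P$ and $\P_0$ coincide (and likewise for $\R$ and $\R_0$), the restriction $\m_{\mid \P_0}$ preserves fibered equalities, so $U(M,\m) = (M,(\m_{\mid \P_n})_{n \in \N})$ is a morphism in $\QA_=$.

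Next I would check that $L$ sends $\QA_=$ into $\QFF_=$. On an object $(\P_n)_{n \in \N}$ of $\QA_=$, the base entry $\P_0$ is elementary and it is a quantifier-free fragment of the first-order Boolean doctrine $\bigcup_{n \in \N} \P_n$ built in \cref{p:equivalent}. Applying the implication \eqref{i:elementary-on-layer-0}$\Rightarrow$\eqref{i:elementary-everywhere} of \cref{p:equality-only-p0}, the ambient doctrine $\bigcup_{n \in \N} \P_n$ is itself elementary with the same fibered equalities as $\P_0$, so $\P_0$ is an \emph{elementary} quantifier-free fragment of it, and $L(\P_n)_{n \in \N} \in \QFF_=$. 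On a morphism $(M,(\m_n)_{n \in \N})$ of $\QA_=$, the component $\m_0$ preserves fibered equalities by definition; since the fibered equalities of $\bigcup_n \P_n$ and $\bigcup_n \R_n$ coincide with those of $\P_0$ and $\R_0$ respectively, the induced morphism $(M,\bigcup_{n \in \N} \m_n)$ automatically preserves fibered equalities at every level, hence is a morphism in $\QFF_=$.

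Finally, because $U$ and $L$ are already mutually inverse on the larger categories $\QFF$ and $\QA$, their restrictions are also mutually inverse, giving the desired equivalence. I do not expect any genuine obstacle here: the only subtle point is to recall that \cref{p:equality-only-p0} guarantees that the fibered equalities are the \emph{same} element of $\P_0(X\times X) \subseteq \P(X\times X)$, which is precisely what allows the preservation condition to be read consistently in either the layered or the ambient picture.
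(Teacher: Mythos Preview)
Your proposal is correct and follows essentially the same approach as the paper: both arguments show that the functors $U$ and $L$ from \cref{p:equivalent-cat} restrict to the elementary subcategories by invoking \cref{p:equality-only-p0}. Your write-up is simply more explicit than the paper's one-line proof, spelling out the verification on objects and morphisms separately.
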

\begin{proof}
    The fact that the restrictions $U'\colon \QFF_{=}\to\QA_=$ and $L'\colon \QA_=\to \QFF_{=}$ of the functors $U\colon \QFF\to\QA$ and $L\colon \QA\to \QFF$ defined in the proof of \cref{p:equivalent-cat} are well-defined follows from \cref{p:equality-only-p0}.
\end{proof}

Finally, we adapt the results about the quantifier completion (\cref{sec:quantifier compl}) to the case with equality.

\begin{theorem}\label{c:equality-only-p0}
    Let $\P\colon\C\op\to\BA$ be a Boolean doctrine and $(\id_\C,\mathfrak{i})\colon \P\to\P^\EA$ a quantifier completion of $\P$. If $\P$ is elementary, then $\P^\EA$ is elementary, and the Boolean doctrine morphism $(\id_\C,\mathfrak{i})$ is elementary.
\end{theorem}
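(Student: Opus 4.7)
The plan is to reduce the statement to Proposition~\ref{p:equality-only-p0} by exhibiting $\P$ as an elementary quantifier-free fragment of $\P^\EA$. By Theorem~\ref{p:qff-of-completion}, $\P$ (or, more precisely, its componentwise injective image under $\mathfrak{i}$) is a quantifier-free fragment of $\P^\EA$. Since $\mathfrak{i}$ is componentwise injective, I may identify $\P$ with $\mathfrak{i}(\P)$ as a Boolean subdoctrine of $\P^\EA$.

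Next, I would define the candidate fibered equalities in $\P^\EA$ by $\delta^{\P^\EA}_X \coloneqq \mathfrak{i}_{X \times X}(\delta^\P_X)$ for each $X \in \C$. Because $(\id_\C, \mathfrak{i})$ is a Boolean doctrine morphism, it preserves Boolean operations and reindexings; hence the three inequalities of \cref{def:equality} transfer from $\P$ to the subdoctrine $\mathfrak{i}(\P)$ of $\P^\EA$. Thus $\mathfrak{i}(\P)$ is an elementary Boolean doctrine with fibered equalities $\delta^{\P^\EA}_X$, and by construction $\delta^{\P^\EA}_X$ lies in the fibers of $\mathfrak{i}(\P)$.

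With this setup in place, the implication \eqref{i:elementary-on-layer-0} $\Rightarrow$ \eqref{i:elementary-everywhere} of \cref{p:equality-only-p0}, applied to the elementary quantifier-free fragment $\mathfrak{i}(\P) \hookrightarrow \P^\EA$, immediately yields that $\P^\EA$ is elementary with the same fibered equalities $\delta^{\P^\EA}_X$; the ``moreover'' clause of that proposition then states that the inclusion preserves fibered equalities. Unwinding the identification, this says precisely that $(\id_\C, \mathfrak{i})$ is an elementary Boolean doctrine morphism.

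The main work is therefore entirely encapsulated in \cref{p:equality-only-p0}, whose proof handles the delicate inductive verification of the substitutivity axiom for formulas of arbitrary quantifier-alternation depth. Once $\P$ has been recognized as a quantifier-free fragment of $\P^\EA$ via \cref{p:qff-of-completion}, no further computation is required here: the only potential obstacle---namely, checking substitutivity against elements of $\P^\EA$ that are not themselves in the image of $\mathfrak{i}$---is already subsumed.
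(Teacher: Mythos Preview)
Your proposal is correct and follows essentially the same approach as the paper: the paper's proof is the one-liner ``This follows from \cref{p:equality-only-p0,p:qff-of-completion}'', and your write-up simply unpacks how these two ingredients combine. Your explicit identification of $\P$ with $\mathfrak{i}(\P)$ and transfer of the fibered equalities via $\mathfrak{i}$ is exactly what is implicit in the paper's invocation of those two results.
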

\begin{proof}
    This follows from \cref{p:equality-only-p0,p:qff-of-completion}.
\end{proof}

\Cref{c:equality-only-p0} is similar to the fact that the existential completion of an elementary primary doctrine is elementary \cite[Prop.~6.1]{Trotta2020}.

\begin{remark}
    Another consequence of \cref{p:equality-only-p0} is that the quantifier completion of an elementary doctrine satisfies the version of the universal property of the quantifier completion (\cref{d:quantifier-completion}) in which every doctrine and every morphism are required to be elementary.
\end{remark}

\begin{corollary}
    The forgetful functor from the category of elementary first-order Boolean doctrines with a small base category to the category of elementary Boolean doctrines
    with a small base category has a left adjoint.
\end{corollary}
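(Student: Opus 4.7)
The plan is to reduce the statement to the non-elementary case (\cref{c:left-adjoint-small}) combined with the elementary refinement already established in \cref{c:equality-only-p0} and the remark preceding the corollary. So, I would first fix an elementary Boolean doctrine $\P \colon \C\op \to \BA$ with $\C$ small, and form its quantifier completion $(\id_\C, \eta_\P) \colon \P \to UF(\P)$, which exists by \cref{t:quantif-complet}.

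Next, I would invoke \cref{c:equality-only-p0} to conclude that $UF(\P)$ is again elementary, and that the unit $(\id_\C,\eta_\P)$ is an elementary Boolean doctrine morphism. This shows that the candidate left adjoint takes an elementary input to an elementary output. It remains to verify the universal property in the elementary setting: given an elementary first-order Boolean doctrine $\R$ over a small base category and an elementary Boolean doctrine morphism $(M,\m) \colon \P \to \R$, there exists a unique elementary first-order Boolean doctrine morphism $(N,\n) \colon UF(\P) \to \R$ with $(M,\m) = (N,\n) \circ (\id_\C, \eta_\P)$.

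Existence and uniqueness of such an $(N,\n)$ as a (non-elementary) first-order Boolean doctrine morphism is precisely the content of \cref{t:quantif-complet}, so the only new point is to check that this canonical $(N,\n)$ automatically preserves fibered equalities. For this, I would observe that in the triangle factorization, $(M,\m)$ already preserves fibered equalities by hypothesis, and the unit $(\id_\C,\eta_\P)$ is elementary with image generating $UF(\P)$; since $\m_{X\times X}(\delta^\P_X) = \delta^\R_{M(X)}$ and $\n_{X\times X}\big((\eta_\P)_{X\times X}(\delta^\P_X)\big) = \m_{X\times X}(\delta^\P_X)$, the element $\delta^{UF(\P)}_X = (\eta_\P)_{X\times X}(\delta^\P_X)$ (by \cref{c:equality-only-p0}) is sent by $\n$ to $\delta^\R_{M(X)}$, as required.

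Assembling these pieces, the assignment $\P \mapsto UF(\P)$ restricts to a functor from elementary Boolean doctrines over small categories to elementary first-order Boolean doctrines over small categories, and it is left adjoint to the forgetful functor with unit $(\id_\C,\eta_\P)$. The only potential subtlety I anticipate is the bookkeeping that the functoriality on elementary morphisms and the naturality of the unit carry over unchanged from \cref{t:left-adjoint}; this should be routine since the elementary condition is preserved pointwise and the underlying constructions are identical.
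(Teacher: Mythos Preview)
Your proposal is correct and follows essentially the same approach as the paper: the corollary is stated without proof, immediately after the remark observing that \cref{p:equality-only-p0} implies the quantifier completion of an elementary doctrine satisfies the elementary version of the universal property in \cref{d:quantifier-completion}. Your argument simply unpacks this remark in detail, and the key computation---that $(N,\n)$ preserves fibered equalities because $\delta^{UF(\P)}_X = (\eta_\P)_{X\times X}(\delta^\P_X)$ and $\n \circ \eta_\P = \m$---is exactly the content the paper leaves implicit.
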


\begin{example}\label{ex:sanitycheck=}
    Let $\T$ be a universal theory in a language with equality.
    Let $\LT_=^\T\colon\ctx\op\to\BA$ be the syntactic doctrine of $\T$ (as in \cref{fbf=}) and $\LT^\T_{=,0}$ be the elementary quantifier-free fragment of $\LT_=^\T$ of quantifier-free formulas modulo $\T$ (as in \cref{ex:motivating-strat-fragment=}).
    Then, the morphism $(\id_{\ctx},i)\colon\LT^\T_{=,0}\hookrightarrow\LT_=^\T$ (where $i\colon\LT^\T_{=,0}\hookrightarrow\LT_=^\T$ is the componentwise inclusion) is a quantifier completion of $\LT^\T_{=,0}$.
    To prove this, the idea is that one can piggyback on the case without equality (\cref{ex:sanitycheck}) by considering equality as an ``ordinary'' binary relation symbol satisfying the classical axioms of equality:
    \begin{enumerate}
        \item\label{i:ex-refl=}
        $\forall \vec x \, (t(\vec x )=t(\vec x))$ for every term $t(\vec x)$;
        \item\label{i:ex-alpha=}
        $\forall \vec x \, ((t(\vec x)=u(\vec x))\land \alpha(\vec x,t(\vec x)/y)\to\alpha(\vec x, u(\vec x)/y))$ for every formula $\alpha(\vec x,y)$ and for all terms $t(\vec x)$ and $u(\vec x)$ substitutable for $y$.
    \end{enumerate}
    It can be shown by induction on the complexity of the formulas that the scheme \eqref{i:ex-alpha=} can be replaced with
    \begin{enumerate}[label = (\arabic*$'$), ref = \arabic*$'$, start = 2]
        \item\label{i:ex-alpha0=} $\forall \vec x \, ((t(\vec x)=u(\vec x))\land \alpha(\vec x,t(\vec x)/y)\to\alpha(\vec x,u(\vec x)/y))$ for every \emph{quantifier-free} $\alpha(\vec x,y)$ and for all terms $t(\vec x)$ and $u(\vec x)$ substitutable for $y$.
    \end{enumerate}
    Since the axioms in \eqref{i:ex-refl=} and \eqref{i:ex-alpha0=} are universal sentences, we can apply the result in \cref{ex:sanitycheck}.
\end{example}

\begin{example}
    If $\T$ is a first-order theory in a language with equality, 
    a quantifier completion of $\LT^\T_{=,0}$
    is the syntactic doctrine $\LT_=^{\mathcal{U}}$, where $\mathcal{U}$ is the theory in the same language $\L$ whose axioms are all universal sentences derivable from $\T$.
    To prove this, the idea is that one can piggyback again on the case without equality (\cref{ex:quantif-compl-lt}), using a similar argument to the one in \cref{ex:sanitycheck=}.
\end{example}

\begin{theorem}
     Every elementary Boolean doctrine over a small category is an elementary quantifier-free fragment of an elementary first-order Boolean doctrine (namely, its quantifier completion).
\end{theorem}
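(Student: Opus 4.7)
The plan is to assemble the statement directly from the three key results already available in the excerpt, so the proof is essentially a one-line combination; the writeup just needs to make explicit the identifications involved.

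First, let $\P\colon\C\op\to\BA$ be an elementary Boolean doctrine with $\C$ small. By \cref{t:quantif-complet}, $\P$ admits a quantifier completion $(\id_\C,\mathfrak{i})\colon\P\to\P^\EA$ with $\mathfrak{i}$ componentwise injective. By \cref{p:qff-of-completion}, the image of $\mathfrak{i}$ is a quantifier-free fragment of $\P^\EA$; using the componentwise injectivity of $\mathfrak{i}$, we can (and do) identify $\P$ with this image, so that $\P$ itself is a quantifier-free fragment of $\P^\EA$.

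Next, by \cref{c:equality-only-p0}, since $\P$ is elementary, the quantifier completion $\P^\EA$ is elementary as well, and the Boolean doctrine morphism $(\id_\C,\mathfrak{i})\colon\P\to\P^\EA$ is elementary. The latter means precisely that for every $X\in\C$ the fibered equality $\delta_X^{\P^\EA}$ of $\P^\EA$ agrees with the image under $\mathfrak{i}_{X\times X}$ of the fibered equality $\delta_X^{\P}$ of $\P$. Under the identification of $\P$ with its image in $\P^\EA$, this gives $\delta_X^{\P^\EA}\in\P(X\times X)$ for every $X\in\C$.

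Therefore the hypotheses of \cref{d:whole-and-first-layer=}\eqref{i:qff1} are met: $\P$ is a quantifier-free fragment of the elementary first-order Boolean doctrine $\P^\EA$ containing each fibered equality. This is exactly the definition of an elementary quantifier-free fragment, concluding the proof. There is no real obstacle here, as all the substantive work is packaged in \cref{p:qff-of-completion} (which handles generation) and \cref{c:equality-only-p0} (which handles elementarity of the completion via \cref{p:equality-only-p0}); the only thing to be careful about is the identification of $\P$ with its isomorphic image under the componentwise injective unit $\mathfrak{i}$.
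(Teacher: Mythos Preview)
Your proof is correct and follows essentially the same approach as the paper: invoke \cref{p:qff-of-completion} to see that $\P$ is a quantifier-free fragment of $\P^\EA$, then use the elementarity transfer (you cite \cref{c:equality-only-p0}, the paper cites its source \cref{p:equality-only-p0} directly) to conclude that $\P^\EA$ is elementary with the same fibered equalities, so that $\P$ is an elementary quantifier-free fragment. One cosmetic point: the cross-reference ``\cref{d:whole-and-first-layer=}\eqref{i:qff1}'' is slightly off, since the label \texttt{i:qff1} belongs to \cref{d:whole-and-first-layer}, not to \cref{d:whole-and-first-layer=}; just cite \cref{d:whole-and-first-layer=} on its own.
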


\begin{proof}
    Let $\P$ be an elementary Boolean doctrine over a small category. By \cref{p:qff-of-completion}, $\P$ is a quantifier-free fragment of its quantifier completion $\P^\EA$. By \cref{p:equality-only-p0}, $\P^\EA$ is elementary, with the same fibered equalities as $\P$. Thus, $\P$ is an elementary quantifier-free fragment of $\P^\EA$. 
\end{proof}

This shows that, under a smallness assumption, the structures occurring as elementary quantifier-free fragments are precisely the elementary Boolean doctrines.

\section{Future work}\label{s:future-work}

In this paper, we presented an algebraic/categorical approach to quantifier-free formulas and quantifier alternation depth modulo a theory.
In \cref{sec:char-qff}, we characterized the structures occurring as quantifier-free fragments as precisely the Boolean doctrines (\cref{t:character-0th-layer}).
This is the first step of a larger program of finding the intrinsic algebraic/categorical structure of each level of the quantifier alternation hierarchy:
\begin{question}
    For each $m \in \N$, what are the structures $(\P_0, \dots, \P_m)$ that are part of some QA-stratification of a first-order Boolean doctrine, or, equivalently, of a QA-stratified Boolean doctrine $(\P_n)_{n \in \N}$?
\end{question}

While in this paper we settled the case $m = 0$ (Boolean doctrines), the remaining ones are yet to be answered.
To answer the question in its generality, it is enough to solve the case $m = 1$, since only the last level matters.
Our conjecture is that the structures $(\P_0, \P_1)$ that are part of a QA-stratified Boolean doctrine are precisely the QA-one-step Boolean doctrines.
To verify the correctness of this conjecture, a completeness theorem similar to G\"odel’s completeness theorem might be useful. (G\"odel’s theorem would be recovered as the case $\P_0 = \P_1$.)
A further interesting problem is to give a constructive proof of the fact that every QA-one-step Boolean doctrine embeds into some first-order Boolean doctrine; an option might be using Boolean-valued models and canonical extensions along the lines of \cref{t:character-0th-layer}.

Furthermore, a long-term goal is to provide an explicit layer-by-layer construction of the quantifier completion.
In the preprint \cite{AbbadiniGuffanti2}, we show how to obtain the first step, i.e.\ how to obtain the layer $\P_1$ free over $\P_0$.
This amounts to a doctrinal version of Herbrand's theorem restricted to formulas of quantifier-alternation depth at most  $1$.
An interesting direction to pursue is whether one can give a version of Herbrand’s theorem to build the free ``next level'' $\P_{n+2}$ from $\P_n \subseteq \P_{n+1}$. 
(This may lead to another constructive proof that QA-one-step Boolean doctrines embed in first-order Boolean doctrines.)

Moreover, motivated by \cite{GehrkeJaklEtAL2023}, we plan to obtain a Stone dual description (in terms of Joyal's polyadic spaces \cite{Joyal1971,Marques2023,vanGoolMarques2024}) of the step-by-step construction of the quantifier completion, similarly to what has been done for adding layers of modality to Boolean algebras.

Finally, in a different direction, we report here the question from \cref{r:comparison}: is the notion of quantifier-free formula modulo a universal theory intrinsic? In other words,
\begin{question} \label{q:intrinsic}
    Does any isomorphism between the quantifier-completions of two Boolean doctrines $\P$ and $\R$ restrict to an isomorphism between $\P$ and $\R$?
\end{question}

\appendix

\section{Classical first-order sequent calculus (with contexts)}\label{app:calculus}

We recall here the sequent calculus \emph{with contexts} for classical first-order logic; here, each sequent is equipped with a finite list $\vec y = (y_1, \dots, y_m)$ of distinct variables\footnote{One may use finite sets in place of finite lists of distinct variables.}, and the free variables of every formula in the sequent belong to $\{y_1, \dots, y_m\}$.
Compared to the usual calculus, the key difference is that some of the rules for quantifiers change the context.
Additionally, the empty set is a possible model in this calculus: for example, the sequent $\Rightarrow \exists x \top$ in the empty context is not provable.
This calculus and its variations circulate among researchers in categorical logic since the '70s, and are embodied more or less explicitly in the texts treating the internal language of toposes and other categories of logic.
Some references are 
\cite[Part~II, Sec.~1]{Lambek1988} (which treats the intuitionistic case),
\cite{sequentcalculus} and \cite{BertaMeloni1987}.

We write $\alpha : \vec y$ to mean that the free variables of the first-order formula $\alpha$ belong to $\{y_1, \dots, y_m\}$. For a set $\Gamma$ of formulas, we write $\Gamma : \vec{y}$ to mean that, for each $\alpha \in \Gamma$, we have $\alpha : \vec{y}$.
In the following, $\alpha,\beta, \alpha_0,\alpha_1 : \vec y$ and $\zeta:(\vec y,x)$ (where $x \notin \{y_1, \dots, y_n\}$) are formulas, and $\Gamma, \Delta,\Gamma_1,\Gamma_2,\Delta_1,\Delta_2:\vec y$ finite lists of formulas.

\smallskip

\begin{center}
   \fbox{Structural Rules}
\end{center}
\begin{minipage}{0.5\textwidth}
    \begin{prooftree}
        \AxiomC{$\Gamma\Rightarrow_{\vec y}\Delta$}
        \RightLabel{\ (LW)}
        \UnaryInfC{$\alpha,\,\Gamma\Rightarrow_{\vec y}\Delta$}
    \end{prooftree}
\end{minipage}%
\begin{minipage}{0.5\textwidth}
    \begin{prooftree}
        \AxiomC{$\Gamma\Rightarrow_{\vec y}\Delta$}
        \RightLabel{\ (RW)}
        \UnaryInfC{$\Gamma \Rightarrow_{\vec y} \Delta,\, \alpha$}
    \end{prooftree}
\end{minipage}%
    
\noindent
\begin{minipage}{0.5\textwidth}
    \begin{prooftree}
        \AxiomC{$\alpha, \,\alpha, \,\Gamma \Rightarrow_{\vec y} \Delta$}
        \RightLabel{\ (LC)}
        \UnaryInfC{$\alpha, \,\Gamma \Rightarrow_{\vec y} \Delta$}
    \end{prooftree}
\end{minipage}%
\begin{minipage}{0.5\textwidth}
    \begin{prooftree}
        \AxiomC{$\Gamma \Rightarrow_{\vec y} \Delta,\, \alpha,\, \alpha$}
        \RightLabel{\ (RC)}
        \UnaryInfC{$\Gamma \Rightarrow_{\vec y}\Delta,\, \alpha$}
    \end{prooftree}
\end{minipage}%

\noindent
\begin{minipage}{0.5\textwidth}
    \begin{prooftree}
        \AxiomC{$\alpha, \,\beta, \,\Gamma \Rightarrow_{\vec y} \Delta$}
        \RightLabel{\ (LE)}
        \UnaryInfC{$\beta,\,\alpha, \,\Gamma \Rightarrow_{\vec y} \Delta$}
    \end{prooftree}
\end{minipage}%
\begin{minipage}{0.5\textwidth}
    \begin{prooftree}
        \AxiomC{$\Gamma \Rightarrow_{\vec y} \Delta,\, \alpha,\, \beta$}
        \RightLabel{\ (RE)}
        \UnaryInfC{$\Gamma \Rightarrow_{\vec y}\Delta,\,\beta,\, \alpha$}
    \end{prooftree}
\end{minipage}%

\bigskip

\noindent
\begin{minipage}[t]{0.4\textwidth}
    \begin{center}
    \fbox{Cut Rule}
\end{center}
\begin{center}
    \begin{prooftree}
        \AxiomC{$\Gamma_1\Rightarrow_{\vec y}\Delta_1,\, \alpha$}\AxiomC{$\alpha,\,\Gamma_2\Rightarrow_{\vec y}\Delta_2$}
        \RightLabel{\ (Cut)}
        \BinaryInfC{$\Gamma_1, \,\Gamma_2 \Rightarrow_{\vec y} \Delta_1,\, \Delta_2$}
    \end{prooftree}
\end{center}
\end{minipage}%
\begin{minipage}[t]{0.2\textwidth}
    \begin{center}
    \fbox{Identity Axioms}
\end{center}
\begin{prooftree}
    \AxiomC{}
    \UnaryInfC{$\alpha\Rightarrow_{\vec y}\alpha$}
\end{prooftree}
\end{minipage}%
\begin{minipage}[t]{0.4\textwidth}
    \begin{center}
    \fbox{Context Enlargement Rule}
\end{center}
\begin{center}
    \begin{prooftree}
        \AxiomC{$\Gamma \Rightarrow_{\vec y} \Delta$}
        \UnaryInfC{$\Gamma \Rightarrow_{(\vec y, x)}\Delta$}
    \end{prooftree}
    \begin{center}
        (we stress that $\Gamma:\vec y$, $\Delta:\vec y$, $x\notin\{y_1,\dots,y_m\}$)
    \end{center}
\end{center}
\end{minipage}%

\smallskip
    
\begin{center}
    \fbox{Logical Rules for Connectives}
\end{center}
\begin{minipage}{0.5\textwidth}
    \phantom{a}
\end{minipage}%
\begin{minipage}{0.5\textwidth}
    \begin{prooftree}
        \AxiomC{}
        \RightLabel{\ (R$\top$)}\UnaryInfC{$\Gamma\Rightarrow_{\vec y}\Delta,\,\top$}
    \end{prooftree}
\end{minipage}%

\noindent
\begin{minipage}{0.5\textwidth}
     \begin{prooftree}
        \AxiomC{}
        \RightLabel{\ (L$\bot$)}
        \UnaryInfC{$\bot, \,\Gamma\Rightarrow_{\vec y}\Delta$}
    \end{prooftree}
\end{minipage}
\begin{minipage}{0.5\textwidth}
\end{minipage}%

\noindent
\begin{minipage}{0.5\textwidth}
    \begin{prooftree}
        \AxiomC{$\alpha_i,\,\Gamma\Rightarrow_{\vec y}\Delta$}
        \RightLabel{\ $i=0,1$ (L$\land$)}
        \UnaryInfC{$\alpha_0 \land \alpha_1,\,\Gamma\Rightarrow_{\vec y}\Delta$}
    \end{prooftree}
\end{minipage}%
\begin{minipage}{0.5\textwidth}
    \begin{prooftree}
        \AxiomC{$\Gamma\Rightarrow_{\vec y}\Delta, \,\alpha$}\AxiomC{$\Gamma\Rightarrow_{\vec y}\Delta, \,\beta$}
        \RightLabel{\ (R$\land$)}
        \BinaryInfC{$\Gamma\Rightarrow_{\vec y}\Delta, \,\alpha\land\beta$}
    \end{prooftree}
\end{minipage}%
    
\noindent
\begin{minipage}{0.5\textwidth}
    \begin{prooftree}
        \AxiomC{$\alpha, \,\Gamma\Rightarrow_{\vec y}\Delta, $}\AxiomC{$\beta,\,\Gamma\Rightarrow_{\vec y}\Delta$}
        \RightLabel{\ (L$\lor$)}
        \BinaryInfC{$\alpha \lor \beta, \, \Gamma \Rightarrow_{\vec y} \Delta$}
    \end{prooftree}
\end{minipage}%
\begin{minipage}{0.5\textwidth}
    \begin{prooftree}
        \AxiomC{$\Gamma\Rightarrow_{\vec y}\Delta, \,\alpha_i$}
        \RightLabel{\ $i=0,1$ (R$\lor$)}
        \UnaryInfC{$\Gamma\Rightarrow_{\vec y}\Delta, \,\alpha_0\lor\alpha_1,\,$}
    \end{prooftree}
\end{minipage}%

\noindent
\begin{minipage}{0.5\textwidth}
    \begin{prooftree}
        \AxiomC{$\Gamma \Rightarrow_{\vec y} \Delta, \,\alpha$}
        \RightLabel{\ (L$\lnot$)}
        \UnaryInfC{$\lnot\alpha, \,\Gamma\Rightarrow_{\vec y}\Delta$}
    \end{prooftree}
\end{minipage}%
\begin{minipage}{0.5\textwidth}
    \begin{prooftree}
        \AxiomC{$\alpha,\,\Gamma\Rightarrow_{\vec y}\Delta$}
        \RightLabel{\ (R$\lnot$)}
        \UnaryInfC{$\Gamma \Rightarrow_{\vec y} \Delta,\,\lnot \alpha$}
    \end{prooftree}
\end{minipage}%
    
\noindent
\begin{minipage}{0.5\textwidth}
    \begin{prooftree}
        \AxiomC{$\Gamma\Rightarrow_{\vec y}\Delta, \,\alpha$}\AxiomC{$\beta,\,\Gamma\Rightarrow_{\vec y}\Delta$}
        \RightLabel{\ (L$\to$)}
        \BinaryInfC{$\alpha\to\beta,\,\Gamma\Rightarrow_{\vec y}\Delta$}
    \end{prooftree}
\end{minipage}%
\begin{minipage}{0.5\textwidth}
    \begin{prooftree}
        \AxiomC{$\alpha,\,\Gamma\Rightarrow_{\vec y}\Delta,\,\beta$}
        \RightLabel{\ (R$\to$)}
        \UnaryInfC{$\Gamma \Rightarrow_{\vec y} \Delta,\,\alpha\to\beta$}
    \end{prooftree}
\end{minipage}%

\smallskip

\begin{center}
    \fbox{Logical Rules for Quantifiers}
\end{center}

\noindent
\begin{minipage}{0.5\textwidth}
    \begin{prooftree}
        \AxiomC{$\zeta[t/x], \,\Gamma\Rightarrow_{\vec y}\Delta$}
         \RightLabel{\ (L$\forall$)}
        \UnaryInfC{$\forall x\,\zeta,\,\Gamma\Rightarrow_{\vec y}\Delta$}
    \end{prooftree}
\end{minipage}%
\begin{minipage}{0.5\textwidth}
    \begin{prooftree}
        \AxiomC{$\Gamma \Rightarrow_{(\vec y, x)} \Delta,\,\zeta$}
        \RightLabel{\ (R$\forall$)}
        \UnaryInfC{$\Gamma\Rightarrow_{\vec y}\Delta,\,\forall x\,\zeta$}
    \end{prooftree}
\end{minipage}%

\noindent
\begin{minipage}{0.5\textwidth}
    \begin{center}
        (with $t$ a term with variables in $\vec y$ substitutable for $x$; we stress that $x \notin \{y_1, \dots, y_n\}$.)
    \end{center}
\end{minipage}%
\begin{minipage}{0.5\textwidth}
    \begin{center}
        (we stress that $\Gamma:\vec y$, $\Delta:\vec y$, $x\notin\{y_1,\dots,y_m\}$)
    \end{center}
\end{minipage}%

\smallskip

\noindent
\begin{minipage}{0.5\textwidth}
    \begin{prooftree}
        \AxiomC{$\zeta, \,\Gamma\Rightarrow_{(\vec y, x)} \Delta$}
        \RightLabel{\ (L$\exists$)}
        \UnaryInfC{$\exists x\,\zeta,\,\Gamma\Rightarrow_{\vec y}\Delta$}
    \end{prooftree}
\end{minipage}%
\begin{minipage}{0.5\textwidth}
    \begin{prooftree}
        \AxiomC{$\Gamma\Rightarrow_{\vec y}\Delta, \,\zeta[t/x]$}
        \RightLabel{\ (R$\exists$)}
        \UnaryInfC{$\Gamma\Rightarrow_{\vec y}\Delta, \exists x\,\zeta$}
    \end{prooftree}
\end{minipage}%

\noindent
\begin{minipage}{0.5\textwidth}
    \begin{center}
        (we stress that $\Gamma:\vec y$, $\Delta:\vec y$, $x\notin\{y_1,\dots,y_m\}$)
    \end{center}
\end{minipage}%
\begin{minipage}{0.5\textwidth}
    \begin{center}
        (with $t$ a term with variables in $\vec y$ substitutable for $x$; we stress that $x \notin \{y_1, \dots, y_n\}$.)
    \end{center}
\end{minipage}%

\begin{center}
    We also assume that we can rename bound variables as appropriate.
\end{center}

\smallskip

If the language has equality, one adds the following two schemes.

\smallskip

\begin{center}
    \fbox{Rules for equality}
\end{center}
\begin{minipage}{0.5\textwidth}
    \begin{prooftree}
        \AxiomC{}
        \UnaryInfC{$\Rightarrow_{\vec y} t=t$}
    \end{prooftree}
\end{minipage}%
\begin{minipage}{0.5\textwidth}
    \begin{prooftree}
        \AxiomC{}
        \UnaryInfC{$t=u,\,\zeta[t/x]\Rightarrow_{\vec y}\zeta[u/x]$}
    \end{prooftree}
\end{minipage}%

\noindent
\begin{minipage}{0.5\textwidth}
\begin{center}
    (with $t$ a term with variables in $\vec y$)
    \end{center}
\end{minipage}%
\begin{minipage}{0.5\textwidth}
\begin{center}
    (with $t$, $u$ terms with var.\ in $\vec y$ substitutable for $x$)
    \end{center}
\end{minipage}%

\begin{remark}[Interpretation of formulas for a language with equality]\label{r:int-fmlas-=}
    
    Let $\L=(\mathbb{F},\bigcup_{n\in\N}\mathbb{P}_n)$ be a first-order language with equality, with $\mathbb{P}_n$ the set of $n$-ary predicate symbols. We consider an enumeration $x_1, x_2, \dots$ of the variables. Let $\P\colon\ctx\op\to\BA$ be an elementary first-order Boolean doctrine and let $(\mathbb{I}_n\colon\mathbb{P}_n\to\P(x_1,\dots,x_n))_{n\in\N}$ be a family of functions.
    We define inductively on the complexity of the formulas a function $\mathcal{I}$ from the set $\mathrm{Form}(\L)$
    of $\L$-formulas, resuming the definition from \cref{r:int-fmlas}:
    \begin{enumerate}[start=4]
        \item for all morphisms $ t\colon\vec x\to(y)$ and $ u\colon\vec x\to(y)$ in $\ctx$, we define
        \[\mathcal{I}( (t(\vec x)=u(\vec x)):\vec x)=\P(\ple{t,u})(\delta_{(y)}).\]
    \end{enumerate}
\end{remark}

\begin{definition}[Validity of a sequent in a first-order Boolean doctrine]\label{d:valid-sequent}
    Let $\L=(\mathbb{F},\bigcup_{n\in\N}\mathbb{P}_n)$ be a first-order language (with equality), with $\mathbb{P}_n$ the set of $n$-ary predicate symbols. Let $\Gamma\Rightarrow_{\vec y}\Delta$ be an $\L$-sequent. Let $\P\colon\ctx\op\to\BA$ be a(n elementary) first-order Boolean doctrine, let $(\mathbb{I}_n\colon\mathbb{P}_n\to\P(x_1,\dots,x_n))_{n\in\N}$ be a family of functions and let $\mathcal{I}$ be its extension in the sense of \cref{r:int-fmlas} (\cref{r:int-fmlas-=}).
    We say that $\Gamma\Rightarrow_{\vec y}\Delta$ is \emph{valid} in $(\P,\mathcal{I})$ if, in $\P(\vec y)$,
    \[
    \mathcal{I} \mleft(\bigwedge\Gamma:\vec y\mright)\leq\mathcal{I}\mleft(\bigvee\Delta:\vec y\mright).
    \]
\end{definition}

\begin{lemma}[The rules of calculus hold in first-order Boolean doctrines over categories of contexts]\label{l:rules-calc}
    Let $(\mathbb{F},\bigcup_{n\in\N}\mathbb{P}_n)$ be a first-order language (with equality), with $\mathbb{P}_n$ the set of $n$-ary predicate symbols. Let 
    \begin{prooftree}
        \AxiomC{$A_1 \quad \cdots \quad A_k$}
        \UnaryInfC{$C$}
    \end{prooftree}
    be an instance of a rule, with $A_1,\dots,A_k, C$ sequents. Let $\P\colon\ctx\op\to\BA$ be a(n elementary) first-order Boolean doctrine and $(\mathbb{I}_n\colon\mathbb{P}_n\to\P(\vec x))_{n\in\N}$ a family of functions such that $A_i$ is valid in $(\P,\mathcal{I})$ for all $i$, with $\mathcal{I}$ the extension of $(\mathbb{I}_n)_{n \in \N}$ as in \cref{r:int-fmlas} (\cref{r:int-fmlas-=}).
    Then, $C$ is valid in $(\P,\mathcal{I})$.
\end{lemma}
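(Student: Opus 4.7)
The plan is a case analysis on the inference rule, grouped by which algebraic ingredient of $\P$ the case uses rather than rule-by-rule. The structural rules (weakening, contraction, exchange), the cut rule, and the logical rules for the propositional connectives (L/R$\top$, L/R$\bot$, L/R$\land$, L/R$\lor$, L/R$\lnot$, L/R$\to$) reduce to elementary identities in the Boolean algebras $\P(\vec y)$: by \cref{r:int-fmlas}\eqref{i:int-bool}, $\mathcal{I}$ sends each propositional connective to the corresponding Boolean operation, so each such case becomes a two-line calculation. For cut, for instance, setting $a_i=\mathcal{I}(\bigwedge\Gamma_i:\vec y)$, $d_i=\mathcal{I}(\bigvee\Delta_i:\vec y)$ and $c=\mathcal{I}(\alpha:\vec y)$, one writes $a_1\land a_2=(a_1\land a_2\land c)\lor(a_1\land a_2\land \lnot c)$ and applies both hypotheses. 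The context-enlargement rule is the monotonicity of $\P(\pr_1)\colon \P(\vec y)\to \P(\vec y,x)$, with eq.~\eqref{eq:I-nat} identifying its image on $\Gamma,\Delta$ with $\mathcal{I}(\bigwedge\Gamma:(\vec y,x))$ and $\mathcal{I}(\bigvee\Delta:(\vec y,x))$ (noting that $\Gamma,\Delta$ have variables in $\vec y$).

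The quantifier rules use the adjunctions $\P(\pr_1)\dashv\fa{x}{\vec y}$ and $\ex{x}{\vec y}\dashv\P(\pr_1)$. For (R$\forall$) and (L$\exists$), the premise lives in $\P(\vec y,x)$ and, thanks to eq.~\eqref{eq:I-nat}, rewrites as $\P(\pr_1)(u)\leq \P(\pr_1)(v)\lor\mathcal{I}(\zeta)$ (respectively $\mathcal{I}(\zeta)\land\P(\pr_1)(u)\leq\P(\pr_1)(v)$) for $u,v\in\P(\vec y)$; since $\P(\pr_1)$ is a Boolean homomorphism, this puts itself in the form $\P(\pr_1)(u\land \lnot v)\leq\mathcal{I}(\zeta)$ (resp.\ $\mathcal{I}(\zeta)\leq\P(\pr_1)(v\lor\lnot u)$), and the adjunction transports it to the required inequality in $\P(\vec y)$. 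For (L$\forall$) and (R$\exists$), the key step is $\mathcal{I}(\forall x\,\zeta:\vec y)\leq\mathcal{I}(\zeta[t/x]:\vec y)$ (dually for $\exists$): starting from the counit $\P(\pr_1)(\fa{x}{\vec y}(\mathcal{I}(\zeta)))\leq\mathcal{I}(\zeta)$ in $\P(\vec y,x)$, one reindexes along $\ple{\id_{\vec y},t}\colon\vec y\to(\vec y,x)$ and uses $\pr_1\circ\ple{\id_{\vec y},t}=\id_{\vec y}$ together with eq.~\eqref{eq:I-nat} on the right-hand side to produce $\mathcal{I}(\forall x\,\zeta:\vec y)\leq\mathcal{I}(\zeta[t/x]:\vec y)$.

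The equality rules require the elementarity of $\P$. For reflexivity $\Rightarrow_{\vec y}t=t$, I factor $\ple{t,t}=\Delta_{(x)}\circ t$ in $\ctx$, whence $\mathcal{I}(t=t:\vec y)=\P(t)(\P(\Delta_{(x)})(\delta_{(x)}))\geq\P(t)(\top)=\top$ by condition \eqref{i:def=1} of \cref{def:equality}. For the substitutivity scheme $t=u,\zeta[t/x]\Rightarrow_{\vec y}\zeta[u/x]$, the idea is to first upgrade condition \eqref{i:def=2} to its parametric version
\[
\P(\ple{\pr_1,\pr_2})(\alpha)\land\P(\ple{\pr_2,\pr_3})(\delta_{(x)})\leq\P(\ple{\pr_1,\pr_3})(\alpha)\qquad(\alpha\in\P(\vec y\times(x))),
\]
by instantiating condition \eqref{i:def=2} with $X$ replaced by $\vec y\times(x)$ and reindexing along $\ple{\pr_1,\pr_2,\pr_1,\pr_3}\colon \vec y\times(x)\times(x)\to \vec y\times(x)\times\vec y\times(x)$, then reducing $\delta_{\vec y\times(x)}$ to $\delta_{(x)}$ via condition \eqref{i:def=3} combined with the fact that $\P(\ple{\pr_1,\pr_1})(\delta_{\vec y})\geq\top$ (from condition \eqref{i:def=1}). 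Then reindexing the parametric form along $\ple{\id_{\vec y},t,u}\colon\vec y\to\vec y\times(x)\times(x)$ with $\alpha=\mathcal{I}(\zeta:(\vec y,x))$ yields exactly $\mathcal{I}(\zeta[t/x]:\vec y)\land\mathcal{I}(t=u:\vec y)\leq\mathcal{I}(\zeta[u/x]:\vec y)$.

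The proof is conceptually routine; the main obstacle is bookkeeping. Each rule dictates a specific tupling/diagonal/projection in $\ctx$, and in every case the verification hinges on translating a syntactic substitution into the corresponding reindexing via eq.~\eqref{eq:I-nat} and confirming that the side formulas $\Gamma,\Delta$ genuinely live over the smaller context $\vec y$, so that eq.~\eqref{eq:I-nat} along $\pr_1$ applies. The only step beyond this dictionary is the parametric form of condition \eqref{i:def=2}, which I would isolate as a brief preliminary lemma; once it is established, each of the twenty-odd rules collapses to a three-line calculation.
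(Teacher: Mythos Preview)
Your proposal is correct and matches the paper's proof in structure and in each key step: propositional and structural rules via Boolean identities, context enlargement via monotonicity of $\P(\pr_1)$, (L$\forall$)/(R$\exists$) via the counit reindexed along $\ple{\id_{\vec y},t}$, (R$\forall$)/(L$\exists$) via transposition across the adjunction, and the equality rules via conditions \eqref{i:def=1}--\eqref{i:def=3} of \cref{def:equality} combined exactly as you describe. For (R$\forall$) your Boolean rewriting $\P(\pr_1)(u\land\lnot v)\leq\mathcal{I}(\zeta)$ before transposing is a slight shortcut over the paper's route, which transposes first and then isolates the dual-Frobenius inequality $\fa{(x)}{\vec y}(\P(\pr_1)(\delta)\lor\varphi)\leq\delta\lor\fa{(x)}{\vec y}\varphi$ as a general fact about adjunctions between Boolean algebras, but the two computations are equivalent.
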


\begin{proof}
    The structural rules, the logical rules for connectives and the identity axioms are trivially satisfied.
    The cut rule is satisfied because of the lattice distributivity of each fiber of $\P$, and the context enlargement rule because of the monotonicity of $\P(\pr_1)\colon\P(\vec y)\to\P(\vec y,x)$.
    
    Let us prove that the logical rules for quantifiers hold.
    
    \noindent
    \begin{minipage}{0.5\textwidth}
        \begin{prooftree}
            \AxiomC{$\zeta[t/x], \,\Gamma\Rightarrow_{\vec y}\Delta$}
             \RightLabel{\ (L$\forall$)}
            \UnaryInfC{$\forall x\,\zeta,\,\Gamma\Rightarrow_{\vec y}\Delta$}
        \end{prooftree}
    \end{minipage}%
    \begin{minipage}{0.5\textwidth}
        \begin{prooftree}
            \AxiomC{$\Gamma \Rightarrow_{(\vec y, x)} \Delta,\,\zeta$}
            \RightLabel{\ (R$\forall$)}
            \UnaryInfC{$\Gamma\Rightarrow_{\vec y}\Delta,\,\forall x\,\zeta$}
        \end{prooftree}
    \end{minipage}%
    
    For (L$\forall$), let  $\gamma,\delta\in\P(\vec y)$, $\varphi\in\P(\vec y,x)$ and $t\colon\vec y\to (x)$ with  $\P(\ple{\id,t})(\varphi)\land\gamma\leq\delta$ in $\P(\vec y)$. Since $\P(\pr_1)\fa{(x)}{\vec y}\varphi\leq\varphi$ (counit of the adjunction) in $\P(\vec y,x)$, by applying $\P(\ple{\id,t})$ to both sides of the inequality we get $\fa{(x)}{\vec y}\varphi\leq\P(\ple{\id,t})(\varphi)$ in $\P(\vec y)$. Thus in $\P(\vec y)$ we have
    \[\fa{(x)}{\vec y}\varphi\land\gamma\leq\P(\ple{\id,t})(\varphi)\land\gamma\leq\delta.\]
    Taking $\varphi\coloneqq\mathcal{I}(\zeta:(\vec y,x))$, $\gamma\coloneqq\mathcal{I}(\bigwedge\Gamma:\vec y)$ and $\delta\coloneqq\mathcal{I}(\bigvee\Delta:\vec y)$, and using eq.~\eqref{eq:I-nat}, we get the desired result.
    
    For (R$\forall$), let $\gamma,\delta\in\P(\vec y)$ and $\varphi\in\P(\vec y,x)$ with $\P(\pr_1)(\gamma)\leq\P(\pr_1)(\delta)\lor\varphi$ in $\P(\vec y,x)$; then, in $\P(\vec y)$,
    \begin{equation}\label{eq:dual-frob}
    \gamma\leq\fa{(x)}{\vec y}(\P(\pr_1)(\delta)\lor\varphi)\leq\delta\lor\fa{(x)}{\vec y}\varphi.
    \end{equation}
    The first inequality follows by adjunction, while the second one is proved as follows:
    let $L\dashv R$ be an adjunction between Boolean algebras $A$ and $B$, with $L\colon B\to A$ a Boolean homomorphism and $R\colon A\to B$ a monotone function. Then, for all $a\in A$ and $b\in B$ we have $R(a\lor L(b))\leq R(a)\lor b$. Indeed,
    \[R(a\lor L(b))\leq R(a)\lor b\iff R(a\lor L(b))\land \lnot b\leq R(a)\iff LR(a\lor L(b))\land L(\lnot b)\leq a,
    \]
    which holds since   
    \[LR(a\lor L(b))\land L(\lnot b)\leq (a\lor L(b))\land L(\lnot b)=(a\land L(\lnot b))\lor(L(b)\land L(\lnot b))\leq a.
    \]
    This proves \eqref{eq:dual-frob}. Taking $\varphi\coloneqq\mathcal{I}(\zeta:(\vec y,x))$, $\gamma\coloneqq\mathcal{I}(\bigwedge\Gamma:\vec y)$ and $\delta\coloneqq\mathcal{I}(\bigvee\Delta:\vec y)$ gives the desired result.  
    The rules for the existential quantifiers are also satisfied with an analogous proof.

    If the language $\L$ has equality, we are left to show that the two rules for equality are satisfied.

    \noindent
    \begin{minipage}{0.5\textwidth}
        \begin{prooftree}
            \AxiomC{}
            \UnaryInfC{$\Rightarrow_{\vec y} t=t$}
        \end{prooftree}
    \end{minipage}%
    \begin{minipage}{0.5\textwidth}
        \begin{prooftree}
            \AxiomC{}
            \UnaryInfC{$t=u, \,\zeta[t/x] \Rightarrow_{\vec y}\zeta[u/x]$}
        \end{prooftree}
    \end{minipage}%
    
    The sequent $\Rightarrow_{\vec y} t=t$ is valid in $(\P,\mathcal{I})$ since
    \[\mathcal{I}(t=t:\vec y)=\P(\ple{t,t})(\delta_{(x)})=\P(t)(\P(\Delta_{(x)})(\delta_{(x)}))=\P(t)(\top_{\P(x)})=\top_{\P(\vec y)},\]
    where the second-to-last equality follows from the fact that $\P$ is elementary
    (see \cref{def:equality}\eqref{i:def=1}).
    
    We now show that the sequent $t=u,\,\zeta[t/x]\Rightarrow_{\vec y}\zeta[u/x]$ is valid in $(\P,\mathcal{I})$. First of all, observe that
    \[
    \mathcal{I}(t=u\land\zeta[t/x]:\vec y)=\mathcal{I}(t=u:\vec y)\land \mathcal{I}(\zeta[t/x]:\vec y)=\P(\ple{t,u})(\delta_{(x)})\land \P(\ple{\id_{\vec y},t})(\mathcal{I}(\zeta:(\vec y,x))).\]
    
    We write $\varphi\coloneqq\mathcal{I}(\zeta:(\vec y,x))$; by \cref{def:equality}\eqref{i:def=2}, in $\P(\vec y,x,{\vec y}',x')$ we have 
    \[\P(\ple{\pr_1,\pr_2})(\varphi)\land\delta_{(\vec y,x)}\leq\P(\ple{\pr_3,\pr_4})(\varphi). \]
    Applying $\P(\ple{\id_{\vec y},t,\id_{\vec y},u})$ on both sides, and recalling \cref{def:equality}\eqref{i:def=1} and \eqref{i:def=3}, we get in $\P(\vec y)$
    \[\P(\ple{\id_{\vec y},t})(\varphi)\land\P(\ple{t,u})(\delta_{(x)})\leq\P(\ple{\id_{\vec y},u})(\varphi), \]
    i.e.,
    \[
    \mathcal{I}(t=u\land\zeta[t/x]:\vec y)\leq\mathcal{I}(\zeta[u/x]:\vec y),
    \]
    and hence the sequent $t=u,\,\zeta[t/x]\Rightarrow_{\vec y}\zeta[u/x]$ is valid in $(\P,\mathcal{I})$.
\end{proof}

\section{A doctrine without a least quantifier-free fragment} \label{s:no-smallest}

We exhibit a first-order Boolean doctrine without a least quantifier-free fragment. The existence of such a first-order Boolean doctrine should come as no surprise, since the intersection of generating sets typically fails to be generating, and so algebraic structures typically fail to have a least generating set (or alike).

Consider the language consisting of an $n$-ary relation symbol $R_n$ for each $n \in \N$.
For each $n \in \N$, let $\alpha_n$ be the sentence
\[
    \forall x_1\,\dots\,\forall x_n\,(R_n(x_1,\dots,x_n)\leftrightarrow\exists x_{n +1}\, R_{n+1}(x_1,\dots,x_n,x_{n+1})),
\]
and define the theory $\T \coloneqq \{\alpha_n \mid n \in \N\}$.
We will show that the syntactic doctrine $\LT^\T$ has no least quantifier-free fragment (\cref{p:no-least-qff}).

The proof idea is the following: let $\mathbf{T}$ be the least Boolean subfunctor of $\LT^\T$, i.e.\ $\mathbf{T}(\{x_1, \dots, x_n\}) = \{ \top, \bot\}$.
We will prove that there is a decreasing sequence of quantifier-free fragments whose intersection is $\mathbf{T}$ (\cref{l:intersection-is-top-bot}).
In virtue of the fact that $\mathbf{T}$ is too ``small'' to be a quantifier-free fragment (\cref{f:does-not-generate}), we will then conclude that there is no least quantifier-free fragment of $\LT^\T$ (\cref{p:no-least-qff}).

Before we start, we give an intuition on the models of $\T$.
We can think of a model of $\T$ as a pair $(A, \mathcal{W})$, with $A$ (the ``alphabet'') a set and $\mathcal{W}$ (the ``language'') a set of finite words with letters in $A$ such that:
\begin{enumerate}
    \item $\mathcal{W}$ is closed under prefixes;
    \item every word in $\mathcal{W}$ can be properly extended to another word still in $\mathcal{W}$. Said differently, for every $w \in \mathcal{W}$ there is $w' \in \mathcal{W}$ of which $w$ is a proper prefix.
\end{enumerate}
A model $A$ of $\T$ can be understood as one such pair by taking the underlying set $A$ as the alphabet and defining the language $\mathcal{W}$ as the set whose words of length $n$ (for any $n \in \N$) are the words $a_1 \cdots a_n$ such that $(a_1, \dots, a_n)$ belongs to the interpretation of $R_n$ in the model $A$.
Conversely, a pair $(A, \mathcal{W})$ can be understood as a model of $\T$ by taking $A$ as the underlying set and defining the interpretation of $R_n$ (for each $n \in \N$) to be the set of tuples $(a_1, \dots, a_n)$ such that the word $a_1 \cdots a_n$ belongs to $\mathcal{W}$.

Let us first prove that $\mathbf{T}$ is not a quantifier-free fragment.

\begin{fact} \label{f:does-not-generate}
    $\mathbf{T}$ is not a quantifier-free fragment of $\LT^\T$. 
\end{fact}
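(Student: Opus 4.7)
My plan is to exhibit a Boolean subfunctor $\mathbf{F}$ of $\LT^\T$ that contains $\mathbf{T}$, is already closed under reindexings, Boolean operations, and quantifiers, yet is strictly smaller than $\LT^\T$. Such an $\mathbf{F}$ rules out $\mathbf{T}$ as a quantifier-free fragment: the remark following \cref{d:whole-and-first-layer} guarantees that the iterates $\P_n = (\B\circ\Q)^n(\mathbf{T})$ stay inside $\mathbf{F}$, so their union fails to exhaust $\LT^\T$, contradicting the generation axiom \cref{d:whole-and-first-layer}\eqref{i:qff3}.

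For $\mathbf{F}$ I will take the subfunctor spanned by \emph{relation-free} formulas, i.e.\ those built from $\top,\bot,\land,\lor,\lnot,\forall,\exists$ and variables, with no occurrence of any $R_n$. Since the signature has no function symbols either, morphisms in $\ctx$ act by renaming variables, so reindexings preserve relation-freeness; Boolean combinations and quantifications clearly do too. Hence $\mathbf{F}$ is a Boolean subfunctor of $\LT^\T$ closed under $\B\cup\S\cup\Q$ and $\mathbf{T}\subseteq\mathbf{F}$, so a straightforward induction gives $\P_n\subseteq\mathbf{F}$ for every $n\in\N$.

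The heart of the argument is the strict inclusion $\mathbf{F}\subsetneq\LT^\T$. I propose $[R_1(x_1):(x_1)]\in\LT^\T((x_1))$ as the witness, for which I need: (a)~$\mathbf{F}((x_1))=\{\top,\bot\}$, and (b)~$[R_1(x_1):(x_1)]$ is $\T$-equivalent to neither $\top$ nor $\bot$. Fact (a) I would obtain by induction on the formula: in any nonempty model the truth value of a relation-free formula is independent of the valuation of its free variables (since the only atomic subformulas are $\top,\bot$) and reduces to $\top$ or $\bot$; in a nonempty context the empty model contributes only vacuously, so by G\"odel's completeness theorem every relation-free formula over $(x_1)$ is $\T$-equivalent to $\top$ or to $\bot$. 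For (b) I would use the word-based description of models of $\T$ preceding the lemma: the model with alphabet $\{a\}$ and language $\{a^n:n\in\N\}$ satisfies $R_1(a)$, ruling out $R_1\equiv\bot$, while the model with alphabet $\{a,b\}$ and the same language falsifies $R_1(b)$, ruling out $R_1\equiv\top$; both obviously satisfy every axiom $\alpha_n$.

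The one real technical point is fact (a), and it goes through cleanly thanks to two accidental features of the language: no function symbols (so reindexings are just renamings) and no equality (so deleting the $R_n$ leaves no atomic formulas at all). If either were present, $\mathbf{F}$ would be too coarse a bound to still keep the $\P_n$'s away from $R_1$, and one would have to engineer a different bounding subfunctor.
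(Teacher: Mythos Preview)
Your proof is correct and follows essentially the same route as the paper. The paper computes the closure $\mathbf{Q}$ of $\mathbf{T}$ under Boolean operations, reindexings and quantifiers explicitly as $\{\top,\bot,\exists x\,\top,\lnot\exists x\,\top\}$ in each fiber, and then exhibits models to show $R_0\notin\mathbf{Q}$; your $\mathbf{F}$ of relation-free formulas is in fact exactly the paper's $\mathbf{Q}$ (the closure of $\{\top,\bot\}$ under $\S\cup\B\cup\Q$ is precisely the set of $\T$-classes of formulas with no occurrence of any $R_n$), but you bypass the explicit description by arguing syntactically that relation-free formulas are closed under all the operations. Your choice of witness $R_1(x_1)$ in the nonempty context $(x_1)$, where $\exists x\,\top$ collapses to $\top$, lets you reduce $\mathbf{F}((x_1))$ to $\{\top,\bot\}$ and avoids the case split on $k=0$ versus $k\geq 1$ that the paper carries out; this is a minor streamlining, not a different idea.
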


\begin{proof}
    We prove that $\mathbf{T}$ does not generate $\LT^\T$, i.e.\ that the closure $\mathbf{Q}$ of the Boolean doctrine $\mathbf{T}$ under quantifiers is not $\LT^\T$.
    It is not difficult to prove that $\mathbf{Q}$ is $\{\top, \bot, \exists x\, \top, \lnot \exists x\, \top\}$ in each fiber (with appropriate dummy variables, and with some of these elements possibly coinciding). 
    To prove $\mathbf{Q} \neq \LT^\T$, for each $\alpha$ in $\{\top, \bot, \exists x\, \top, \lnot \exists x\, \top\}$ in context $\{x_1, \dots, x_k\}$ (to be thought of as dummy variables) we exhibit a model of $\T$ and an interpretation $\rho$ of the variables $\{x_1, \dots, x_k\}$ that satisfies $\alpha$ but not $R_0$, or vice versa.
    
    If $k = 0$, consider (i) the empty model with each $R_n(x_1, \dots, x_n)$ false, and (ii) a one-element model with each $R_n(x_1, \dots, x_n)$ true.
    These two models can be thought of as (i) the pair $(A,\mathcal{W})$ with the alphabet $A=\varnothing$ and the language $\mathcal{W}=\varnothing$ and (ii) the pair $(A,\mathcal{W})$ with the alphabet $A=\{a\}$ and the language $\mathcal{W} = \{a \cdots a \text{ ($n$ times)} \mid n \in \N\}$.
    Then $R_0$ is neither $\top$ nor $\exists x\, \top$ by (i), and is neither $\bot$ nor $\lnot \exists x \, \top$ by (ii).
    
    If $k \geq 1$, consider (i) a model with exactly an element and each $R_n(x_1, \dots, x_n)$ true (and the only possible interpretation of the variables $x_1, \dots, x_k$), and (ii) a model with exactly an element and every $R_n(x_1, \dots, x_n)$ false.
    These two models can be thought of as (i) the pair $(A,\mathcal{W})$ with the alphabet $A=\{a\}$ and the language $\mathcal{W} = \{a \cdots a \text{ ($n$ times)} \mid n \in \N\}$ and (ii) the pair $(A,\mathcal{W})$ with the alphabet $A=\{a\}$ and the language $\mathcal{W}=\varnothing$.
    Then $R_0$ is neither $\bot$ nor $\lnot \exists x\, \top$ by (i), and is neither $\top$ nor $\exists x\, \top$ by (ii).
    
    This proves that $R_0$ does not belong to $\mathbf{Q}$. Therefore, $\mathbf{Q} \neq \LT^\T$.
\end{proof}

We now define the sequence of quantifier-free fragments whose intersection will be claimed to be $\mathbf{T}$.
For each $n \in \N$, let $\P_0^n \colon \ctx\op \to \BA$ be the Boolean subfunctor of $\LT^\T$ ``generated'' by $\{R_m \mid m \geq n\}$, i.e.\ the closure of $\{R_m \mid m \geq n\}$ under Boolean operations and reindexings.
Concretely, for all $k \in \N$, $\P_0^n(x_1, \dots, x_k)$ consists of all $\T$-equivalence classes of Boolean combinations of formulas of the form $R_{m}(x_{f(1)}, \dots, x_{f(m)})$ for some $m \in \N$ with $m \geq n$ and some function $f \colon \{1, \dots, m\} \to \{1, \dots, k\}$.

\begin{fact}
    For each $n \in \N$, $\P_0^n$ is a quantifier-free fragment of $\LT^\T$.
\end{fact}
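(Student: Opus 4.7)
The plan is to check the two conditions defining a quantifier-free fragment (\cref{d:whole-and-first-layer}). The subfunctor condition \eqref{i:qff1} is immediate from the very definition of $\P_0^n$ as the closure of $\{R_m\mid m\geq n\}$ under Boolean operations and reindexings: each fiber is a Boolean subalgebra by construction, and reindexings restrict to $\P_0^n$ by construction. So the content of the proof lies entirely in establishing the generation condition \eqref{i:qff3}.

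The crucial point for generation is that every relation symbol $R_m$ with $m<n$ can be recovered from $R_n$ by quantification, thanks to the axioms of $\T$. Iterating $\alpha_m,\alpha_{m+1},\dots,\alpha_{n-1}$, I obtain the derivable equivalence
\[
    R_m(x_1,\dots,x_m) \,\dashv\vdash_\T\, \exists x_{m+1}\dots\exists x_n\,R_n(x_1,\dots,x_n)
\]
for each $m<n$. In the doctrinal setting this iterated existential becomes a single quantifier block $\ex{(x_{m+1},\dots,x_n)}{(x_1,\dots,x_m)}$ applied to $[R_n(x_1,\dots,x_n)]_\T$, which is an element of $\P_0^n(x_1,\dots,x_n)$. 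Since $\exists=\neg\forall\neg$ and $\P_0^n$ is closed under negation, the result sits in $\P_1^n(x_1,\dots,x_m)$.

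Having settled the atomic case, I will conclude by a short induction on formula complexity. Every atomic formula of the language is a reindexing of some $R_m(x_1,\dots,x_m)$, so by stability of the layers under reindexing, together with the previous paragraph, every atomic formula belongs to $\P_1^n$; Boolean connectives keep us within the same layer, and each universal (or, equivalently, existential) quantifier bumps us up by at most one layer. Hence every element of $\LT^\T(\vec x)$ lies in some $\P_k^n(\vec x)$, giving $\LT^\T(\vec x)=\bigcup_{k\in\N}\P_k^n(\vec x)$. There is no real obstacle in the argument; the one minor point to watch is to rename the bound variables $x_{m+1},\dots,x_n$ in the iterated existential to be fresh with respect to any ambient context along which we are reindexing, which is handled by the usual renaming-of-bound-variables convention.
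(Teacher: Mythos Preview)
Your proposal is correct and follows the same approach as the paper's proof, which is simply more terse: the paper just notes that $\P_0^n$ is closed under Boolean operations and reindexings, and that each $R_m$ lies in the first-order Boolean doctrine generated by $\P_0^n$ (belonging to $\P_0^n$ itself if $m\geq n$, and being first-order definable from $R_n$ if $m<n$). You have spelled out the ``first-order definable from $R_n$'' step and the subsequent induction on formula complexity in more detail, but the idea is identical.
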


\begin{proof}
    $\P_0^n$ is clearly closed under Boolean combinations and reindexings.
    Moreover, each $R_m$ is in the first-order Boolean doctrine generated by $\P_0^n$, because if $m \geq n$ then it belongs to $\P_0^n$, and if $m < n$ then it is first-order definable from $R_{n}$.
\end{proof}

Now, the main effort is proving that the intersection of all $\P_0^n$ (for $n \in \N$) is $\mathbf{T}$.
Here is the key lemma.

\begin{lemma} \label{l:prefix-lemma}
    Let $k \in \N$ (and $x_1, \dots, x_k$ be different variables), let $\bar{i}, \bar{j}, m_1,\dots,m_{\bar{i}},n_1,\dots,n_{\bar{j}}\in \N$, for every $i \in \{1, \dots, \bar{i}\}$ let $f_i \colon \{1, \dots, m_{{i}}\} \to \{1, \dots, k\}$ be a function, and for every $j \in \{1, \dots, \bar{j}\}$ let $g_j \colon \{1, \dots, n_{{j}}\} \to \{1, \dots, k\}$ be a function.
    The inequality
    \[
    \bigwedge_{i = 1}^{\bar{i}} R_{m_i}(x_{f_{i}(1)}, \dots, x_{f_{i}( m_i)})
    \leq 
    \bigvee_{j = 1}^{\bar{j}} R_{n_j}(x_{g_{j}(1)}, \dots, x_{g_{j}(n_j)}).
    \]
    in $\LT^\T(\{x_1, \dots, x_k\})$ holds if and only if there is $i_0 \in \{1, \dots, \bar{i}\}$ and $j_0 \in \{1, \dots, \bar{j}\}$ such that the tuple $(x_{g_{j_0}(1)}, \dots, x_{g_{j_0}( n_{j_0})})$ is a prefix of the tuple $(x_{f_{i_0}(1)}, \dots, x_{f_{i_0}(m_{i_0})})$.
\end{lemma}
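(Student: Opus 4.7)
The approach is semantic: by Gödel's completeness theorem, an inequality $\alpha \leq \beta$ in $\LT^\T(\vec x)$ holds if and only if $\alpha \to \beta$ is satisfied in every model of $\T$ under every assignment of $\vec x$. I will use the convenient representation of models of $\T$ as pairs $(A, \mathcal{W})$ where $A$ is a set (the ``alphabet'') and $\mathcal{W}$ is a prefix-closed, extendable set of finite words over $A$, as spelled out in the paragraph preceding the lemma: the word $a_1 \cdots a_n$ lies in $\mathcal{W}$ iff $(a_1, \dots, a_n)$ belongs to the interpretation of $R_n$. Prefix-closure and extendability of $\mathcal{W}$ correspond exactly to the two implications packaged in the axiom $\alpha_n$.

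For the direction $(\Leftarrow)$, suppose $(x_{g_{j_0}(1)}, \dots, x_{g_{j_0}(n_{j_0})})$ is a prefix of $(x_{f_{i_0}(1)}, \dots, x_{f_{i_0}(m_{i_0})})$. I will first show syntactically that for all $n \leq m$ one has
\[
R_m(y_1, \dots, y_m) \vdash_\T R_n(y_1, \dots, y_n),
\]
by induction on $m-n$: the base case $m = n$ is trivial, and the inductive step follows because existential introduction gives $R_{m}(y_1, \dots, y_m) \vdash_\T \exists y_m\, R_m(y_1, \dots, y_m)$, which by $\alpha_{m-1}$ is equivalent to $R_{m-1}(y_1, \dots, y_{m-1})$. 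Substituting $y_\ell := x_{f_{i_0}(\ell)}$ into the case $n = n_{j_0}$, $m = m_{i_0}$ gives $R_{m_{i_0}}(\vec x_{f_{i_0}}) \leq R_{n_{j_0}}(\vec x_{g_{j_0}})$, which implies the required inequality.

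For the direction $(\Rightarrow)$, I prove the contrapositive by exhibiting a countermodel whenever no prefix relation holds between any pair $(\vec x_{g_{j_0}}, \vec x_{f_{i_0}})$. Let $*$ be a fresh symbol and set $A := \{x_1, \dots, x_k, *\}$. Let $\mathcal{W}_0 := \{(x_{f_i(1)}, \dots, x_{f_i(m_i)}) \mid 1 \leq i \leq \bar i\}$, let $\mathcal{W}_1$ be its prefix closure, and set
\[
\mathcal{W} := \{w \cdot *^s \mid w \in \mathcal{W}_1,\ s \geq 0\}.
\]
This $\mathcal{W}$ is prefix-closed (prefixes of $w \cdot *^s$ are either of the same form or prefixes of $w$) and extendable (append an extra $*$). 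Interpreting the variable $x_\ell$ as the letter $x_\ell \in A$, each $R_{m_i}(\vec x_{f_i})$ holds by construction, while each $R_{n_j}(\vec x_{g_j})$ would require the $*$-free word $(x_{g_j(1)}, \dots, x_{g_j(n_j)})$ to lie in $\mathcal{W}_1$, i.e.\ to be a prefix of some $(x_{f_i(1)}, \dots, x_{f_i(m_i)})$, contradicting the assumption. A separate easy check handles the corner cases $\bar i = 0$ or $k = 0$ (where, for instance, the one-element model with every $R_n$ empty works). The main technical point is exactly the insertion of the fresh letter $*$, which is what makes the extendability requirement compatible with keeping every $R_{n_j}(\vec x_{g_j})$ false.
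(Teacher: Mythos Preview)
Your proof is correct and essentially identical to the paper's: both directions match, and your countermodel is the same construction (fresh letter appended to make the language extendable), with the cosmetic difference that you allow appending $*$'s to \emph{any} prefix in $\mathcal{W}_1$ while the paper appends $c$'s only to the full words $x_{f_i(1)}\cdots x_{f_i(m_i)}$. Your added detail on the $(\Leftarrow)$ direction and the explicit mention of the corner cases $\bar i = 0$, $k=0$ are fine elaborations of what the paper calls ``straightforward''.
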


\begin{proof}
    The right-to-left implication is straightforward.

    We prove the contrapositive of the left-to-right implication.
    Suppose that for no $i$ and $j$ the tuple $(x_{g_{j}(1)}, \dots, x_{g_{j}( n_{j})})$ is a prefix of $(x_{f_{i}(1)}, \dots, x_{f_{i}(m_{i})})$.
    We shall prove
    \[
    \bigwedge_{i = 1}^{\bar{i}} R_{m_i}(x_{f_{i}(1)}, \dots, x_{f_{i}( m_i)})
    \nleq 
    \bigvee_{j = 1}^{\bar{j}} R_{n_j}(x_{g_{j}(1)}, \dots, x_{g_{j}(n_j)}).
    \]
    To do so, we exhibit a model that satisfies the formula on the left but not the one on the right.
    Consider the alphabet $A = \{x_1, \dots, x_k, c\}$, where $c$ is an element different from $x_1, \dots, x_k$.
    Let $\mathcal{W}$ be the language whose words are any prefix of $x_{f_{i}(1)} \cdots x_{f_{i}(m_{i})}$ and any word of the form 
    \[
    x_{f_{i}(1)} \cdots x_{f_{i}(m_{i})} \underbrace{c\cdots c}_{l \text{ times}}
    \]
    for any $i$ and $l \in \N$.
    We now consider the pair $(A, \mathcal{W})$ as a model of $\T$, together with the variable assignment $\rho$ mapping each free variable $x_i$ to the element $x_i \in A$.
    Since $x_{f_{i}(1)} \cdots x_{f_{i}(m_{i})}$ belongs to $\mathcal{W}$ for each $i$, 
    \[
    (A, \mathcal{W}, \rho) \vDash \bigwedge_{i = 1}^{\bar{i}} R_{m_i}(x_{f_{i}(1)}, \dots, x_{f_{i}( m_i)}).
    \]
    Since for no $i$ and $j$ the tuple $(x_{g_{j}(1)}, \dots, x_{g_{j}( n_{j})})$ is a prefix of $(x_{f_{i}(1)}, \dots, x_{f_{i}(m_{i})})$, and since $c$ differs from all $x_i$'s, none of the words $x_{g_{j}(1)} \cdots x_{g_{j}( n_{j})}$ belongs to $\mathcal{W}$, and so $(A, \mathcal{W}, \rho) \not\vDash \bigvee_{j = 1}^{\bar{j}} R_{n_j}(x_{g_{j}(1)}, \dots, x_{g_{j}(n_j)})$.
\end{proof}

\begin{lemma} \label{l:intersection-is-top-bot}
    For every $k \in \N$ we have $\bigcap_{n \in \N} \P_0^n(\{x_1, \dots, x_k\}) = \{ \top, \bot\}$.
\end{lemma}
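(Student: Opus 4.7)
The inclusion $\{\top, \bot\} \subseteq \bigcap_{n\in\N} \P_0^n(\{x_1, \dots, x_k\})$ is immediate. For the reverse inclusion, fix $\alpha$ in the intersection; the plan is to show that $\alpha$ takes the same truth value in every model of $\T$ under every assignment, whence $\alpha \equiv_\T \top$ or $\alpha \equiv_\T \bot$.

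Let $(M_*, \rho_*)$ be an arbitrary model of $\T$ with $M_* = (A_*, \mathcal{W}_*)$ and $\rho_* \colon \{x_1, \dots, x_k\} \to A_*$, and compare it with the degenerate model $(M_0, \rho_0)$ defined by $A_0 = \{a\}$, $\mathcal{W}_0 = \varnothing$, and $\rho_0(x_i) = a$, in which every atomic formula is false. I build a ``truncation'' model $(\widetilde M, \widetilde \rho)$ bridging the two. Using $\alpha \in \P_0^1$, pick $\beta_1 \equiv_\T \alpha$ that is a Boolean combination of atomic formulas $R_m(x_{f(1)},\dots,x_{f(m)})$ with $m \geq 1$, and let $M$ be the largest $m$ appearing in $\beta_1$. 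Take $\widetilde A = \{x_1,\dots,x_k,c\}$ with $c$ fresh, $\widetilde \rho$ the inclusion, and
\[
    \widetilde{\mathcal{W}} := \bigl\{\,wc^l \,:\, l \geq 0,\ w \in \{x_1,\dots,x_k\}^{\leq M},\ \rho_*(w) \in \mathcal{W}_*\,\bigr\}.
\]
Prefix-closure of $\widetilde{\mathcal{W}}$ follows from prefix-closure of $\mathcal{W}_*$ together with the observation that a prefix of $wc^l$ is either a prefix of $w$ or of the form $wc^{l'}$ with $l'<l$, and extendability holds by appending one more $c$; thus $(\widetilde M, \widetilde \rho)$ is a model of $\T$.

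By construction, every atomic $R_m(w)$ with $|w| = m \leq M$ takes the same value in $(\widetilde M, \widetilde \rho)$ as in $(M_*, \rho_*)$, so $\beta_1$ -- and hence $\alpha$ -- evaluates identically in both. Next, use $\alpha \in \P_0^{M+1}$ to pick $\beta_{M+1} \equiv_\T \alpha$ in generators $R_m$ with $m \geq M + 1$. In $(\widetilde M, \widetilde \rho)$ no word of length strictly greater than $M$ over $\{x_1,\dots,x_k\}$ belongs to $\widetilde{\mathcal{W}}$ (any such word would have to be $wc^l$ with $l \geq 1$, contradicting the absence of $c$), and in $(M_0, \rho_0)$ the language is empty, so every atomic subformula of $\beta_{M+1}$ is false in both; hence $\beta_{M+1}$, and thus $\alpha$, evaluates identically in $(\widetilde M, \widetilde \rho)$ and $(M_0, \rho_0)$. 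Chaining the two equivalences yields $(M_*, \rho_*) \vDash \alpha \iff (M_0, \rho_0) \vDash \alpha$, and since $(M_*, \rho_*)$ was arbitrary, the claim follows. The only non-trivial check is that the truncation $(\widetilde M, \widetilde \rho)$ is actually a model of $\T$ (prefix-closure and extendability of $\widetilde{\mathcal{W}}$); everything else is a direct substitution of atomic values into the two representatives $\beta_1$ and $\beta_{M+1}$, exploiting the freedom built into $\alpha \in \bigcap_n \P_0^n$ to pick representatives of $\alpha$ with arbitrarily large lower bound on the arities of the atomic formulas.
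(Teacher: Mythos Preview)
Your proof is correct and takes a genuinely different route from the paper's. The paper works syntactically: it first establishes a combinatorial ``prefix lemma'' characterizing when a conjunction of atomic formulas entails a disjunction of atomic formulas in $\LT^\T$, and then, assuming $\varphi \notin \{\top,\bot\}$, plays off a DNF representative of $\varphi$ at level $0$ against a CNF representative at a sufficiently high level, reaching a contradiction through a four-way case split. Your argument is semantic: you fix a reference model $(M_0,\rho_0)$ with empty language and show that every $(M_*,\rho_*)$ agrees with it on $\alpha$ by interpolating a truncation model that matches $(M_*,\rho_*)$ on all atoms of arity $\leq M$ (so the representative $\beta_1$ transfers) while having no words of length $>M$ over the variable alphabet (so the representative $\beta_{M+1}$ transfers to $(M_0,\rho_0)$). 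Your approach is shorter and sidesteps both the auxiliary lemma and the case analysis; the paper's approach, by contrast, isolates a reusable structural fact about $\LT^\T$ and needs only soundness rather than completeness. Two small points worth tidying: dispatch separately the degenerate case where $\beta_1$ has no atomic subformulas (then $M$ is undefined but $\alpha \equiv_\T \beta_1 \in \{\top,\bot\}$ immediately; this also covers $k=0$), and make explicit that the final step from ``same truth value in all models under all assignments'' to $\alpha \equiv_\T \top$ or $\alpha \equiv_\T \bot$ invokes completeness.
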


\begin{proof}
    The right-to-left inclusion is immediate.

    Let us prove the left-to-right inclusion.
    Let $\varphi(x_1, \dots, x_k) \in \bigcap_{n \in \N} \P_0^n(\{x_1, \dots, x_k\})$.
    We shall prove $\varphi(x_1, \dots, x_k) \in \{\top, \bot\}$.
    Suppose, by way of contradiction, $\varphi(x_1, \dots, x_k) \notin \{\top, \bot\}$, and let us look for a contradiction.
    
    Since $\varphi(x_1, \dots, x_k)$ belongs to $\P_0^0$, $\varphi(x_1, \dots, x_k)$ is a Boolean combination of elements of the form $R_{m}(x_{f(1)}, \dots, x_{f(m)})$ for some $m \in \N$ and some function $f \colon \{1, \dots, m\} \to \{1, \dots, k\}$. Therefore, it can be written as a finite disjunction of finite conjunctions of elements of the form above and their negations.
    Since $\varphi(x_1, \dots, x_k) \neq \bot$, one of the disjuncts has to be different from $\bot$.
    Therefore, we have
    \begin{equation} \label{eq:left}
        \bot \neq \mleft(\bigwedge_{i = 1}^{\bar{i}} R_{m_i}(x_{f_i(1)}, \dots, x_{f_i(m_i)})\mright) \land \mleft(\bigwedge_{j = 1}^{\bar{j}} \lnot R_{n_j}(x_{g_j(1)}, \dots, x_{g_j(n_j)})\mright) \leq \varphi(x_1, \dots, x_k),
    \end{equation}
    for some $\bar{i}, \bar{j}, (m_i)_i, (n_j)_j, (f_i \colon \{1, \dots, m_i\} \to \{1, \dots, k\})_i, (g_j \colon \{1, \dots, n_j\} \to \{1, \dots, k\})_j$.

    We next claim that we can remove the positive occurrences from \eqref{eq:left}; this is the point where we make full use of the hypothesis that $\varphi$ belongs to \emph{every} $\P_0^n$.
    
    \begin{claim} \label{cl:positive-removed}
        $\bigwedge_{j = 1}^{\bar{j}} \lnot R_{n_j}(x_{g_j(1)}, \dots, x_{g_j(n_j)}) \leq \varphi(x_1, \dots, x_k)$.
    \end{claim}

    \begin{proof}[Proof of claim]
        Let $\bar{m}$ be any natural number strictly greater than $m_1, \dots, m_{\bar{i}}$, $n_1, \dots, n_{\bar{j}}$.
        Then 
        \[
        \varphi(x_1, \dots, x_k) \in \bigcap_{n \in \N} \P_0^n(\{x_1, \dots, x_k\}) \subseteq \P_0^{\bar{m}}(\{x_1, \dots, x_k\}).
        \]
        Therefore, $\varphi(x_1, \dots, x_k)$ is a Boolean combination of elements of the form $R_{m}(x_{f(1)}, \dots, x_{f(m)})$ for some $m \geq \bar{m}$ and some function $f \colon \{1, \dots, m\} \to \{1, \dots, k\}$.
        Thus, $\varphi(x_1, \dots, x_k)$ is a finite conjunction of finite disjunctions of elements of the form above and their negations.
        Moreover, we can assume each conjunct not to be $\top$.
        It is enough to prove \cref{cl:positive-removed} for each conjunct of $\varphi$ in place of $\varphi$.
        Let
        \[
            \mleft(\bigvee_{l = 1}^{\bar{l}} R_{p_l}(x_{r_l(1)}, \dots, x_{r_l(p_l)})\mright) \lor \mleft(\bigvee_{h = 1}^{\bar{h}} \lnot R_{q_h}(x_{s_h(1)}, \dots, x_{s_h(q_h)})\mright)
        \]
        be one such conjunct.

        We then have
        \begin{align*}
            &\mleft(\bigwedge_{i = 1}^{\bar{i}} R_{m_i}(x_{f_i(1)}, \dots, x_{f_i(m_i)})\mright) \land \mleft(\bigwedge_{j = 1}^{\bar{j}} \lnot R_{n_j}(x_{g_j(1)}, \dots, x_{g_j(n_j)})\mright)\\
            & \leq \varphi(x_1, \dots, x_k)\\
            & \leq \mleft(\bigvee_{l = 1}^{\bar{l}} R_{p_l}(x_{r_l(1)}, \dots, x_{r_l(p_l)})\mright) \lor \mleft(\bigvee_{h = 1}^{\bar{h}} \lnot R_{q_h}(x_{s_h(1)}, \dots, x_{s_h(q_h)})\mright).
        \end{align*}
        Therefore,
        \begin{align*}
            &\mleft(\bigwedge_{i = 1}^{\bar{i}} R_{m_i}(x_{f_i(1)}, \dots, x_{f_i(m_i)})\mright) 
            \land 
            \mleft(\bigwedge_{h = 1}^{\bar{h}} R_{q_h}(x_{s_h(1)}, \dots, x_{s_h(q_h)})\mright)\\
            &\leq 
            \mleft(\bigvee_{l = 1}^{\bar{l}} R_{p_l}(x_{r_l(1)}, \dots, x_{r_l(p_l)})\mright) 
            \lor 
            \mleft(\bigvee_{j = 1}^{\bar{j}} R_{n_j}(x_{g_j(1)}, \dots, x_{g_j(n_j)})\mright).
        \end{align*}
        Therefore, by \cref{l:prefix-lemma}, one of the following cases applies.
        \begin{enumerate}
            \item \label{i:impossible-by-length}
            There are $l_0 \in \{1, \dots, \bar{l}\}$ and $i_0 \in \{1, \dots, \bar{i}\}$ such that the tuple $(x_{r_{l_0}(1)}, \dots, x_{r_{l_0}(p_{l_0})})$ is a prefix of $(x_{f_{i_0}(1)}, \dots, x_{f_{i_0}(m_{i_0})})$.

            \item \label{i:impossible-by-non-top}
            There are $l_0 \in \{1, \dots, \bar{l}\}$ and $h_0 \in \{1, \dots, \bar{h}\}$ such that the tuple $(x_{r_{l_0}(1)}, \dots, x_{r_{l_0}(p_{l_0})})$ is a prefix of $(x_{s_{h_0}(1)}, \dots, x_{s_{h_0}(q_{h_0})})$.

            \item \label{i:impossible-by-non-bot}
            There are $j_0 \in \{1, \dots, \bar{j}\}$ and $i_0 \in \{1, \dots, \bar{i}\}$ such that the tuple $(x_{g_{j_0}(1)}, \dots, x_{g_{j_0}(n_{j_0})})$ is a prefix of $(x_{f_{i_0}(1)}, \dots, x_{f_{i_0}(m_{i_0})})$.

            \item \label{i:possible}
            There are $j_0 \in \{1, \dots, \bar{j}\}$ and $h_0 \in \{1, \dots, \bar{h}\}$ such that the tuple $(x_{g_{j_0}(1)}, \dots, x_{g_{j_0}(n_{j_0})})$ is a prefix of $(x_{s_{h_0}(1)}, \dots, x_{s_{h_0}(q_{h_0})})$.
        \end{enumerate}
        
        The case \eqref{i:impossible-by-length} is not possible since $p_{l_0} \geq \bar{m} > m_{i_0}$. (This is where we use our hypothesis on $\bar{m}$.)

        \eqref{i:impossible-by-non-top} is not possible since the conjunct $\mleft(\bigvee_{l = 1}^{\bar{l}} R_{p_l}(x_{r_l(1)}, \dots, x_{r_l(p_l)})\mright) \lor \mleft(\bigvee_{h = 1}^{\bar{h}} \lnot R_{q_h}(x_{s_h(1)}, \dots, x_{s_h(q_h)})\mright)$ of $\varphi$ is not $\top$, by hypothesis.

        \eqref{i:impossible-by-non-bot} is not possible as $\mleft(\bigwedge_{i = 1}^{\bar{i}} R_{m_i}(x_{f_i(1)}, \dots, x_{f_i(m_i)})\mright) \land \mleft(\bigwedge_{j = 1}^{\bar{j}} 
        \lnot R_{n_j}(x_{g_j(1)}, \dots, x_{g_j(n_j)})\mright)\neq \bot$ (\eqref{eq:left}).

        Therefore, \eqref{i:possible} holds.
        This proves our claim.
    \end{proof}

    Analogously, there are $\bar{l}$, $(p_l)_l$ and $(r_l \colon \{1, \dots, p_l\} \to \{1, \dots, k\})_l$ such that
    \begin{equation} \label{eq:dual-of-positive-removed}
        \varphi(x_1, \dots, x_k) \leq \bigvee_{l = 1}^{\bar{l}} R_{p_l}(x_{r_l(1)}, \dots, x_{r_l(p_l)}).
    \end{equation}
    By \cref{cl:positive-removed} and \eqref{eq:dual-of-positive-removed}, we have
    $
        \bigwedge_{j = 1}^{\bar{j}} \lnot R_{n_j}(x_{g_j(1)}, \dots, x_{g_j(n_j)}) \leq \bigvee_{l = 1}^{\bar{l}} R_{p_l}(x_{r_l(1)}, \dots, x_{r_l(p_l)})
    $,
    and thus $\top \leq \mleft(\bigvee_{l = 1}^{\bar{l}} R_{p_l}(x_{r_l(1)}, \dots, x_{r_l(p_l)})\mright) \lor \mleft(\bigvee_{j = 1}^{\bar{j}} R_{n_j}(x_{g_j(1)}, \dots, x_{g_j(n_j)}) \mright)$,
    which contradicts \cref{l:prefix-lemma}.
\end{proof}

\begin{proposition}\label{p:no-least-qff}
    The first-order Boolean doctrine $\LT^\T$ has no least quantifier-free fragment.
\end{proposition}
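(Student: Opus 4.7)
The plan is to argue by contradiction, leveraging the three ingredients already assembled: (i) each $\P_0^n$ is a quantifier-free fragment of $\LT^\T$, (ii) the fiberwise intersection $\bigcap_{n\in\N}\P_0^n$ equals the minimal Boolean subfunctor $\mathbf{T}$ (\cref{l:intersection-is-top-bot}), and (iii) $\mathbf{T}$ itself is not a quantifier-free fragment (\cref{f:does-not-generate}).

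Suppose, toward a contradiction, that $\LT^\T$ admits a least quantifier-free fragment $\P_0^{\min}$. By minimality, $\P_0^{\min}$ is contained fiberwise in every quantifier-free fragment of $\LT^\T$; in particular, for every $k,n\in\N$,
\[
\P_0^{\min}(\{x_1,\dots,x_k\}) \subseteq \P_0^n(\{x_1,\dots,x_k\}).
\]
Intersecting over all $n$ and applying \cref{l:intersection-is-top-bot} gives
\[
\P_0^{\min}(\{x_1,\dots,x_k\}) \subseteq \bigcap_{n\in\N}\P_0^n(\{x_1,\dots,x_k\}) = \{\top,\bot\}.
\]
Since $\P_0^{\min}$ is a Boolean subfunctor of $\LT^\T$, every fiber contains both $\top$ and $\bot$, so in fact $\P_0^{\min}(\{x_1,\dots,x_k\}) = \{\top,\bot\}$ for every $k$, i.e.\ $\P_0^{\min} = \mathbf{T}$. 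But this contradicts \cref{f:does-not-generate}, which asserts that $\mathbf{T}$ is not a quantifier-free fragment of $\LT^\T$.

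The main obstacle has already been overcome in \cref{l:intersection-is-top-bot,f:does-not-generate}; the remaining argument is a direct lattice-theoretic squeeze between a lower bound that is too small to generate $\LT^\T$ and an upper bound forced by the decreasing family $(\P_0^n)_{n\in\N}$.
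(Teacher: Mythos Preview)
Your proof is correct and follows essentially the same approach as the paper: assume a least quantifier-free fragment exists, use \cref{l:intersection-is-top-bot} to force it to coincide with $\mathbf{T}$, and then invoke \cref{f:does-not-generate} for the contradiction. The paper's version is more terse but the logic is identical.
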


\begin{proof}
    Suppose a least quantifier-free fragment $\mathbf{Q}$ exists.
    By \cref{l:intersection-is-top-bot}, $\mathbf{T}$ is an intersection of quantifier-free fragments, and so $\mathbf{Q} \subseteq \mathbf{T}$, which implies $\mathbf{Q} = \mathbf{T}$, contradicting \cref{f:does-not-generate}.
\end{proof}

\section*{Acknowledgments}
We would like to thank the logic group at the University of Salerno for supporting a sequence of seminars in logic that included a workshop on doctrines, which allowed us to advance this collaboration.
Significant progress in our work occurred during a visit at the University of Luxembourg, for which we are grateful to Bruno Teheux.
Moreover, we would like to express our heartfelt thanks to Maria Emilia Maietti and Luca Reggio for their invaluable feedback and suggestions.
We also thank the participants of a seminar at the University of Padua for their engaging questions and comments, and Jérémie Marquès for drawing our attention to the fact that first-order Boolean doctrines form a variety, rather than merely a quasi-variety—a point that was independently emphasized by the referee. This observation has been incorporated into \cref{r:many-sorted}.
We thank Silvio Ghilardi, Vincenzo Marra and Davide Trotta for suggestions on the literature. 
The first author also thanks Achim Jung for various discussions on the topic. 
Finally, we express our deepest gratitude to the referee for their careful and thorough reading of our manuscript and their many insightful suggestions, which have significantly improved the quality of this paper.
For example, the elegant and constructive proof of \cref{t:bd-embeds-fo}, which replaced our earlier approach based on models, is due to the referee.
We feel fortunate to have received a report of such high quality.

\textit{Funding.}
The first author's research was supported by the UK Research and Innovation (UKRI) under the UK government’s Horizon Europe funding guarantee (Project ``DCPOS'', grant number EP/Y015029/1) and by the Italian Ministry of University and Research through the PRIN project n.~20173WKCM5 \emph{Theory and applications of resource sensitive logics}. The second author's research was supported by the Luxembourg National Research Fund (FNR), ``COMOC'' Project (ref. C21/IS/16101289).

\bibliographystyle{plain}
\bibliography{Biblio}
\end{document}